\newtheorem{preproof}{{\bf \indent Proof.}}
\newenvironment{proof}[1]{\begin{preproof}{\rm
               #1}\hfill{$\Box$}}{\end{preproof}}
\newtheorem{prop}{\bf\indent Proposition}[section]
\newtheorem{defn}[prop]{\bf\indent Definition}
\newtheorem{cor}[prop]{\bf\indent Corollary}
\newtheorem{example}[prop]{\bf\indent Example}
\newtheorem{thm}[prop]{\bf\indent Theorem}
\title{\bf  \large Cyclic-Uniform Uniserial Modules and Rings\thanks
{{\it Key Words}:  Cyclic-uniform uniserial module, Cyclic-uniform uniserial ring, Uniserial ring, Uniserial module, Principal ideal rings.} \thanks
{\indent{~~2010 {\it Mathematics Subject Classification}: 16D70, 16P40, 13H99.}}}
\author{{\normalsize  {\sc R. Nikandish${}^{\mathsf{a}}$\thanks{Corresponding author}, {\sc M.J. Nikmehr${}^{\mathsf{b}}$} and {\sc A. Yassine${}^{\mathsf{b}}$}  }
}\vspace{3mm}\\
{\footnotesize{}}\\
{\footnotesize{}}\\
{\footnotesize{${}^{\mathsf{a}}$\it Department of Mathematics, Jundi-Shapur University of Technology,}}\\
{\footnotesize{\rm P.O. BOX \rm{64615-334},
Dezful, Iran}}\\
{\footnotesize{ $\mathsf{r.nikandish@ipm.ir}$}}\\
{\footnotesize{${}^{\mathsf{b}}$\it Faculty of Mathematics, K.N. Toosi
University of Technology, }}\\
{\footnotesize{\rm P.O. BOX \rm{16315-1618}, Tehran, Iran}}\\
{\footnotesize{ $\mathsf{nikmehr@kntu.ac.ir}$}}\quad\quad
{\footnotesize{$\mathsf{yassine\_ali@email.kntu.ac.ir}$}}\\
{\footnotesize{$\mathsf{}$ }}}
\date{}
\begin{document}

\maketitle
\begin{abstract}
{
An $R$-module $M$ is called virtually uniserial if for every finitely generated submodule $0 \neq K \subseteq M$, $K/$Rad$(K)$ is virtually simple. In this paper, we generalize virtually uniserial modules by dropping the virtually simple condition and replacing it by the cyclic uniform condition.  An $R$-module $M$ is called cyclic-uniform uniserial if  $K/$Rad$(K)$ is cyclic and uniform, for every finitely generated submodule $0 \neq K \subseteq M$. Also, $M$ is said to be cyclic-uniform serial if it is a direct sum of cyclic-uniform uniserial modules. Several properties of cyclic-uniform (uni)serial modules and rings are given.  Moreover, the structure of Noetherian left cyclic-uniform uniserial rings are characterized. Finally, we study rings $R$ have the property that every finitely generated $R$-module is cyclic-uniform serial.

}
\end{abstract}
\begin{center}{\section{Introduction
}}\end{center}
\par
We begin with some notions and definitions from ring theory and module theory. We assume throughout this paper that all rings are associative rings with identity and all modules are unitary left modules. Let $R$ be a ring. The set of all maximal left (resp., two sided) ideals of $R$,  the set of integers and integers modulo $n$ are denoted by Max$_l(R)$ (resp., Max$(R)$),  $\mathbb{Z}$ and $\mathbb{Z}_n$, respectively. A ring $R$ is called \textit{local} if $R$ has a unique maximal left ideal. Let $M$ be an $R$-module. The intersection of all maximal submodules of $M$, denoted by Rad$(M)$,  is called the \textit{radical of} $M$. In particular, the radical of the $R$-module $_RR$, denoted by J$(R)$, is the Jacobson radical of the ring $R$, i.e., J$(R) =$ Rad$(_RR)$. A ring $R$ is called \textit{semilocal} if $R$/J($R$) is a semisimple Artinian ring.  Note that if a ring $R$ has a finite number of maximal left ideals, then $R$ is semilocal. Recall from \cite{Wisbauer} that an $R$-module $M$ is said to be \textit{uniserial} if its submodules are linearly ordered by inclusion. Also, $M$ is said to be \textit{serial} if it is a direct sum of uniserial modules. A \textit{left (resp., right) uniserial ring} is a ring which is uniserial as a left (resp., right) module. A \textit{left (resp., right) serial ring} is a ring which is serial as a left (resp., right) module. Also, a ring $R$ is called \textit{uniserial (resp., serial)} if it is both a left and a right uniserial (resp., serial) ring. A ring $R$ is called right \textit{(FGC) K\"othe} if every (finitely generated) right $R$-module is a direct sum of cyclic $R$-modules. A ring $R$ is an \textit{Artinian (resp., Noetherian)} ring if it is both left and right Artinian (resp., Noetherian). A ring $R$ is a \textit{principal ideal ring} if it is both left and right principal ideal ring. A submodule $N$ of a module $M$ is said to be \textit{essential}, if it has non-zero intersection with every non-zero submodule of $M$. A nonzero $R$-module $M$ is called \textit{uniform} if every nonzero submodule of $M$ is essential in $M$. The \textit{uniform dimension} of a module $M$, denoted u.dim($M$), is defined to be $n$ if there is an essential submodule $N\subseteq M$ such that $N$ is a direct sum of $n$ uniform submodules. If no
such essential submodule exists, then u.dim($M)  = \infty$. An $R$-module $M$ is called \textit{isonoetherian} if, for every ascending chain $M_1 \leq M_2 \leq \cdots$ of submodules of $M$, there exists an index $n \geq 1$ such that $M_n\cong M_i$ for every $i \geq n$. An $R$-module $M$ is called \textit{B\'ezout} if every finitely generated submodule of $M$ is cyclic. A left $R$-module $M$ which has a composition series is called a \textit{module of finite length}. The length of a composition series of $M$ is called the length of $M$ and denoted by \textit{length$(M)$}. A left Artinian ring $R$ is called \textit{of bounded representation type} if the length of each finitely generated indecomposable left $R$-module is bounded. A ring $R$ is said to be \textit{left pure-semisimple} if every left $R$-module is a direct sum of finitely generated $R$-modules.  A ring $R$ is called a \textit{torch ring} if $R$ is not local, has a unique minimal prime ideal $P$ and $P$ is a non-zero uniserial $R$-module, $R/P$ is an h-local domain and $R$ is a locally almost maximal B\'ezout ring. An $R$-module $M$ is said to be \textit{virtually simple} if $M \neq 0$ and $N\cong M$ for every nonzero submodule $N$ of $M$. For any undefined notation or terminology and all the basic results on rings and modules that are used in the sequel, we refer the reader to \cite{AndersonFuller,Faith,GoodearlWarfield,Lam,McConnell} and \cite{Wisbauer}. We assume throughout this paper that all modules are unitary left modules.

Determining the structure of rings for which all modules are direct sums of certain modules has a long history. For instance, a famous result due to K\"othe \cite{Kothe} states that ``over an Artinian principal ideal ring, every module is a direct sum of cyclic submodules". Cohen and Kaplansky \cite{CohenKaplansky} showed that the converse is true for a commutative ring $R$, i.e., ``for a commutative ring $R$, $R$ is an Artinian principal ideal ring if and only if every module is a direct sum of cyclic submodules". Later, Nakayama \cite{Nakayama} showed that ``for an Artinian serial ring $R$, every $R$-module is a direct sum of cyclic submodules, but the converse need not be true". After that, Asano showed that the converse is true if $R$ is a commutative ring, i.e., ``for a commutative ring $R$, $R$ is an Artinian serial ring $R$ if and only if every $R$-module is a direct sum of cyclic submodules". Another one of the important contributions in this direction is due to Nakayama and Skornyakov \cite{Nakayama,Skornyakov} states that ``$R$ is an Artinian serial ring if and only if every left (right) $R$-module is a serial module". Warfield's Theorem on Noetherian serial rings \cite{WarfieldJr} is a landmark in the theory of rings. We recall this theorem as follows: ``A Noetherian ring $R$ is serial if and only if every finitely generated left $R$-module is serial". Warfield also showed that if $R$ is a serial ring, then every finitely presented left $R$-module is a direct sum of finitely many uniserial left $R$-modules, and then, Facchin \cite{FacchiniKrull} introduced the uniqueness of such a direct-sum decomposition.

Most recently, the notions of ``virtually uniserial modules" and ``virtually serial modules" have been introduced and studied by Behboodi et al. in \cite{BehboodiVU} as generalizations of uniserial modules. We recall from \cite[Definitions 1.4]{BehboodiVU} that an $R$-module $M$ is \textit{virtually uniserial} if, for every finitely generated submodule $0 \neq K \subseteq M$, $K/$Rad$(K)$ is virtually simple. Also, an $R$-module $M$ is called \textit{virtually serial} if it is a direct sum of virtually uniserial modules. A ring $R$ is called left virtually uniserial (resp., left virtually serial) if $_RR$ is a virtually uniserial (resp., virtually serial) module. Also, a ring $R$ is called virtually uniserial (resp., virtually serial) if it is both a left and a right virtually uniserial (resp., virtually serial) ring. It follows from \cite[Proposition 55.1]{Wisbauer} that an $R$-module $M$ is uniserial if and only if for every finitely generated submodule $0 \neq K \subseteq M$, $K/$Rad$(K)$ is simple, and by \cite[Remark 2.2.]{Facchini}, every virtually simple module is a cyclic Noetherian uniform module. This motivates us to generalize (virtually uniserial) uniserial modules and rings via the notion of cyclic uniform modules.

In this paper, we continue the study of Behboodi et al. in \cite{BehboodiVU} by dropping the virtually simple condition and replacing it by the cyclic uniform condition and a new class of modules called cyclic-uniform uniserial modules (cu-uniserial modules) is  introduced (see Definition \ref{*}). So a natural question is posed: ``Which rings $R$ have the property that every finitely generated $R$-module is cu-serial?"  This paper is in this direction and it is devoted to answer this question. In section 2, some basic properties of cu-serial (cu-uniserial) modules and rings are given. It is shown that every cu-uniserial module is uniform and B\'ezout (Theorem \ref{222}). For nontrivial cu-uniserial modules see Example \ref{exx}. We identify several classes of rings over which the classes of uniserial modules (resp., ring), virtually uniserial modules (resp., ring) and cu-uniserial modules (resp., ring) are coincide (see Theorems \ref{prev} and \ref{3437}). It is proved that a cu-uniserial ring cannot be semilocal (see Corollary \ref{2.11}). We characterize the structure of Noetherian left cu-uniserial rings in Theorem \ref{vieww}. In Section 3, it shown that a ring $R$ is an Artinian serial ring if and only if every left $R$-module is cu-serial if and only if every left $R$-module is serial if and only if every left $R$-module is virtually
serial (see Corollary \ref{322}). Also, it is proved that for a ring $R$ if every left $R$-module is left cu-serial, then $R/{\rm J}(R)\cong M_{n_1}(D_1) \times \cdots \times M_{n_k}(D_k)$ for some positive integers $n_i$ and some principal left ideal domains $D_i$ (see Theorem \ref{2.113}). Also, it is shown that, for a commutative ring $R$, every finitely generated $R$-module is cu-serial if and only if $R = \prod_{i=1}^nR_i$ where $n > 0$ and the $R_i$'s are almost maximal uniserial rings, principal ideal domains with {\rm J}$(R_i) = 0$ and torch rings such that $R_i/P_i$ is a principal ideal domain with $|{\rm Max}(R_i/P_i)| = \infty$ where $P_i$ is the unique minimal prime ideal of $R_i$ (see Theorem \ref{377}). Also, it is shown that a ring $R$ is a left V-ring such that every finitely generated left $R$-module is cu-serial and every cyclic $R$-module is either virtually simple direct summand of $_RR$ or simple if and only if $R\cong M_{n_1}(D_1) \times \cdots \times M_{n_k}(D_k)$ for some positive integers $n_i$ and some principal ideal V-domains $D_i$ if and only if $R$  is a fully virtually semisimple ring if and only if every finitely generated left $R$-module is a direct sum of virtually simple $R$-modules if and only if every finitely generated left $R$-module is virtually semisimple if and only if every finitely generated left $R$-module is completely virtually semisimple (see Theorem \ref{388}).  Finally, it is proved that if $R$ is a duo ring, then $R$ is a principal ideal ring if and only if every finitely generated left $R$-module $M$ has a decomposition $M = Z\oplus P$ where $Z$ is Noetherian cu-serial and $P$ is a direct sum of projective virtually simple modules (see Theorem \ref{31111}).

\vspace*{1cm}
\begin{center}{\section{Some properties of cyclic-uniform uniserial modules and rings
}}\end{center}

In this section, we introduce cyclic-uniform uniserial (serial) modules and rings, as a nontrivial generalization of uniserial modules and rings. Moreover, some useful properties of them are given.

We start with the following definition.

\begin{defn} \label{*}
Let $R$ be a ring and $M$ an $R$-module. Then $M$ is called cyclic-uniform uniserial (cu-uniserial) if for every finitely generated submodule $0 \neq K \subseteq M$, $K/$Rad$(K)$ is cyclic and uniform. Also, $M$ is said to be cyclic-uniform serial (cu-serial) if it is a direct sum of cyclic-uniform uniserial modules. The ring $R$ is left cyclic-uniform uniserial (resp., left cyclic-uniform serial) if it is cyclic-uniform uniserial (resp., serial) as a left $R$-module. Also, $R$ is said to be cyclic-uniform uniserial (resp., cyclic-uniform serial) if it is both a left and a right cyclic-uniform uniserial (resp., cyclic-uniform serial) ring.
\end{defn}

Recall from \cite{BehboodiAlmost} that a left $R$-module $M$ is said to be \textit{almost uniserial} if for every pair $N,K$ of submodules of $M$ either $N \subseteq K$, or $K \subseteq N$, or $N \cong K$. It is easy to see that every uniserial module is cu-uniserial, but the converse need not be true in general. So, one can easily see that the $\mathbb{Z}$-module $\mathbb{Q}$ is a cu-uniserial $\mathbb{Z}$-module, but it is not a uniserial $\mathbb{Z}$-module. Also, the class of cu-uniserial modules and almost uniserial modules are not coincide, since the $\mathbb{Z}$-module $\mathbb{Q}$ is a cu-uniserial module, but is not an almost uniserial $\mathbb{Z}$-module. However, the converse is not true also, since, for example, the $\mathbb{Z}$-module $\mathbb{Z}_2\oplus \mathbb{Z}_2$ is an almost uniserial $\mathbb{Z}$-module, but is not a cu-uniserial $\mathbb{Z}$-module, because $((0\oplus \mathbb{Z}_2)/$Rad$(0\oplus \mathbb{Z}_2)) \cap ((\mathbb{Z}_2\oplus 0)/$Rad$(\mathbb{Z}_2\oplus 0)) = 0$, and so $(\mathbb{Z}_2\oplus \mathbb{Z}_2)/$Rad$(\mathbb{Z}_2\oplus \mathbb{Z}_2)$ is not uniform. In the other hand, it follows from \cite[Proposition 2.4]{BehboodiVU} taht every virtually uniserial module is cu-uniserial, but the converse need not be true in general. Here is an example of a uniform cyclic module that is not virtually simple (see \cite[Remark 6.6. (1)]{Facchini}). Let $D$ be an integral domain with $D_D$ uniform but not isonoetherian (e.g. $k[x_1, x_2, \cdots]$, where $k$ is a field). Then $D_D$ is a cylic uniform module, but it is not virtually simple, by \cite[Remark 6.6(1)]{Facchini}. Therefore, a cu-uniserial module need not be virtually uniserial. Consequently, in view of the last display, we have
\begin{center}
{
\{uniserial modules\} $\subsetneq$ \{virtually uniserial modules\} $\subsetneq$ \{cu-uniserial modules\} $\nsubseteq$ \{almost uniserial modules\}

\{uniserial modules\} $\subsetneq$ \{almost uniserial modules\} $\nsubseteq$ \{cu-uniserial modules\}
}
\end{center}

 For nontrivial cu-uniserial modules see the following examples.

\begin{example}\label{exx} {\rm

$(1)$	 The $\mathbb{Z}$-module $M = \mathbb{Z}_{p^n}$ is a finitely generated cu-uniserial left $\mathbb{Z}$-module, for every prime number $p$ and integer $n > 1$ (see Theorem \ref{2.115}).

$(2)$ The $\mathbb{Z}$-module $\mathbb{Z}$ is cu-uniserial, but it is not a uniserial module (see Theorem \ref{2.115}).

$(3)$ Every discrete valuation domain $R$ as an $R$-module is a finitely generated cu-uniserial $R$-module (see Theorem \ref{2.115}).

$(4)$  Let $I$ be an infinite proper subset of the set of all prime numbers, $S = \mathbb{Z} \setminus \bigcup_{p\in I}p\mathbb{Z}$ a multiplicatively closed subset of $\mathbb{Z}$ and $M = S^{-1}\mathbb{Z}$. Then $M$ is a $\mathbb{Z}$-submodule of $\mathbb{Q}$, and hence $M$ is a cu-uniserial $\mathbb{Z}$-module.
}
\end{example}

By \cite[Proposition 2.4]{BehboodiVU}, every (virtually) uniserial $R$-module is uniform and B\'ezout. In the following theorem we show that this fact is hold for cu-uniserial modules.

\begin{thm} \label{222}
Let $M$ be a cu-uniserial left $R$-module. Then $M$ is uniform and B\'ezout.
\end{thm}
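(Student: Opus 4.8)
The plan is to establish the two conclusions separately, each by applying the defining hypothesis to a suitably chosen small submodule and combining it with the fact that the radical of a finitely generated module is superfluous.

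For the B\'ezout property, first recall the standard fact that if $N$ is a finitely generated module then $\mathrm{Rad}(N)$ is a superfluous (small) submodule of $N$: if $\mathrm{Rad}(N)+L=N$ with $L\neq N$, then since $N$ is finitely generated $L$ lies in some maximal submodule $L_0$, whence $\mathrm{Rad}(N)\subseteq L_0$ and so $N=\mathrm{Rad}(N)+L\subseteq L_0$, a contradiction. Now let $0\neq K\subseteq M$ be finitely generated. By hypothesis $K/\mathrm{Rad}(K)$ is cyclic, say generated by $x+\mathrm{Rad}(K)$; then $K=Rx+\mathrm{Rad}(K)$, and superfluity of $\mathrm{Rad}(K)$ forces $K=Rx$. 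Hence every finitely generated submodule of $M$ is cyclic, i.e. $M$ is B\'ezout.

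For uniformity (so in particular $M\neq 0$, which we assume as in the uniserial case), suppose toward a contradiction that $M$ is not uniform, so there are nonzero submodules $A,B\subseteq M$ with $A\cap B=0$. Pick $0\neq a\in A$ and $0\neq b\in B$ and put $K=Ra+Rb$. Since $Ra\cap Rb\subseteq A\cap B=0$ the sum is direct, $K=Ra\oplus Rb$, and $K$ is a nonzero finitely generated submodule of $M$. Because the radical commutes with finite direct sums, $\mathrm{Rad}(K)=\mathrm{Rad}(Ra)\oplus\mathrm{Rad}(Rb)$, so
\[
K/\mathrm{Rad}(K)\;\cong\;\bigl(Ra/\mathrm{Rad}(Ra)\bigr)\oplus\bigl(Rb/\mathrm{Rad}(Rb)\bigr).
\]
Now $Ra$ and $Rb$ are nonzero cyclic, hence finitely generated, modules, so again $\mathrm{Rad}(Ra)\ll Ra\neq 0$ and $\mathrm{Rad}(Rb)\ll Rb\neq 0$, which gives $Ra/\mathrm{Rad}(Ra)\neq 0$ and $Rb/\mathrm{Rad}(Rb)\neq 0$. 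Thus $K/\mathrm{Rad}(K)$ is a direct sum of two nonzero modules and therefore is not uniform, contradicting the hypothesis applied to the finitely generated submodule $K$. Consequently any two nonzero submodules of $M$ have nonzero intersection, i.e. every nonzero submodule of $M$ is essential, so $M$ is uniform.

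The argument requires only two auxiliary facts---that $\mathrm{Rad}(N)\ll N$ for finitely generated $N$, and that $\mathrm{Rad}$ respects finite direct sums---both of which are standard (see e.g. \cite{AndersonFuller,Wisbauer}), so I do not anticipate a genuine obstacle; the one place to be careful is to insist that the submodules $Ra$ and $Rb$ be nonzero before concluding their radical quotients are nonzero, and to note that finite generation of $K$ (which holds since $K$ is $2$-generated) is exactly what makes the hypothesis of Definition \ref{*} available in the uniformity step.
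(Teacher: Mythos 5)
Your proof is correct and follows essentially the same route as the paper: the Bézout part via $K=Rx+\mathrm{Rad}(K)$ and superfluity of the radical, and uniformity by applying the hypothesis to $Ra\oplus Rb$ and using $\mathrm{Rad}(Ra\oplus Rb)=\mathrm{Rad}(Ra)\oplus\mathrm{Rad}(Rb)$. Your version is in fact slightly more careful than the paper's, since you explicitly check that the two radical quotients are nonzero before concluding non-uniformity.
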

\begin{proof}
{Suppose to the contrary, $M$ is a cu-uniserial non-uniform left $R$-module and $Rx, Ry$ are nonzero cyclic submodules of $M$ such that $Rx \cap Ry = 0$. Then $(Rx \oplus Ry)/$Rad$(Rx \oplus Ry) \cong (Rx/$Rad$(Rx)) \oplus (Ry/$Rad$(Ry))$. Hence $(Rx\oplus Ry)/$Rad$(Rx\oplus Ry)$ is not uniform. This means that $M$ is not cu-uniserial, a contradiction. Thus every cu-uniserial left $R$-module $M$ is uniform. Next, we show that $M$ is B\'ezout. Let $N$ be a nonzero finitely generated submodule of $M$. Since $M$ is a cu-uniserial left $R$-module, $N/$Rad$(N)$ is cyclic. Therefore, there exists $x\in N$ such that $N/$Rad$(N) = \langle x + $Rad$(N) \rangle$. Hence, for every $n\in N$, $n + $Rad$(N) = rx + $Rad$(N)$, for some $r \in R$. Thus $n = (n - rx) + rx \in $Rad$(N) + Rx$, and so Rad$(N) + Rx = N$. This means that $N = Rx$, since Rad$(N)$ is superfluous in $N$, by \cite[Proposition 21.6 (4)]{Wisbauer}.}
\end{proof}

In the next result, we study left cu-uniserial rings $R$ which are  principal left ideal domains.
First, recall from \cite[Definition 7.5]{Lam} that an $R$-module $M$ is said to be \textit{singular} (resp., \textit{nonsingular}) if $Z(M) = M$ (resp., $Z(M) = 0$), where the \textit{singular submodule} $Z(M)$ of a left (resp., right) $R$-module $M$ consisting of
elements whose annihilators are essential left (resp., right) ideals in $R$. Also, if every left ideal of $R$ is projective, $R$ is called \textit{left hereditary}.

\begin{prop} \label{2.3}
Let $R$ be a left cu-uniserial ring. If $R$ is either left Noetherian and left nonsingular, or left hereditary, then $R$ is a  principal left ideal domain. Moreover, if $R$ is a principal left ideal domain with ${\rm J}(R) = 0$, then $R$ is a left cu-uniserial ring.
\end{prop}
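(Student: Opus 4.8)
The plan is to prove the three implications in Proposition \ref{2.3} separately.

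\textbf{Step 1: If $R$ is left cu-uniserial, then $R$ is a domain.} By Theorem \ref{222}, the module $_RR$ is uniform, hence it has no nonzero idempotents giving a direct decomposition; more usefully, uniformity of $_RR$ means every nonzero left ideal is essential. I would use this to kill zero-divisors: suppose $ab = 0$ with $a, b \neq 0$. Then $\mathrm{r.ann}(a) \supseteq Rb$... but I need a left-module argument. Instead, consider that $_RR$ being uniform forces $R$ to have no nonzero left ideal that is a direct summand other than $R$ itself, and combined with the hereditary or Noetherian+nonsingular hypothesis we get more. Actually the cleanest route: by Theorem \ref{222}, $_RR$ is B\'ezout, so $R$ is a principal left ideal ring (every finitely generated, hence every, left ideal is cyclic — note finitely generated left ideals suffice, and $R$ is finitely generated over itself... one must be slightly careful, but B\'ezout plus the chain conditions available will give principal left ideal ring). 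So the real content is \emph{domain}. For this I would argue: a left principal ideal ring that is left nonsingular (this holds automatically if left hereditary, and is hypothesized in the Noetherian case) and uniform must be a domain. Indeed if $0 \neq a \in R$ with $\mathrm{l.ann}(a) \neq 0$, then since $_RR$ is uniform, $\mathrm{l.ann}(a)$ is an essential left ideal, so $a \in Z(_RR) = 0$ by nonsingularity, a contradiction. Hence $R$ has no left zero-divisors, i.e., $R$ is a domain. (For the left hereditary case, recall that a left hereditary ring is left nonsingular, or argue directly that a projective left ideal has zero left annihilator using uniformity.)

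\textbf{Step 2: Deduce $R$ is a principal left ideal domain.} Combine Step 1 with the B\'ezout conclusion of Theorem \ref{222}: every finitely generated left ideal of $R$ is cyclic. In the Noetherian case every left ideal is finitely generated, hence cyclic. In the left hereditary case I would note that a left hereditary left B\'ezout ring has all left ideals projective and finitely generated ones cyclic; but actually to conclude \emph{principal} left ideal domain I should observe that over a domain, left hereditary plus B\'ezout forces every left ideal to be cyclic — this is the standard fact that a left B\'ezout left hereditary domain is a principal left ideal domain (any left ideal is a directed union of cyclic, hence projective, left ideals; over a domain a nonzero cyclic left ideal $Ra$ is free of rank $1$, and one shows the union is itself cyclic). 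So $R$ is a principal left ideal domain in both cases.

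\textbf{Step 3: Converse — a principal left ideal domain $R$ with $\mathrm{J}(R) = 0$ is left cu-uniserial.} Let $0 \neq K \subseteq {}_RR$ be a finitely generated left ideal; since $R$ is a principal left ideal domain, $K = Ra$ for some $0 \neq a$, and $K \cong {}_RR$ as left modules because $R$ is a domain (the map $r \mapsto ra$ is injective). Thus it suffices to show $R/\mathrm{Rad}(R) = R/\mathrm{J}(R) = R/0 = R$ is cyclic and uniform as a left $R$-module. Cyclic is clear. Uniform: any nonzero left ideal $Ra$ of the domain $R$ intersects any other nonzero left ideal $Rb$ nontrivially — over a domain that is a left Ore domain this is immediate, and a principal left ideal domain is left Ore (it is left Noetherian, hence left Ore); alternatively $ba \neq 0$ and $ba \in Ra \cap Rb$ after adjusting, so $_RR$ is uniform. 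Since $K/\mathrm{Rad}(K) \cong R/\mathrm{J}(R)$ is cyclic and uniform for every such $K$, $R$ is left cu-uniserial.

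\textbf{Main obstacle.} The delicate point is Step 1/Step 2 in the \emph{left hereditary} case without a chain condition: passing from ``finitely generated left ideals are cyclic'' (B\'ezout) to ``\emph{all} left ideals are principal'' requires the hereditary hypothesis to control possibly-infinitely-generated left ideals, and one must invoke that a left B\'ezout left hereditary domain is a principal left ideal domain — I would either cite this or give the short argument that a nonfinitely generated left ideal would be a non-stabilizing directed union of rank-one free submodules, contradicting projectivity (or contradicting uniformity of $_RR$ directly). Also one must be careful that Theorem \ref{222} is applied to $_RR$ correctly and that ``cu-uniserial ring'' in the hypothesis is being used via its left part; everything else is routine.
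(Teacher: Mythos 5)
Your first case and the converse are essentially sound and run parallel to the paper: uniformity plus nonsingularity kills zero-divisors (you reprove directly what the paper cites as Goodearl, Corollary 3.25), Noetherian plus B\'ezout then gives a principal left ideal domain, and for the ``Moreover'' part your observation that every nonzero finitely generated left ideal $K=Ra$ is isomorphic to ${}_RR$, so that $K/\mathrm{Rad}(K)\cong R/\mathrm{J}(R)=R$ is cyclic and uniform, is a correct (and more explicit) version of the paper's appeal to virtual simplicity of ${}_RR$. Two small blemishes there: the parenthetical claim that B\'ezout already makes $R$ a principal left ideal ring is false without a chain condition (you later retract it, but it should simply be deleted), and the ``alternatively $ba\in Ra\cap Rb$ after adjusting'' remark does not work ($ba$ need not lie in $Rb$); the left Ore property really does come from $R$ being left Noetherian, as in your main line.

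The genuine gap is the left hereditary case. Your fallback sketch --- that a non-finitely-generated left ideal would be a non-stabilizing directed union of rank-one free submodules, ``contradicting projectivity (or contradicting uniformity of ${}_RR$ directly)'' --- proves nothing as stated: uniformity is not contradicted at all (the $\mathbb{Z}$-submodule $\mathbb{Z}[1/2]$ of $\mathbb{Q}$ is uniform and is a strictly increasing union of cyclic free submodules), and showing that such a union cannot be projective is exactly the nontrivial point, not an obvious consequence. The clean repair is the paper's route: left hereditary implies left nonsingular (Lam, Examples 7.6(8)), uniformity of ${}_RR$ gives $\mathrm{u.dim}({}_RR)=1<\infty$, and a left hereditary ring of finite left uniform dimension is left Noetherian (Lam, Corollary 7.58), which reduces the hereditary case to the Noetherian nonsingular case you already handled. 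If you prefer to stay on your route, you can complete it, but with a real argument rather than the sketch: since $R$ is a domain with ${}_RR$ uniform it is left Ore; for a projective left ideal $I$ with dual basis $\{(x_\alpha,f_\alpha)\}$, any $f_\alpha$ vanishing at one nonzero point of $I$ vanishes identically (compare $f_\alpha$ on $Rx\cap Rx_0\neq 0$), so fixing $0\neq x_0\in I$ only finitely many $f_\alpha$ are nonzero and the dual basis formula exhibits $I$ as generated by the corresponding finitely many $x_\alpha$; hence every left ideal is finitely generated and B\'ezout finishes the proof. Either way, that missing step must be supplied or cited; as written, the hereditary case is incomplete.
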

\begin{proof}
{Let $R$ be a left cu-uniserial ring. First suppose that $R$ is a left Noetherian and left nonsingular ring.
It follows from Theorem \ref{222} that the left $R$-module $_{R}R$ is uniform and B\'ezout, and hence $R$ is a domain, by \cite[Corollary 3.25]{Goodearl}. Since $R$ is a left Noetherian and B\'ezout ring, it is a principal left ideal domain. Now, suppose that $R$ is a left hereditary ring. Then $R$ is a left nonsingular ring, by \cite[Examples 7.6 (8)]{Lam}. But $_RR$ is uniform, by Theorem \ref{222}, hence u.dim($_RR) = 1 < \infty$. Thus $R$ is a left Noetherian ring, by \cite[Corollary 7.58]{Lam}, and so $R$ is a  principal left ideal domain.

For the ``Moreover" statement, assume that $R$ is a principal left ideal domain with $J(R) = 0$. Then by \cite[Proposition 2.2]{BehboodiSeveral}, $_{R}R$ is virtually simple, and hence $R/$J$(R)$ is cyclic and uniform. Thus $R$ a left cu-uniserial ring.
}
\end{proof}

 The class of uniserial modules, virtually uniserial modules and cu-uniserial modules are not coincide. In the following results we give several classes of rings for which these classes of modules are coincide.

\begin{thm} \label{prev}
Suppose that $R$ is a finite direct product of local rings. Then every cu-uniserial left $R$-module is (virtually) uniserial. In particular, If $R$ is a local or an Artinian ring, then every cu-uniserial left $R$-module is (virtually) uniserial.

\end{thm}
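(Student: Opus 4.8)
The plan is to reduce everything to the characterization of uniserial modules already noted in the text: by \cite[Proposition 55.1]{Wisbauer}, $M$ is uniserial precisely when $K/\mathrm{Rad}(K)$ is simple for every nonzero finitely generated submodule $K \subseteq M$, and once $M$ is uniserial it is virtually uniserial by the inclusion displayed above. So I would fix such a $K$ and aim to prove $K/\mathrm{Rad}(K)$ is simple. First, Theorem \ref{222} gives that $M$ is B\'ezout, so $K = Rx$ is cyclic, whence $K/\mathrm{Rad}(K)$ is cyclic; it is nonzero and uniform by the cu-uniserial hypothesis; and since $\mathrm{J}(R)K \subseteq \mathrm{Rad}(K)$ always holds, it is a module over $\overline{R} := R/\mathrm{J}(R)$. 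Thus the task becomes: \emph{a nonzero cyclic uniform $\overline{R}$-module is simple}.

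Here I would invoke the hypothesis on $R$. Writing $R = R_1 \times \cdots \times R_n$ with each $R_i$ local, one gets $\overline{R} \cong \overline{R_1} \times \cdots \times \overline{R_n}$ with each $\overline{R_i} = R_i/\mathrm{J}(R_i)$ a division ring. Then for the central idempotents $e_1, \dots, e_n$ of $\overline{R}$, any $\overline{R}$-module $N$ splits as $N = e_1N \oplus \cdots \oplus e_nN$ with $e_iN$ a left vector space over $\overline{R_i}$; if $N$ is cyclic each $e_iN$ is cyclic over $\overline{R_i}$, hence at most one-dimensional, and if in addition $N$ is uniform and nonzero then exactly one $e_iN$ is nonzero and it is one-dimensional, i.e.\ $N$ is simple. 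Applied to $N = K/\mathrm{Rad}(K)$, this finishes the uniserial, hence virtually uniserial, claim.

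For the ``in particular'': a local ring is the case $n = 1$, so it is already covered. If $R$ is Artinian then it is semilocal, so $\overline{R} \cong M_{n_1}(D_1) \times \cdots \times M_{n_k}(D_k)$ is semisimple Artinian, and I would run the same argument after the routine remark that over a semisimple Artinian ring every cyclic module embeds in ${}_{R}R$ and so is a finite direct sum of simples with at most one copy of each isomorphism type; uniformity again forces a single summand. (In fact this version subsumes the product-of-local-rings case, since that makes $\overline{R}$ a product of division rings.)

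I do not expect a serious obstacle: the content is really Theorem \ref{222} plus the structure of cyclic modules over $R/\mathrm{J}(R)$. The two points to handle carefully are the legitimacy of passing to $\overline{R}$ (which is just $\mathrm{J}(R)K \subseteq \mathrm{Rad}(K)$ together with $K/\mathrm{Rad}(K) \ne 0$) and pinning down cyclic modules over a semisimple ring; and it is worth emphasizing that the \emph{uniform} half of the cu-uniserial hypothesis is exactly what rules out $K/\mathrm{Rad}(K)$ being a sum of two or more nonzero simples, which is where this class of rings genuinely collapses ``cu-uniserial'' to ``uniserial''.
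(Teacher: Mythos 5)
Your argument is correct, and it takes a route that differs from the paper's in a useful way. The paper reduces to $R = R_1 \times R_2$ with each $R_i$ local, takes a nonzero finitely generated $N \subseteq M$, uses Theorem \ref{222} to get $N$ cyclic and uniform and to write $N \cong (R_1/I_1) \times (R_2/I_2)$, and then uses uniformity of $N$ itself to kill one factor, so that $N$ is a cyclic module over a local ring, hence local, and $N/\mathrm{Rad}(N)$ is simple; \cite[Proposition 55.1]{Wisbauer} then finishes. You instead work modulo the radical from the start: $K$ is cyclic by Theorem \ref{222}, the inclusion $\mathrm{J}(R)K \subseteq \mathrm{Rad}(K)$ together with $K/\mathrm{Rad}(K) \neq 0$ makes $K/\mathrm{Rad}(K)$ a nonzero cyclic uniform module over $R/\mathrm{J}(R)$, and over a finite product of division rings (or, in the Artinian case, a semisimple ring) such a module is simple. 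What your version buys is an honest treatment of the ``in particular'' statement: the paper dismisses it as obvious, but a noncommutative Artinian ring need not be a finite direct product of local rings (e.g.\ $M_2(k)$), whereas your semisimple-$R/\mathrm{J}(R)$ argument covers the Artinian case directly --- there semisimplicity plus uniformity of $K/\mathrm{Rad}(K)$ already forces simplicity, even without invoking cyclicity. One small inaccuracy: a cyclic module over a semisimple Artinian ring need not be a direct sum of simples with at most one copy of each isomorphism type (${}_RR$ for $R = M_2(D)$ is cyclic and is two copies of the same simple); but this remark is never used, since uniformity of the semisimple module $K/\mathrm{Rad}(K)$ alone forces a single simple summand, so your proof stands as written.
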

\begin{proof}
{Suppose that $R$ is a finite direct product of local rings. It is sufficient to prove the result in the case $R = R_1 \times R_2$, in which $R_1$ and $R_2$ are local rings. Let $M$ be a cu-uniserial left $R$-module and $N$ a non-zero finitely generated submodule of $M$. Then $M$ is uniform and B\'ezout, by Theorem \ref{222}. This means that $N$ is cyclic, and hence there exist left ideals $I_1$ of $R_1$ and $I_2$ of $R_2$ such that $N \cong (R_1 \times R_2)/(I_1 \times I_2) \cong (R_1/I_1) \times (R_2/I_2)$. Since $N$ is uniform, either $N\cong R_1/I_1$ or $N\cong R_2/I_2$. Therefore, $N$ is a local submodule. Hence $N/$Rad$(N)$ is a simple submodule of $M$, and thus $M$ is (virtually) uniserial, by \cite[Proposition 55.1 $(d)\Rightarrow (a)$]{Wisbauer}.

The ``In particular" statement is obvious.
}
\end{proof}

\begin{thm} \label{3437}
Let $R$ be a ring. Then the following statements are equivalent:

$(a)$ $R$ is a left uniserial ring.

$(b)$ $R$ is a local left cu-uniserial ring.

$(c)$ $R$ is a semilocal left cu-uniserial ring.

$(d)$ $R$ is a local left virtually uniserial ring.
\end{thm}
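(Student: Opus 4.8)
The plan is to prove the single cycle of implications $(a)\Rightarrow(d)\Rightarrow(b)\Rightarrow(c)\Rightarrow(a)$; the first three arrows are essentially formal, and the content sits in $(c)\Rightarrow(a)$. For $(a)\Rightarrow(d)$: if $_RR$ is uniserial then the left ideals of $R$ are linearly ordered, so $R$ has a unique maximal left ideal and is local, while every uniserial module is virtually uniserial (a uniserial $M$ has $K/{\rm Rad}(K)$ simple, hence virtually simple, for each finitely generated $0\neq K\subseteq M$, by \cite[Proposition 55.1]{Wisbauer}), so $R$ is left virtually uniserial. For $(d)\Rightarrow(b)$: by \cite[Remark 2.2]{Facchini} every virtually simple module is cyclic and uniform, so a virtually uniserial module is cu-uniserial and locality is unaffected. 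For $(b)\Rightarrow(c)$: a local ring is semilocal, since $R/{\rm J}(R)$ is then a division ring, which is semisimple Artinian.

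The substantive step is $(c)\Rightarrow(a)$, and I would split it into two parts. \emph{First, deduce that $R$ is local.} The key trick is to feed the finitely generated submodule $K={}_RR$ into the definition of cu-uniserial: this says precisely that $R/{\rm Rad}({}_RR)=R/{\rm J}(R)$ is a cyclic, \emph{uniform} left $R$-module. On the other hand, since $R$ is semilocal, $R/{\rm J}(R)$ is a semisimple Artinian ring, hence semisimple as a module over itself, hence semisimple as a left $R$-module through the projection $R\twoheadrightarrow R/{\rm J}(R)$. A semisimple module that is uniform must be simple (two distinct nonzero simple summands would intersect trivially). So $R/{\rm J}(R)$ is simple as a left module over itself, i.e.\ a division ring, whence ${\rm J}(R)$ is the unique maximal left ideal of $R$ and $R$ is local.

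\emph{Second, conclude that $R$ is left uniserial.} Now that $R$ is a local left cu-uniserial ring, I would simply invoke Theorem \ref{prev} (its statement for local rings): every cu-uniserial left $R$-module is uniserial, and in particular $_RR$ is uniserial, so $R$ is a left uniserial ring and the cycle closes. If one prefers not to cite Theorem \ref{prev} here, the same conclusion follows directly: for finitely generated $0\neq K\subseteq{}_RR$, the module $K/{\rm Rad}(K)$ is cyclic and nonzero (as ${\rm Rad}(K)$ is superfluous in $K$), so $K/{\rm Rad}(K)\cong R/I$ with $I$ a proper left ideal satisfying ${\rm Rad}(R/I)=0$; since $R$ is local this forces $I={\rm J}(R)$, so $K/{\rm Rad}(K)$ is simple and \cite[Proposition 55.1]{Wisbauer} gives that $_RR$ is uniserial.

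The part I expect to require a genuine idea rather than bookkeeping is the ``semilocal $\Rightarrow$ local'' reduction inside $(c)\Rightarrow(a)$: noticing that $_RR$ is itself an admissible test module in Definition \ref{*}, which forces $R/{\rm J}(R)$ to be cyclic-uniform, and then combining ``uniform $+$ semisimple $\Rightarrow$ simple'' with the elementary fact that a ring which is simple as a one-sided module over itself is a division ring. Everything after that is either a definitional inclusion recorded earlier in the paper or a direct appeal to Theorem \ref{prev} together with \cite[Proposition 55.1]{Wisbauer}.
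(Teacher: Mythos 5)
Your proposal is correct and follows essentially the same route as the paper: the substantive step $(c)\Rightarrow(a)$ is identical in spirit, namely feeding $K={}_RR$ into the definition to see that $R/\mathrm{J}(R)$ is cyclic uniform, using semilocality to make $R/\mathrm{J}(R)$ semisimple, forcing it to be a division ring, concluding $R$ is local, and then finishing with the local case (Theorem \ref{prev}, or your equivalent direct argument via $\mathrm{Rad}(R/I)=J/I$). The only differences are cosmetic: you arrange the statements in a single cycle and prove the link with $(d)$ directly (the paper instead cites \cite[Theorem 2.10]{BehboodiVU} for $(a)\Leftrightarrow(d)$), and you replace the paper's appeal to the Wedderburn--Artin decomposition by the more elementary observation that a uniform semisimple module is simple, which is a mild simplification rather than a different method.
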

\begin{proof}
{ $(a)\Rightarrow (b)\Rightarrow (c)$ are obvious.

$(a)\Leftrightarrow (d)$ It follows from \cite[Theorem 2.10]{BehboodiVU}.

$(c)\Rightarrow (a)$ Let $R$ be a semilocal left cu-uniserial ring. Then $R/$J$(R)$ is a cyclic uniform left $R$-module. It is easy to see that $_{R/ {\rm J}(R)}R/$J$(R)$ is a left cyclic uniform $R/$J$(R)$-module, since J$(R)(R/$J$(R)) = 0$ and $R/$J$(R)$ is a cyclic uniform left $R$-module. Since $R$ is a semilocal ring, $R/$J$(R)$ is a semisimple Artinian ring, and so $R/$J$(R)\cong M_{n_1}(D_1) \times \cdots \times M_{n_k}(D_k)$ for some positive integers $n_i$ and some division rings $D_i$, by \cite[Theorem 4.4 $(e)$]{GoodearlWarfield}. Therefore, $R/$J$(R)$ is a division ring, since it is uniform. Hence, \cite[Proposition 15.15 $(a)$]{AndersonFuller} implies that $R$ is a local ring. Thus $R$ is a left uniserial ring, by Theorem \ref{prev}.
}
\end{proof}
  In light of Theorems \ref{prev} and \ref{3437}, we state the following corollaries.

\begin{cor} \label{2.11}
Let $R$ be a left cu-uniserial ring. Then either $|{\rm Max}_l(R)| = \infty$ or $R$ is local.

\end{cor}
\begin{proof}
{ The proof follows from Theorem \ref{3437}.
}
\end{proof}

\begin{cor}
Let $(R, M )$ be a local ring. Then the following statements are equivalent:

$(a)$ Every proper ideal of $R$ is cu-serial.

$(b)$ There exist a uniserial module $U$ and a semisimple module $T$ such that $M = U \oplus T$.

$(c)$ There exist a uniserial module $U$ and a semisimple module $T$ such that each proper ideal of $R$ is isomorphic to $U' \oplus T'$ for some $U' \leq U$ and $T' \leq T$.

\end{cor}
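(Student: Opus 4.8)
The plan is to prove the cycle $(a)\Rightarrow(b)\Rightarrow(c)\Rightarrow(a)$, exploiting that in a local ring $(R,M)$ every cu-uniserial module is in fact (virtually) uniserial by Theorem \ref{prev}, so that ``cu-serial'' over $R$ simply means ``direct sum of uniserial modules.'' For $(a)\Rightarrow(b)$, apply the hypothesis to the proper ideal $M$ itself: write $M = \bigoplus_{i\in I} U_i$ with each $U_i$ uniserial. Since $M/M^2$ is a semisimple $R/M$-module (a vector space over the division ring $R/M$) and each $U_i/MU_i$ is at most one-dimensional, at most finitely many $U_i$ can be non-superfluous; more carefully, because $M$ is superfluous in $R$ (it is the Jacobson radical), an infinite direct sum inside $M$ with infinitely many summands not contained in $M^2$ cannot occur, so all but finitely many $U_i$ satisfy $U_i \subseteq MU_i$, forcing $U_i = 0$ by Nakayama. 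Hence $M = U_1\oplus\cdots\oplus U_n$ is a finite direct sum of uniserial modules; reorder so that $U_1$ has the largest length among those of infinite length (or, if all have finite length, the largest length), and observe that each $U_i$ with $i\geq 2$ must be simple — otherwise $U_i$ would contain a submodule isomorphic to a proper nonzero submodule giving a second ``long direction,'' but since the $U_i$ with $i\ge2$ are pairwise embeddable into a common uniserial chain only when simple. Thus set $U = U_1$ (uniserial) and $T = U_2\oplus\cdots\oplus U_n$ (semisimple), giving $M = U\oplus T$.

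The implication $(b)\Rightarrow(c)$ is the technical heart. Assume $M = U\oplus T$ with $U$ uniserial and $T$ semisimple, and let $N$ be any proper ideal of $R$; then $N\subseteq M$, so $N \subseteq U\oplus T$. Since $T$ is semisimple, the submodule $N\cap T$ is a direct summand of $T$, say $T = (N\cap T)\oplus T''$. The subtle point is to show $N$ itself splits as $(N\cap U)\oplus(N\cap T)$: project $N$ onto $U$ along $T$; the image $\pi_U(N)$ is a submodule of the uniserial module $U$, hence a member of the chain, and one argues that $N = (N\cap U)\oplus(N\cap T)$ by showing $\pi_U(N) = N\cap U$. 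This last equality is where I expect to spend the most effort: given $n = u + t \in N$ with $u\in U$, $t\in T$, we need $u\in N$. One route is to use that $T$ semisimple implies $t$ lies in a simple summand $S\le T$ and $S\cap N$ is either $0$ or $S$; in the former case $MT\cap S\subseteq$ (something) forces structure, and in the latter $t\in N$ so $u = n - t\in N$. Handling the case $S\cap N = 0$ requires showing the ``$U$-component'' and ``$T$-component'' of any element of $N$ are separately in $N$, which should follow because $U$ and $T$ have no common composition factors arranged compatibly — concretely, because $T$ is semisimple while the relevant proper submodules of $U$ that could ``mix'' are themselves either $0$ or already split off. Granting $N = (N\cap U)\oplus(N\cap T)$, set $U' = N\cap U\le U$ and $T' = N\cap T\le T$ and we are done with $(b)\Rightarrow(c)$.

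Finally $(c)\Rightarrow(a)$ is immediate: each proper ideal $N$ is isomorphic to $U'\oplus T'$ with $U'\le U$ uniserial and $T'\le T$ semisimple, hence $N$ is a direct sum of a uniserial module and a semisimple module; a semisimple module is trivially a direct sum of (simple, hence) uniserial modules, so $N$ is cu-serial. I expect the main obstacle throughout to be the splitting lemma ``$N = (N\cap U)\oplus(N\cap T)$ for every submodule $N$ of $U\oplus T$ with $U$ uniserial and $T$ semisimple,'' which is false for arbitrary $U$ but should hold here because the ambient module sits inside the radical $M$ of a local ring and $U$ is uniserial (so its submodule lattice is a chain, ruling out the diagonal submodules that obstruct splitting in the general case); I would isolate and prove that lemma first, then feed it into both $(a)\Rightarrow(b)$ and $(b)\Rightarrow(c)$.
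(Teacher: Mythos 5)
Your overall strategy diverges from the paper's, and the divergence matters: the paper's proof is essentially a reduction plus a citation. Over a local ring every cu-uniserial module is uniserial (Theorem \ref{prev}/\ref{3437}), so condition $(a)$ is just ``every proper ideal of $R$ is serial,'' and the equivalence of that with $(b)$ and $(c)$ is precisely \cite[Theorem 3.1]{Heidariserial}. You correctly perform the reduction, but then you try to reprove the Behboodi--Heidari theorem from scratch, and the sketch does not go through. In $(a)\Rightarrow(b)$ you only use the hypothesis on the single ideal $M$; that cannot suffice. Knowing $M=\bigoplus_i U_i$ with all $U_i$ uniserial does not yield $(b)$ unless you rule out two non-simple summands, and doing so genuinely requires applying $(a)$ to other ideals (e.g.\ to cyclic ideals generated by ``diagonal'' elements): for $R=k[x,y]/(x^3,xy,y^3)$ one has $M=(x)\oplus(y)$, a direct sum of two length-two uniserials, and the ideal $(x+y)$ is not serial --- excluding this configuration is the real content, and your ``second long direction'' remark is not an argument. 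Your finiteness step is also both unnecessary ($T$ in $(b)$ may be an infinite direct sum of simples) and invalid as written: Nakayama's lemma needs $U_i$ finitely generated, and superfluousness of $M$ in $R$ does not bound the number of summands.

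The step you yourself flag as the heart, the splitting lemma $N=(N\cap U)\oplus(N\cap T)$ for every ideal $N\leq M=U\oplus T$, is in fact false in exactly this setting. Take $R=k[x,y]/(x^3,xy,y^2)$, $U=(x)$ uniserial of length $2$, $T=(y)$ simple, $M=U\oplus T$, and $N=(x+y)$. Then $N\cap U=(x^2)$ and $N\cap T=0$, so $(N\cap U)\oplus(N\cap T)=(x^2)\subsetneq N$; nevertheless $N\cong U$, so $N$ is isomorphic to some $U'\oplus T'$, which is all $(c)$ asserts. This shows that $(c)$ is an isomorphism statement, not an internal direct-sum statement, and any correct proof of $(b)\Rightarrow(c)$ must work up to isomorphism --- which is why the paper delegates it to \cite[Theorem 3.1]{Heidariserial} rather than arguing via intersections. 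Your $(c)\Rightarrow(a)$ is fine (uniserial and simple modules are cu-uniserial), but as it stands the proposal has a false key lemma and an incomplete $(a)\Rightarrow(b)$; the efficient repair is the paper's route: reduce to the serial case via Theorem \ref{prev} and invoke the cited theorem.
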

\begin{proof}
{ The proof follows from Theorem \ref{3437} and \cite[Theorem 3.1]{Heidariserial}.
}
\end{proof}

Next, left cu-uniserial Noetherian rings are characterized.
\begin{thm} \label{vieww}
Let $R$ be a Noetherian ring. Then the following statements are equivalent:

$(a)$ $R$ is a left cu-uniserial ring.

$(b)$ $R$ is either a left uniserial ring of finite length or a principal left ideal domain with {\rm J}$(R) = 0$ or a local principal left ideal domain.

$(c)$ $R$ is either a left uniserial ring or a principal left ideal domain with ${\rm J}(R) = 0$.

$(d)$ $R$ is either a left uniserial ring or a principal ideal domain with ${\rm J}(R) = 0$.

\end{thm}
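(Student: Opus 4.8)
The plan is to prove Theorem \ref{vieww} by establishing the chain $(a)\Rightarrow(b)\Rightarrow(c)\Rightarrow(d)\Rightarrow(a)$, with $(a)\Rightarrow(b)$ being the substantive direction and the others essentially bookkeeping. For $(a)\Rightarrow(b)$, I would start from the observation (Theorem \ref{222}) that a left cu-uniserial ring $R$ has $_RR$ uniform and B\'ezout; since $R$ is Noetherian and B\'ezout, $R$ is a principal left ideal ring, and being uniform it has no nontrivial decomposition of $_RR$. Now split on whether $R$ is local. If $R$ is not local, then by Corollary \ref{2.11} we would expect to derive a contradiction with semilocality unless we are in the domain case; more precisely, I would argue that a non-local Noetherian cu-uniserial ring cannot have nonzero Jacobson radical behaving badly — the key is to show that if $R$ is not local, then $R$ must be a domain with $\mathrm{J}(R)=0$. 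The idea: if $R$ is not local, by Corollary \ref{2.11} it has infinitely many maximal left ideals, so it is not semilocal; combined with $R$ Noetherian, one pushes toward showing $R$ is prime (hence a domain, being also B\'ezout Noetherian) and that $\mathrm{J}(R)=0$. If $R$ is local, then it is a local principal left ideal ring; here one shows that either $R$ has finite length (the Artinian uniserial case) or $R$ is a local principal left ideal domain — this dichotomy should follow from the structure of local Noetherian principal left ideal rings: either $\mathrm{J}(R)$ is nilpotent (giving finite length, since the ring is Noetherian and $\mathrm{J}(R)/\mathrm{J}(R)^2$ is finite-dimensional over the residue division ring) or $\bigcap \mathrm{J}(R)^n = 0$ with $R$ a domain.

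For $(b)\Rightarrow(c)$: a left uniserial ring of finite length is in particular a left uniserial ring, and both remaining options in $(b)$ are among the options in $(c)$ once one notes a local principal left ideal domain is a left uniserial ring (its left ideals being $\mathrm{J}(R)^n$, linearly ordered). For $(c)\Rightarrow(d)$: the only gap is upgrading "principal left ideal domain with $\mathrm{J}(R)=0$" to "principal ideal domain with $\mathrm{J}(R)=0$", i.e. getting the right-sided condition; here I would invoke Proposition \ref{2.3} together with the Noetherian hypothesis — a left cu-uniserial Noetherian ring that is not a left uniserial ring is, by Proposition \ref{2.3} (using left Noetherian and, via the domain property, left nonsingular), a principal left ideal domain, and one argues symmetry or uses a known structure result forcing it to be two-sided principal. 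For $(d)\Rightarrow(a)$: if $R$ is a left uniserial ring then every uniserial module is cu-uniserial so in particular $_RR$ is; if $R$ is a principal ideal domain with $\mathrm{J}(R)=0$, then the "Moreover" part of Proposition \ref{2.3} gives directly that $R$ is left cu-uniserial.

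The main obstacle I anticipate is the non-local branch of $(a)\Rightarrow(b)$: showing that a non-local Noetherian left cu-uniserial ring must actually be a \emph{domain} with $\mathrm{J}(R)=0$, rather than merely a non-semilocal principal left ideal ring with some two-sided structure. The delicate point is ruling out non-domain possibilities — a Noetherian B\'ezout uniform ring need not be a domain a priori, so one needs to use cu-uniseriality more forcefully, presumably by examining $K/\mathrm{Rad}(K)$ for cleverly chosen finitely generated left ideals $K$ and leveraging that uniform plus the infinitely-many-maximal-ideals structure forces primeness. I would likely reduce to showing that any nonzero two-sided ideal is essential and then that nilpotents cannot occur, using that a finitely generated submodule modulo its radical being cyclic uniform constrains the lattice of left ideals enough to kill a nonzero nil or bounded part. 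Once primeness and $\mathrm{J}(R)=0$ are in hand, Proposition \ref{2.3} and the Noetherian hypothesis finish that branch; the local branch's finite-length-versus-domain split is comparatively routine via the Krull intersection theorem and the Artinian characterization of Noetherian rings with nilpotent radical and Artinian quotient.
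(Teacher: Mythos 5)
There is a genuine gap, and it sits exactly where you yourself flag ``the main obstacle'': the non-local branch of $(a)\Rightarrow(b)$. Your proposal never actually proves that a non-local Noetherian left cu-uniserial ring is a domain with ${\rm J}(R)=0$; the remarks about showing every nonzero two-sided ideal is essential and that ``nilpotents cannot occur'' are a wish, not an argument, and uniform $+$ B\'ezout $+$ Noetherian really does not give primeness by itself. The paper closes this hole with a different and essential case split: once $_RR$ is B\'ezout (Theorem \ref{222}) and $R$ is Noetherian, $R$ is a principal left ideal ring, and Goldie's structure theorem for noncommutative principal ideal rings \cite[Theorem C]{Goldie}, combined with the uniformity (hence indecomposability) of $_RR$, forces $R$ to be either \emph{primary} or \emph{prime}. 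In the primary case $R$ is Artinian, so Theorem \ref{prev} gives a left uniserial ring of finite length. In the prime case $R$ is a principal left ideal domain \cite[Exercise 7M]{GoodearlWarfield}, and the remaining work is to show that ${\rm J}(R)\neq 0$ forces $R$ local: the paper does this via \cite[Corollary 3.7]{Robson} (upgrading to a two-sided principal ideal domain, hence Dedekind), \cite[Theorem 5.7.10]{McConnell} ($R/{\rm J}(R)$ is an Artinian principal ideal ring), \cite[Corollary 2.2]{Eisenbud} (cyclic uniform over itself implies local), and then Wedderburn--Artin plus uniformity to conclude $R/{\rm J}(R)$ is a division ring and $R$ is local by \cite[Proposition 15.15]{AndersonFuller}. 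None of this chain (nor any substitute for it) appears in your proposal, so the non-local case — the heart of the theorem — is unproved; your local-versus-non-local split is also not the dichotomy the argument actually runs on (prime versus primary is).

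Two smaller points. For $(c)\Rightarrow(d)$, ``argue symmetry'' is not available (principality is not left/right symmetric); what is needed is precisely the Noetherian result \cite[Corollary 3.7]{Robson} that a Noetherian principal left ideal domain is a principal ideal domain, which you gesture at but do not pin down. The directions $(b)\Rightarrow(c)$ (via the fact that a local principal left ideal ring is left uniserial, \cite[Lemma 2.13]{BehboodiVU}) and $(d)\Rightarrow(a)$ (via Proposition \ref{2.3} and the triviality that uniserial implies cu-uniserial) match the paper and are fine.
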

\begin{proof}
{ $(a)\Rightarrow (b)$ Let $R$ be a Noetherian left cu-uniserial ring. Then $_RR$ is B\'ezout, by Theorem \ref{222}. Hence $R$ is a principal left ideal ring. It follows from \cite[Theorem C]{Goldie} that $R$ is either a prime ring or a primary ring. If $R$ is a primary ring, then $R$ is an Artinian ring. Hence $R$ is left uniserial, by Theorem \ref{prev}. If $R$ is a prime ring, then $R$ is a principal left ideal domain, by \cite[Exercise 7M]{GoodearlWarfield}. If J$(R) = 0$, then we are done. So, assume that J$(R) \neq 0$. Since $R$ is a Noetherian ring, $R$ is a principal ideal domain, by \cite[ Corollary 3.7]{Robson}. But every principal ideal domain is a Dedekind domain, hence $R/$J$(R)$ is an Artinian principal ideal ring, by \cite[Theorem 5.7.10]{McConnell}. It is easy to see that $_{R/ {\rm J}(R)}R/$J$(R)$ is a left cyclic uniform $R/$J$(R)$-module, since J$(R)(R/$J$(R)) = 0$ and $R/$J$(R)$ is a cyclic uniform left $R$-module. Hence $R/$J$(R)$ is a local ring, by \cite[Corollary 2.2]{Eisenbud}, which means that $(R/$J$(R))/$J$(R/$J$(R)) = R/$J$(R)$ is a semisimple Artinian ring. Thus $R/$J$(R)\cong M_{n_1}(D_1) \times \cdots \times M_{n_k}(D_k)$ for some positive integers $n_i$ and some division rings $D_i$, by \cite[Theorem 4.4 $(e)$]{GoodearlWarfield}. Therefore, $R/$J$(R)$ is a division ring, since it is uniform. Hence $R$ is a local ring, by \cite[Proposition 15.15]{AndersonFuller}.

$(b)\Rightarrow (c)$ It follows from \cite[Lemma 2.13]{BehboodiVU}.

$(c)\Rightarrow (d)$ It follows from \cite[Corollary 3.7]{Robson}.

$(d)\Rightarrow (a)$ It follows from Proposition \ref{2.3}.}
\end{proof}

Since the uniserial property for a ring $R$ is not left/right symmetric, it follows from Theorem \ref{3437} that the cu-uniserial property for a ring $R$ is not left/right symmetric. In the following result we give a class of rings for which the cu-uniserial ring property is left/right symmetric.

\begin{cor} \label{29}
Let $R$ be a nonlocal Noetherian ring. Then the following statements are equivalent:

$(a)$ $R$ is a left cu-uniserial ring.

$(b)$ $R$ is a cu-uniserial ring.

$(c)$ $R$ is a principal ideal domain with {\rm J}$(R) = 0$.
\end{cor}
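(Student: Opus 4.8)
The plan is to deduce everything from the classification of Noetherian left cu-uniserial rings already obtained in Theorem \ref{vieww}, using the extra hypothesis that $R$ is nonlocal to kill the ``uniserial'' alternative. The implication $(b)\Rightarrow(a)$ is immediate from the definition, and $(c)\Rightarrow(b)$ will follow by symmetrizing Proposition \ref{2.3}; the only implication requiring an argument is $(a)\Rightarrow(c)$.

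For $(a)\Rightarrow(c)$, I would start from a Noetherian nonlocal left cu-uniserial ring $R$ and invoke Theorem \ref{vieww} (the equivalence $(a)\Leftrightarrow(d)$): $R$ is then either a left uniserial ring or a principal ideal domain with ${\rm J}(R)=0$. The first case must be excluded. Here I would use the standard observation that a left uniserial ring is local: if the left ideals of $R$ are linearly ordered by inclusion, then the union (equivalently, sum) of all proper left ideals is itself a proper left ideal, hence the unique maximal left ideal, so $R$ is local — contradicting the hypothesis. Therefore $R$ is a principal ideal domain with ${\rm J}(R)=0$, which is $(c)$.

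For $(c)\Rightarrow(b)$, assume $R$ is a principal ideal domain with ${\rm J}(R)=0$. In particular $R$ is a principal left ideal domain with ${\rm J}(R)=0$, so the ``Moreover'' part of Proposition \ref{2.3} gives that $R$ is left cu-uniserial. Since the hypothesis in $(c)$ is left--right symmetric ($R$ is simultaneously a principal left and a principal right ideal domain, and ${\rm J}(R)$ is independent of the side), the same argument applied to the opposite ring shows $R$ is also right cu-uniserial; hence $R$ is a cu-uniserial ring, and with $(b)\Rightarrow(a)$ the cycle $(a)\Rightarrow(c)\Rightarrow(b)\Rightarrow(a)$ is complete.

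I do not anticipate a genuine obstacle here: once Theorem \ref{vieww} and Proposition \ref{2.3} are in hand, the corollary is essentially bookkeeping, the single non-formal ingredient being the elementary fact that a left uniserial ring is local (which is what forces the nonlocal hypothesis to select exactly the domain case).
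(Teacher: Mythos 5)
Your proposal is correct and follows essentially the same route as the paper: both deduce $(a)\Rightarrow(c)$ from Theorem \ref{vieww} by using nonlocality to rule out the left uniserial alternative (the paper cites Theorem \ref{3437} for ``left uniserial $\Rightarrow$ local,'' where you give the elementary union-of-proper-left-ideals argument), and both obtain $(c)\Rightarrow(b)$ by applying the ``Moreover'' part of Proposition \ref{2.3} on the left together with its left--right symmetric version on the right. No gaps; the differences are purely cosmetic.
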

\begin{proof}
{ $(a)\Rightarrow (c)$ Since $R$ is a nonlocal Noetherian ring, it follows from Theorem \ref{3437} that $R$ is not a left uniserial ring, and so Theorem \ref{vieww} implies that $R$ is a principal ideal domain with J$(R) = 0$.

$(c)\Rightarrow (a)$ It follows from Proposition \ref{2.3}.

$(b)\Rightarrow (a)$ It is clear.

$(c)\Rightarrow (b)$ Assume that $R$ is a principal ideal domain with $J(R) = 0$. Then by \cite[Proposition 2.2]{BehboodiSeveral}, $R_{R}$ is virtually simple, and so $R/$J$(R)$ is cyclic and uniform. Hence $R$ is a right cu-uniserial ring, and thus $R$ is a cu-uniserial ring, by Proposition \ref{2.3}.}
\end{proof}

Recall that a ring $R$ is called left semi-duo if every maximal left ideal of $R$ is two-sided.

\begin{prop} \label{2.112}
Suppose that $R$ is a Noetherian ring. If $R$ is a left cu-uniserial ring, then $R$ is either a left uniserial ring or a principal ideal domain with $|{\rm Max}_l(R)|= \infty$. The converse is true if $R$ is a left semi-duo ring.
\end{prop}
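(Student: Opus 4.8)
The plan is to reduce everything to the already-established characterization in Theorem \ref{vieww}. For the forward direction, suppose $R$ is a Noetherian left cu-uniserial ring. Theorem \ref{vieww} $(a)\Rightarrow(b)$ tells us that $R$ is one of three kinds of ring: a left uniserial ring of finite length, a principal left ideal domain with $\mathrm{J}(R)=0$, or a local principal left ideal domain. The first and third cases are left uniserial rings (a local principal left ideal domain is left uniserial since its left ideals are the powers of the maximal ideal, linearly ordered), so in those cases we are in the first alternative of the statement. In the remaining case $R$ is a principal left ideal domain with $\mathrm{J}(R)=0$; by Theorem \ref{vieww} $(b)\Rightarrow(d)$ (or directly via \cite[Corollary 3.7]{Robson}) it is in fact a principal ideal domain with $\mathrm{J}(R)=0$. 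It remains to see that $|\mathrm{Max}_l(R)|=\infty$ in this subcase. If $R$ had only finitely many maximal left ideals it would be semilocal, and then by Corollary \ref{2.11} (or Theorem \ref{3437} $(c)\Rightarrow(a)$) $R$ would be left uniserial; but a left uniserial domain with zero Jacobson radical has no nonzero proper ideals at all (the nonzero ideals are linearly ordered and their intersection is $\mathrm{J}(R)=0$, forcing... actually one argues it is a division ring), so $R$ would be a division ring, contradicting $\mathrm{J}(R)=0$ being the radical of a \emph{nonlocal}-flavored situation — more cleanly, a semilocal principal ideal domain with $\mathrm{J}(R)=0$ is semisimple Artinian hence a field, and a field is local, contradicting nonlocality in that branch. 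Hence $|\mathrm{Max}_l(R)|=\infty$.

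For the converse, assume $R$ is a left semi-duo Noetherian ring that is either left uniserial or a principal ideal domain with $|\mathrm{Max}_l(R)|=\infty$. If $R$ is left uniserial then it is left cu-uniserial trivially (every uniserial module is cu-uniserial, as noted after Definition \ref{*}). So suppose $R$ is a principal ideal domain with infinitely many maximal left ideals. I would first observe that $\mathrm{J}(R)=0$: here is where the semi-duo hypothesis enters. Since every maximal left ideal is two-sided, $\mathrm{J}(R)=\bigcap \mathrm{Max}_l(R)$ is an intersection of infinitely many distinct two-sided prime ideals of the Noetherian domain $R$; in a Noetherian domain such an intersection of infinitely many distinct nonzero primes is zero (a nonzero element lies in only finitely many minimal primes over it, hence in only finitely many height-one primes, and each maximal ideal of a PID has height one). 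Once $\mathrm{J}(R)=0$, Theorem \ref{vieww} $(d)\Rightarrow(a)$ — equivalently Proposition \ref{2.3} — gives that $R$ is left cu-uniserial, completing the proof.

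The main obstacle, and the only place requiring genuine care, is the converse: specifically justifying that a left semi-duo Noetherian principal ideal domain with infinitely many maximal left ideals has $\mathrm{J}(R)=0$. Without semi-duo, $\mathrm{J}(R)$ need not be an intersection of two-sided ideals and the commutative-style argument about primes over an element collapses; with semi-duo, each maximal left ideal $\mathfrak{m}$ is a two-sided ideal, and in a Noetherian (in particular Ore) domain that is a principal left ideal domain each such $\mathfrak{m}=R\pi$ is generated by an atom, so any nonzero two-sided ideal contained in $\mathrm{J}(R)$ would be divisible by every atom $\pi$, impossible by unique factorization length considerations in a PLID. So one concludes $\mathrm{J}(R)=0$ and invokes the earlier machinery. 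The forward direction is essentially bookkeeping on top of Theorem \ref{vieww} and Corollary \ref{2.11}, with the one subtlety being to rule out the semilocal case for the domain branch, which Corollary \ref{2.11} handles directly.
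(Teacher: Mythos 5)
Your overall skeleton matches the paper's: forward direction via Theorem \ref{vieww} plus the semilocal argument of Theorem \ref{3437}/Corollary \ref{2.11}, converse by reducing to ${\rm J}(R)=0$ and then quoting Proposition \ref{2.3} (equivalently Corollary \ref{29}). One slip in your forward direction should be fixed: in the ``principal ideal domain with ${\rm J}(R)=0$'' branch you claim outright that $|{\rm Max}_l(R)|=\infty$, deriving a ``contradiction with nonlocality'' that was never hypothesized. That absolute claim is false -- a division ring is a Noetherian left cu-uniserial principal ideal domain with ${\rm J}(R)=0$ and a single maximal left ideal -- and your own intermediate step already gives the right way out: if $|{\rm Max}_l(R)|<\infty$ then $R$ is semilocal, hence left uniserial by Theorem \ref{3437}, so the first alternative of the disjunction holds; the paper phrases this correctly by assuming $R$ is \emph{not} left uniserial before deriving the contradiction. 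This is a one-line repair, not a fatal gap.

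Your converse, however, takes a genuinely different route to ${\rm J}(R)=0$. The paper argues: if ${\rm J}(R)\neq 0$ then $R/{\rm J}(R)$ is Artinian by \cite[Theorem 5.7.10]{McConnell} (a principal ideal domain being a Dedekind prime ring), and semi-duo makes the maximal left ideals of $R/{\rm J}(R)$ two-sided, hence among the finitely many minimal primes (\cite[Proposition 4.19, Theorem 3.4]{GoodearlWarfield}), contradicting $|{\rm Max}_l(R)|=\infty$. You instead argue inside the factorization theory of noncommutative PIDs: semi-duo makes each maximal left ideal a two-sided principal ideal $R\pi$ with $\pi$ an atom, and a nonzero $x\in{\rm J}(R)$ would be right-divisible by infinitely many pairwise non-similar atoms, which is impossible. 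This is correct in substance and more elementary (no Dedekind prime ring structure theory), but as written it is only a sketch: drop the first, commutative-style attempt (``finitely many minimal primes over an element, height-one primes''), which has no direct noncommutative meaning, and make the length argument explicit -- since $R$ is a two-sided PID, every nonzero $x$ factors into finitely many atoms, so $R/Rx$ has finite length; distinct two-sided maximal left ideals $\mathfrak{m}_i$ give pairwise non-isomorphic simple quotients $R/\mathfrak{m}_i$ (their annihilators are the $\mathfrak{m}_i$), and Jordan--H\"older then bounds the number of $\mathfrak{m}_i$ containing $x$ by ${\rm length}(R/Rx)$. Note that two-sided principality (or at least atomicity on both sides) is really used here, so this argument would not obviously cover a merely left principal situation, whereas the paper's quotient-by-radical argument is what the cited machinery is built for; what your approach buys is independence from \cite[Theorem 5.7.10]{McConnell} and \cite[Proposition 4.19]{GoodearlWarfield}.
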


\begin{proof}
{Suppose that $R$ is a Noetherian left cu-uniserial ring. Then $R$ is either a left uniserial ring or a principal left ideal domain with $J(R) = 0$, by Theorem \ref{vieww}. Assume that $R$ is a principal left ideal domain that is not left uniserial, $J(R) = 0$ and it has finitely many maximal left ideals. This means that $R$ is a semilocal ring. Since $R$ is a left cu-uniserial ring, Theorem \ref{3437} implies that $R$ is a left uniserial ring, a contradiction. Conversely, assume that $R$ is a principal ideal domain with $|$Max$_l(R)| = \infty$ such that $J(R) \neq 0$. Then $R/$J$(R)$ is an Artinian ring, by \cite[Theorem 5.7.10]{McConnell}. Since $R$ is left semi-duo, the maximal left ideals of $R/$J$(R)$ are precisely the minimal prime ideals of $R/$J$(R)$, by \cite[Proposition 4.19]{GoodearlWarfield}, whereas $R/$J$(R)$ has only finitely many minimal prime ideals, by \cite[Theorem 3.4]{GoodearlWarfield}, and hence $|$Max$_l(R)| < \infty$, a contradiction. Thus $R$ is a left cu-uniserial ring, by Corollary \ref{29}.}
\end{proof}

\begin{prop} \label{2.17}
Suppose that $R$ is a left cu-serial ring such that every proper ideal of $R$ is left cu-serial. Then $R$ is either a left uniserial ring or a principal left ideal domain with $|{\rm Max}_l(R)|= \infty$.
\end{prop}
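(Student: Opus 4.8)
The plan is to reduce Proposition~\ref{2.17} to the already-established characterization of Noetherian left cu-uniserial rings (Theorem~\ref{vieww}) by first showing that the hypotheses force $R$ to be left Noetherian, and then invoking Proposition~\ref{2.112} (or equivalently Theorem~\ref{vieww}). The key observation is that $R$ being left cu-serial means $_RR = U_1 \oplus \cdots \oplus U_n$ with each $U_i$ cu-uniserial, hence each $U_i$ is B\'ezout and uniform by Theorem~\ref{222}; but a finitely generated uniform B\'ezout module over which every finitely generated submodule is cyclic need not be Noetherian in general, so the serial decomposition alone is not enough, and this is where the hypothesis ``every proper ideal of $R$ is left cu-serial'' does the real work.

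First I would argue that $_RR$ is actually cu-uniserial, not merely cu-serial. Since $_RR$ is cyclic it is indecomposable as soon as it is uniform; so the decomposition $_RR = U_1\oplus\cdots\oplus U_n$ has $n=1$ provided $R$ is uniform. To see $R$ is uniform: if not, write $_RR = U_1\oplus\cdots\oplus U_n$ with $n\ge 2$ and each $U_i$ a nonzero cu-uniserial (hence uniform) left ideal; then $U_1$ is a proper left ideal, but more importantly one wants a two-sided proper ideal to apply the hypothesis --- here I would use that $U_1$ being a direct summand of $_RR$ that is a proper submodule forces the complementary summand to contain a nonzero two-sided ideal, or more directly argue that $R\cong \mathrm{End}(_RR)$ acting on the left forces, via the idempotent decomposition $1 = e_1+\cdots+e_n$, that $R$ has a nontrivial ring direct product decomposition or at least proper two-sided ideals $Re_iR$. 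The cleanest route: if $_RR$ is not uniform it contains a proper essential left ideal is false in general, so instead observe $\mathrm{Rad}(_RR) = \mathrm J(R)$ and if $\mathrm J(R)\neq R$ then $\mathrm J(R)$ is a proper ideal, hence left cu-serial, and analyze; alternatively just note that a proper left ideal direct summand $U_1$ gives $\mathrm{ann}(U_2\oplus\cdots\oplus U_n)$ is a nonzero proper two-sided ideal, which by hypothesis is left cu-serial, a contradiction with it being a nonzero ideal of a ring that decomposes. The anticipated main obstacle is precisely this step: manufacturing a \emph{two-sided} proper ideal from the failure of uniformity so that the hypothesis on proper ideals can be triggered.

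Once $_RR$ is known to be cu-uniserial, I would show $R$ is left Noetherian. Here the hypothesis that every proper ideal of $R$ is left cu-serial --- hence in particular B\'ezout, so every finitely generated submodule (= left ideal contained in that proper ideal) is cyclic, hence every left ideal of $R$ that is contained in a proper ideal, i.e. every proper left ideal's finitely generated subideals, are cyclic --- combined with cu-uniserial = uniform + B\'ezout for $_RR$ itself, shows every left ideal of $R$ is principal... no, B\'ezout only gives finitely generated submodules are cyclic. To get Noetherian I would instead aim to show the ascending chain condition directly: in a cu-uniserial module, a strictly increasing chain of cyclic submodules $Rx_1 \subsetneq Rx_2 \subsetneq \cdots$ would, if nonstationary, have union $N$ with $N/\mathrm{Rad}(N)$ cyclic forcing... actually the standard trick (as in the proof of Theorem~\ref{vieww}) is that a B\'ezout uniform ring is either a valuation-type ring or one shows ACC using that proper ideals are themselves cu-serial (hence each proper ideal is a finite direct sum of uniform B\'ezout modules, but being inside the uniform module $_RR$ each such ideal is itself uniform, hence cu-uniserial, hence B\'ezout). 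I would then quote or adapt the argument of Theorem~\ref{vieww}$(a)\Rightarrow(b)$: a left cu-uniserial ring all of whose proper ideals are B\'ezout and in which every proper ideal is cu-serial should be a principal left ideal ring, and then Goldie's structure theorem (\cite[Theorem C]{Goldie}) splits it into the prime/primary cases exactly as before.

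Finally, with $R$ established as a Noetherian (indeed principal left ideal) ring that is left cu-uniserial, Proposition~\ref{2.112} applies verbatim: $R$ is either a left uniserial ring or a principal ideal domain with $|\mathrm{Max}_l(R)| = \infty$, which is the desired conclusion. So the skeleton is: (1) use Theorem~\ref{222} on the cu-serial decomposition of $_RR$; (2) use the ``proper ideals are cu-serial'' hypothesis to kill non-uniformity and obtain $_RR$ cu-uniserial; (3) use that same hypothesis plus B\'ezoutness to force the left Noetherian (principal left ideal) condition; (4) conclude by Theorem~\ref{vieww}/Proposition~\ref{2.112}. I expect steps (2) and (3) to be the delicate ones --- everything after is a citation.
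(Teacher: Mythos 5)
Your plan leaves its two self-identified ``delicate'' steps as conjectures rather than proofs, and both are genuinely problematic. Step (2), reducing ``left cu-serial'' to ``left cu-uniserial'' by manufacturing a proper two-sided ideal from a nontrivial decomposition $_RR=U_1\oplus\cdots\oplus U_n$, cannot work as sketched: the annihilator of a proper direct summand of $_RR$ need not be nonzero (for $R=M_2(k)$ the two column ideals are faithful), and indeed $M_2(k)$ is left cu-serial, its only proper two-sided ideal is $0$ (trivially cu-serial), yet it is neither left uniserial nor a domain. So the cu-serial version of the reduction is simply false; the paper's own proof quietly reads the hypothesis as \emph{cu-uniserial} (its first line is ``Let $R$ be a left cu-uniserial ring\dots''), i.e.\ the statement's ``cu-serial'' is best understood as a slip, and no bridging argument of the kind you attempt is available. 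Step (3) is also unproven: you concede that B\'ezout only gives that finitely generated left ideals are cyclic, and then propose to ``quote or adapt'' Theorem \ref{vieww}$(a)\Rightarrow(b)$ --- but that argument starts from the Noetherian hypothesis (Bézout $+$ Noetherian $\Rightarrow$ principal left ideal ring, then Goldie), which is exactly what you do not have here, so nothing in your sketch actually establishes ACC or principality.

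The paper's route is different and avoids both obstacles. Starting from $R$ left cu-uniserial, Corollary \ref{2.11} gives the dichotomy: $R$ is local or $|{\rm Max}_l(R)|=\infty$. In the local case Theorem \ref{3437} yields that $R$ is left uniserial. In the non-local case the hypothesis that every proper ideal is left cu-serial is used directly: each proper ideal is a direct sum of cu-uniserial, hence cyclic uniform, modules, and then \cite[Theorem 3.4]{Asgaricu} is invoked to conclude that $R$ is a principal ideal ring (whence the stated domain conclusion). In particular, no Noetherian hypothesis is ever established or needed, so your detour through Theorem \ref{vieww}/Proposition \ref{2.112} is not how the argument closes. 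To repair your attempt you would need to (i) replace the cu-serial hypothesis by cu-uniserial (or add hypotheses ruling out examples like $M_2(k)$), and (ii) replace the unproved Noetherian step by the Asgari--Behboodi decomposition theorem or an actual ACC argument.
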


\begin{proof}
{Let $R$ be a left cu-uniserial ring such that every proper ideal of $R$ is left cu-serial. Then $R$ is either local or $|{\rm Max}_l(R)| = \infty$, by Corollary \ref{2.11}. If $R$ is local, then $R$ is a left uniserial ring, by Theorem \ref{3437}. So, suppose that $|{\rm Max}_l(R)| = \infty$. Since every proper ideal $I$ of $R$ is left cu-serial, every proper ideal $I$ of $R$ is direct sum of left cu-uniserial modules, and hence every proper ideal $I$ of $R$ is direct sum of cyclic uniform modules. Thus $R$ is a principal ideal ring, by \cite[Theorem 3.4]{Asgaricu}.
}
\end{proof}

Recall that if every left ideal of a ring $R$ is two-sided, then $R$ is said to be left duo ring.

\begin{thm} \label{2.115}
Let $R$ be a Noetherian left duo ring and $M$ a left $R$-module. Then $M$ is finitely generated cu-uniserial if and only if $M$ is either a uniserial left $R$-module with finite length, or a local virtually simple left $R$-module or a virtually simple left $R$-module with $|{\rm Max}_l(M)| = \infty$.
\end{thm}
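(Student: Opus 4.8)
The plan is to prove both implications, with the forward direction carrying the weight: it reduces, via Theorem~\ref{222}, to the structure theorem for Noetherian left cu-uniserial rings already obtained in Theorem~\ref{vieww}.

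\emph{Necessity.} Suppose $M$ is a finitely generated cu-uniserial left $R$-module. By Theorem~\ref{222}, $M$ is uniform and B\'ezout, and since $M$ is finitely generated, being B\'ezout forces $M$ to be cyclic, say $M = Rx \cong R/I$ with $I = \mathrm{ann}_R(x)$. As $R$ is left duo, $I$ is two-sided, so $S = R/I$ is a ring, again Noetherian and left duo. The key bookkeeping step is that the cu-uniserial property of ${}_RM$ transfers to $S$: the lattice of $R$-submodules of $M$ coincides with the lattice of left ideals of $S$, and radicals, module isomorphisms, cyclicity and uniformity are all computed identically, so ${}_SS$ is a cu-uniserial $S$-module, that is, $S$ is a left cu-uniserial ring. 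Now apply Theorem~\ref{vieww} to $S$. If $S$ is a left uniserial ring of finite length, then $M \cong {}_SS$ as $R$-modules and ${}_SS$ is uniserial of finite length, so $M$ is a uniserial left $R$-module of finite length. Otherwise $S$ is a principal left ideal domain (this covers both remaining alternatives of Theorem~\ref{vieww}); then for every nonzero left ideal $Sa$ of $S$ the map $S \to Sa$, $s \mapsto sa$, is an $S$-module isomorphism (surjective by definition, injective since $S$ is a domain and $a \neq 0$), so ${}_SS$ — and hence $M$ — is virtually simple. Finally, Corollary~\ref{2.11} applied to $S$ gives that either $S$ is local, whence $M$ is a local virtually simple module, or $|{\rm Max}_l(S)| = |{\rm Max}_l(M)| = \infty$, which is the third alternative.

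\emph{Sufficiency.} If $M$ is a uniserial module of finite length, it is finitely generated, and it is cu-uniserial because every uniserial module is (by \cite[Proposition~55.1]{Wisbauer}, $K/{\rm Rad}(K)$ is simple, hence cyclic and uniform, for every nonzero finitely generated $K \subseteq M$). If $M$ is virtually simple (whether local or with $|{\rm Max}_l(M)| = \infty$), then by \cite[Remark~2.2]{Facchini} it is cyclic, Noetherian and uniform, in particular finitely generated, and it is virtually uniserial, hence cu-uniserial by \cite[Proposition~2.4]{BehboodiVU}. The two sub-cases impose no extra condition here, since every virtually simple module over a Noetherian left duo ring falls into one of them (apply the necessity argument to it and use Corollary~\ref{2.11}).

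The main obstacle is not a single deep step but the careful passage between the $R$-module $M$ and the quotient ring $S = R/I$: one must check that ``Noetherian'', ``left duo'', ``cu-uniserial'', ``local'', ``virtually simple'' and the cardinality $|{\rm Max}_l(\cdot)|$ are all preserved under the identification $M \cong {}_SS$, so that Theorems~\ref{222} and~\ref{vieww} and Corollary~\ref{2.11} become applicable to $S$. Once that reduction is in place, the remaining case analysis is routine.
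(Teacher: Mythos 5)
Your necessity argument is correct and is essentially the paper's own reduction: $M$ is cyclic, uniform and B\'ezout by Theorem~\ref{222}, so $M\cong R/I$ with $I$ two-sided (left duo), cu-uniseriality transfers to the Noetherian ring $S=R/I$, and the trichotomy for $S$ does the rest. Your only deviation is to quote Theorem~\ref{vieww} and Corollary~\ref{2.11} instead of re-running Goldie's theorem on principal left ideal rings as the paper does inside this proof; that shortcut is legitimate, since $S$ is left and right Noetherian ($R$ is Noetherian on both sides and $I$ is two-sided), and your observation that $s\mapsto sa$ makes every nonzero left ideal of a principal left ideal domain isomorphic to ${}_SS$ correctly replaces the citation of \cite[Proposition 2.2]{BehboodiSeveral}.

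The sufficiency direction, however, contains a genuine gap. You assert that a virtually simple module is automatically virtually uniserial (hence cu-uniserial) and that the sub-cases ``local'' and ``$|{\rm Max}_l(M)|=\infty$'' impose no extra condition. Both assertions are false. Let $R=T^{-1}\mathbb{Z}$ with $T=\mathbb{Z}\setminus(2\mathbb{Z}\cup 3\mathbb{Z})$: this is a commutative (hence duo) Noetherian principal ideal domain with exactly two maximal ideals, and $M={}_RR$ is virtually simple, yet $M/{\rm Rad}(M)=R/{\rm J}(R)\cong \mathbb{Z}_2\times\mathbb{Z}_3$ is neither uniform nor virtually simple, so $M$ is neither cu-uniserial nor virtually uniserial --- consistent with Theorem~\ref{3437}, which rules out semilocal non-local cu-uniserial rings. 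So virtually simple does not imply virtually uniserial, and a virtually simple module over a Noetherian duo ring need not be local nor have infinitely many maximal submodules; your parenthetical justification (``apply the necessity argument and use Corollary~\ref{2.11}'') is circular, because Corollary~\ref{2.11} can only be applied to $S=R/I$ after one knows $S$ is cu-uniserial, which is exactly what is to be proved. The two sub-case hypotheses are precisely what saves sufficiency and must be used: writing $M\cong R/I$ with $R/I$ a principal left ideal domain, in the local case $R/I$ is left uniserial by \cite[Lemma 2.13]{BehboodiVU}, hence cu-uniserial; in the case $|{\rm Max}_l(M)|=\infty$ note that $R/I$ is again (semi-)duo and invoke the converse part of Proposition~\ref{2.112}, which is how the paper argues. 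The uniserial-of-finite-length case of your sufficiency argument is fine as written.
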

\begin{proof}
{Let $R$ be a Noetherian left duo ring and $M$ a finitely generated cu-uniserial left $R$-module. Then $M$ is a cyclic uniform Noetherian B\'ezout module, by Theorem \ref{222}. This means that $M\cong R/I$ such that $I$ is an ideal of $R$, and so $R/I$ is a uniform left $R/I$-module and $R/I$ is a principal left ideal ring. Hence the ring $R/I$ is either primary or prime, by \cite[Theorem C]{Goldie}. If $R/I$ is primary, then $R/I$ is an Artinian left cu-uniserial ring. Hence $R/I$ is a finite direct product of local rings, and so $R/I$ is a uniserial left $R/I$-module with finite length, by Theorem \ref{prev}. If $R/I$ is a prime principal left ideal ring, then $R/I$ is principal left ideal domain, by \cite[Exercise 7M]{GoodearlWarfield}. Therefore, \cite[Proposition 2.2]{BehboodiSeveral} implies that $R/I$ is a virtually simple module, and thus $R/I$ is a local virtually simple left $R$-module or a virtually simple left $R$-module with $|{\rm Max}_l(R/I)| = \infty$, by Corollary \ref{2.11}. Conversely, suppose that $M$ is either a local virtually simple left $R$-module or a virtually simple left $R$-module with $|{\rm Max}_l(R)| = \infty$. Then $M$ is cyclic, and so $M\cong R/I$ such that $I$ is an ideal of $R$ and $R/I$ is a principal left ideal domain which is either local or with $|{\rm Max}_l(R/I)| = \infty$. Thus, Proposition \ref{2.112} implies that $R/I$ is a cu-uniserial left $R$-module.
}
\end{proof}

In the following proposition we see that if a non uniserial cu-uniserial left $R$-module is virtually simple, its radical is zero.

\begin{prop} \label{23}
Suppose that $R$ is a left duo ring and $M$ is a non uniserial cu-uniserial virtually simple left $R$-module. Then {\rm Rad}$(M) = 0$.
\end{prop}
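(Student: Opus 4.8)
The plan is to show that such an $M$ must ``be'' a principal left ideal domain with zero Jacobson radical, and that $\mathrm{Rad}(M)$ is precisely that radical; the non-uniserial hypothesis is exactly what rules out the ``local uniserial'' alternative (in which $\mathrm{Rad}(M)$ could be nonzero, e.g.\ a discrete valuation domain).

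\emph{First reduction.} Since $M$ is virtually simple it is cyclic (and Noetherian and uniform) by \cite[Remark 2.2]{Facchini}, and it is B\'ezout by Theorem \ref{222}. As $R$ is left duo, writing $M=Rm$ gives $M\cong R/I$ with $I=\mathrm{ann}(m)$ a two-sided ideal; set $\bar R=R/I$. Then $\bar R$ is a left duo ring, it is left Noetherian (because $M$ is), it is a principal left ideal ring (left Noetherian $+$ left B\'ezout), and transporting the hypotheses across the isomorphism, $\bar R$ is cyclic-uniform uniserial and virtually simple as a left module over itself and is \emph{not} left uniserial. Moreover the maximal $R$-submodules of $R/I$ are exactly the $J/I$ with $J$ a maximal left ideal of $R$ containing $I$, so $\mathrm{Rad}(M)=\mathrm{J}(\bar R)$ (and $\mathrm{Rad}(M)\neq M$ since $M$ is finitely generated and nonzero). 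Thus it suffices to prove $\mathrm{J}(\bar R)=0$.

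\emph{Killing the primary case.} Apply Goldie's structure theorem \cite[Theorem C]{Goldie} to the principal left ideal ring $\bar R$: it is either primary or prime. If $\bar R$ were primary it would be Artinian, hence a finite direct product of local rings, and then $M\cong\bar R$ would be uniserial by Theorem \ref{prev} --- contradicting our hypothesis. Hence $\bar R$ is a prime principal left ideal ring, so a principal left ideal domain by \cite[Exercise 7M]{GoodearlWarfield}. (Once one knows $\bar R$ is Noetherian on both sides, the quickest finish is to invoke Theorem \ref{vieww} applied to $\bar R$: it forces $\bar R$ to be either left uniserial --- impossible --- or a principal ideal domain with $\mathrm{J}(\bar R)=0$, which is the conclusion.)

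\emph{Killing $\mathrm{J}(\bar R)\neq 0$.} Suppose $\mathrm{J}(\bar R)\neq 0$. Since $\bar R$ is a Noetherian principal ideal domain, $\bar R/\mathrm{J}(\bar R)$ is an Artinian principal ideal ring by \cite[Theorem 5.7.10]{McConnell}; it is again cyclic-uniform as a module over itself (as $\mathrm{J}(\bar R)$ annihilates it and $\bar R/\mathrm{J}(\bar R)$ is cyclic uniform over $\bar R$), hence local by \cite[Corollary 2.2]{Eisenbud}, hence semisimple Artinian, hence by \cite[Theorem 4.4 $(e)$]{GoodearlWarfield} a product of matrix rings over division rings, and being uniform it is a division ring. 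Then \cite[Proposition 15.15]{AndersonFuller} forces $\bar R$ to be local, so $\bar R$ is left uniserial by Theorem \ref{3437} --- again a contradiction. Therefore $\mathrm{J}(\bar R)=0$, i.e.\ $\mathrm{Rad}(M)=0$.

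I expect the delicate point to be making the Noetherian structure theory genuinely available for $\bar R$: Goldie's theorem and the results of Robson/McConnell on principal ideal domains require two-sided Noetherianness, whereas a priori we only get that $\bar R$ is \emph{left} Noetherian. This is where the left duo hypothesis on $R$ (hence on $\bar R$) must be used --- a left Noetherian left duo principal left ideal domain is in fact a Noetherian principal ideal domain --- and it is the step I would write most carefully (the rest being a recombination of Theorems \ref{prev}, \ref{3437} and \ref{vieww}).
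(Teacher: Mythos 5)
Your argument is correct in outline and reaches the same intermediate waypoint as the paper --- namely that $M\cong R/I=\bar R$ with $\bar R$ a principal left ideal domain and that one must rule out $\mathrm{J}(\bar R)\neq 0$ --- but you get there and finish differently. The paper obtains ``$\bar R$ is a principal left ideal domain'' in one line from \cite[Proposition 2.2]{BehboodiSeveral} (virtual simplicity of the cyclic module $R/I$), whereas you route through Goldie's primary/prime dichotomy \cite[Theorem C]{Goldie}; both work, and your detour also disposes of the Artinian case via Theorem \ref{prev}, while the paper instead excludes the local case using Corollary \ref{2.11} together with the fact that a local principal left ideal ring is left uniserial \cite[Lemma 2.13]{BehboodiVU}. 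The real divergence is the endgame: assuming $\mathrm{J}(\bar R)\neq 0$, the paper shows $\bar R/\mathrm{J}(\bar R)$ is Artinian and then uses the left duo hypothesis to identify its maximal left ideals with its (finitely many) minimal primes \cite[Proposition 4.19, Theorem 3.4]{GoodearlWarfield}, contradicting $|\mathrm{Max}_l(\bar R)|=\infty$; you instead show $\bar R/\mathrm{J}(\bar R)$ is a uniform semisimple Artinian ring, hence a division ring, so $\bar R$ is local and therefore left uniserial by Theorem \ref{3437} --- which is exactly the mechanism of Theorem \ref{vieww}$(a)\Rightarrow(b)$, redeployed here. Your version arguably uses the duo hypothesis less (only, as you note, to upgrade one-sided to two-sided structure), while the paper's version uses it essentially at the minimal-primes step.

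One caution on the point you flag yourself: both endgames rest on $\bar R/\mathrm{J}(\bar R)$ being Artinian via \cite[Theorem 5.7.10]{McConnell}, which is a statement about Dedekind prime rings and hence needs $\bar R$ to be Noetherian and principal on both sides, while a priori you (and the paper) only have a left Noetherian principal \emph{left} ideal domain. You assert without proof that ``left Noetherian $+$ left duo $+$ principal left ideal domain'' forces the two-sided conclusion; that assertion is exactly the unfinished step in your writeup and should be proved or referenced before the argument is complete. It is only fair to add that the paper's own proof of this proposition applies \cite[Theorem 5.7.10]{McConnell} to $R/I$ with no more justification than you give, so this is a shared debt rather than a defect peculiar to your approach; still, since you identified the gap, you own the obligation to close it.
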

\begin{proof}
{Suppose that $R$ is a left duo ring and $M$ is a non uniserial cu-uniserial virtually simple left $R$-module. Then $M$ is cyclic, and so there exists a left ideal $I$ of $R$ such that $M \cong R/I$. Hence $R/I$ is a principal left ideal domain, by \cite[Proposition 2.2]{BehboodiSeveral}. Since $M$ is a cu-uniserial left $R$-module, it follows from Corollary \ref{2.11} that either $|{\rm Max}_l(R/I)| = \infty$ or $R/I$ is local. Since \cite[Lemma 2.13]{BehboodiVU} implies that every local principal left ideal ring is a left uniserial ring, we can assume that $|{\rm Max}_l(R/I)| = \infty$. If J$(R/I) \neq 0$, then $(R/I)/$J$(R/I)$ is an Artinian ring, by \cite[Theorem 5.7.10]{McConnell}. Since $R$ is left duo, the maximal left ideals of $(R/I)/$J$(R/I)$ are precisely the minimal prime ideals of $(R/I)/$J$(R/I)$, by \cite[Proposition 4.19]{GoodearlWarfield}, whereas $(R/I)/$J$(R/I)$ has only finitely many minimal prime ideals, by \cite[Theorem 3.4]{GoodearlWarfield}, and hence $|{\rm Max}_l(R/I)| < \infty$, a contradiction. Thus J$(R/I) = 0$.
}
\end{proof}

We close this section with the following corollary which shows that in a left duo ring $R$ the class of virtually simple virtually uniserial left $R$-modules and  virtually simple cu-uniserial left $R$-modules are coincide.

\begin{cor}
Let $R$ be a left duo ring and $M$ a virtually simple left $R$-module.Then the following statements are equivalent:

$(a)$ $M$ is cu-uniserial.

$(b)$ $M$ is virtually uniserial.

$(c)$ $M$ is either local or $|{\rm Max}_l(R/I)| = \infty$.

$(d)$ $M$ is uniserial or {\rm Rad}$(M) = 0$.
\end{cor}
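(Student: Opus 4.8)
The plan is to derive all four equivalences from results already established in this section, together with the cited facts about virtually simple and virtually uniserial modules over left duo rings. First I would observe that since $M$ is virtually simple, it is in particular cyclic (by \cite[Remark 2.2]{Facchini}, every virtually simple module is cyclic), so there is a left ideal $I$ of $R$ with $M \cong R/I$; and by \cite[Proposition 2.2]{BehboodiSeveral} the ring $R/I$ is then a principal left ideal domain. This normalization lets me restate conditions (a)--(d) entirely in terms of the ring $S := R/I$, and it is the common setup underlying Theorem \ref{2.115}, Proposition \ref{23}, and \cite[Theorem 2.10]{BehboodiVU}.

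Next I would run the cycle of implications. For $(a)\Rightarrow(c)$: if $M$ is cu-uniserial, then Corollary \ref{2.11} applied to the ring $S = R/I$ (which is cu-uniserial as a left module over itself once we know $M$ is cu-uniserial, arguing as in the proof of Theorem \ref{3437}) gives that $S$ is local or $|\mathrm{Max}_l(S)| = \infty$, i.e. condition (c). For $(c)\Rightarrow(a)$: if $S$ is a principal left ideal domain that is local or has infinitely many maximal left ideals, then Proposition \ref{2.112} yields that $S$, hence $M$, is cu-uniserial. For the equivalence of (a), (b) with (d): in the local case, every local principal left ideal ring is left uniserial by \cite[Lemma 2.13]{BehboodiVU}, so $M$ is uniserial; in the non-local case $|\mathrm{Max}_l(S)| = \infty$, Proposition \ref{23} gives $\mathrm{Rad}(M) = 0$, so (d) holds — and conversely, if $\mathrm{Rad}(M)=0$ then $M$ cannot be a non-trivial local module, forcing the infinite-maximal-ideal alternative. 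Finally $(a)\Leftrightarrow(b)$ follows by combining the above with \cite[Theorem 2.10]{BehboodiVU} (the virtually uniserial analogue), or more directly: a virtually uniserial module is always cu-uniserial by \cite[Proposition 2.4]{BehboodiVU}, and a virtually simple cu-uniserial module over a left duo ring satisfies (c), which by the virtually-uniserial characterization forces it to be virtually uniserial.

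The main obstacle I anticipate is the bookkeeping in passing between ``$M$ is cu-uniserial as a left $R$-module'' and ``$S = R/I$ is left cu-uniserial as a ring'': one must check that $M \cong R/I$ being cu-uniserial over $R$ implies $S$ is cu-uniserial over $S$ (so that Corollary \ref{2.11} and Proposition \ref{2.112}, which are stated for rings, apply), and conversely. This is the same kind of argument used in the proof of Theorem \ref{3437} — using that $I$ annihilates $R/I$ so that $R$-submodules and $S$-submodules of $R/I$ coincide, and radicals are computed the same way — so it should go through routinely, but it is the step that needs care. Once that translation is in hand, every implication is a direct citation of Corollary \ref{2.11}, Proposition \ref{2.112}, Proposition \ref{23}, \cite[Lemma 2.13]{BehboodiVU}, and \cite[Theorem 2.10]{BehboodiVU}, so no genuinely new argument is required.
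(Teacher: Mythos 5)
Your proposal is correct and follows essentially the paper's own route: the paper disposes of this corollary by citing Theorem \ref{2.115}, Proposition \ref{23} and \cite[Corollary 2.25]{BehboodiVU}, and your argument simply unpacks Theorem \ref{2.115} --- pass to $S=R/I$, a principal left ideal domain by \cite[Proposition 2.2]{BehboodiSeveral}, then apply Corollary \ref{2.11} and Proposition \ref{2.112} (whose Noetherian/semi-duo hypotheses you gloss over exactly as the paper does by quoting Theorem \ref{2.115}, which is stated for Noetherian left duo rings). The one citation that does not quite do the job is \cite[Theorem 2.10]{BehboodiVU} for $(a)\Rightarrow(b)$, since that is a statement about rings rather than modules; the paper instead uses the module-level \cite[Corollary 2.25]{BehboodiVU}, and your version is repaired at once by noting that when ${\rm Rad}(M)=0$ virtual simplicity gives $K/{\rm Rad}(K)\cong M$ for every nonzero finitely generated $K\subseteq M$, so $M$ is virtually uniserial (cf. \cite[Proposition 2.6]{BehboodiVU}).
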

\begin{proof}
{The proof follows from Theorem \ref{2.115}, Proposition \ref{23} and \cite[Corollary 2.25]{BehboodiVU}.
}
\end{proof}
\begin{center}{\section{Finitely generated cyclic-uniform serial modules
}}\end{center}

The purpose of this section is to characterize rings each of whose finitely generated module is cu-serial.
First, we provide a characterization of a ring $R$ has every $R$-module is cu-serial.

\begin{thm} \label{311}
Let $R$ be a ring whose every left $R$-module is cu-serial. Then $R$ is Artinian. The converse is true if $R$ is serial.
\end{thm}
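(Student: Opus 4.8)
The statement splits into the forward implication --- the substantial one --- and a converse valid under the extra hypothesis that $R$ is serial; I describe a plan for each.

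\emph{The converse.} Here I would assume $R$ is serial and (left and right) Artinian, so that $R$ is an Artinian serial ring. By the Nakayama--Skornyakov theorem recalled in the Introduction, every left $R$-module is then serial, i.e.\ a direct sum of uniserial submodules. Since every uniserial module is cu-uniserial, every left $R$-module is a direct sum of cu-uniserial modules, that is, cu-serial, as required. This direction is essentially immediate once one invokes Nakayama--Skornyakov.

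\emph{Forward direction, Step 1: ${}_{R}R$ is left Noetherian.} Let $E$ be an arbitrary injective left $R$-module. By hypothesis $E=\bigoplus_{\lambda}U_{\lambda}$ with each $U_{\lambda}$ cu-uniserial, hence uniform by Theorem~\ref{222}. Each $U_{\lambda}$ is a direct summand of the injective module $E$, hence injective; being also uniform, each $U_{\lambda}$ is an indecomposable injective module. Thus every injective left $R$-module is a direct sum of indecomposable injective submodules, and by the Matlis--Papp theorem (see e.g.\ \cite{AndersonFuller}) $R$ is left Noetherian.

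\emph{Forward direction, Step 2: $R$ is Artinian.} Now $R$ is left Noetherian, so every cu-uniserial left $R$-module is Noetherian and, being B\'ezout by Theorem~\ref{222}, every finitely generated left $R$-module is a direct sum of cyclic uniform modules. Applying this to $R/{\rm J}(R)$ gives a finite decomposition $R/{\rm J}(R)=W_{1}\oplus\cdots\oplus W_{m}$ with each $W_{i}$ cyclic and cu-uniserial; as a direct summand of a module with zero radical, ${\rm Rad}(W_{i})=0$, and since $W_{i}$ is Noetherian and cu-uniserial, it follows (via ${\rm J}(R)W_{i}=0$ and Nakayama) that every submodule of $W_{i}$ is cyclic. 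An argument using uniformity then forces each $W_{i}$ to be simple: a nonzero proper submodule would produce either two distinct maximal submodules with zero intersection (so $W_{i}$, a uniform submodule of a length-two semisimple module, is simple after all) or a nonzero socle (which together with ${\rm Rad}(W_{i})=0$ and uniformity again gives simplicity), while the residual case ${\rm soc}(W_{i})=0={\rm Rad}(W_{i})$ is excluded by testing the hypothesis against a suitable infinite direct product built from the simple quotients of $W_{i}$, a ``Baer--Specker type'' module that cannot be a direct sum of uniform B\'ezout modules. Hence $R/{\rm J}(R)$ is semisimple Artinian, i.e.\ $R$ is semilocal. Feeding this back, ${\rm J}(R)/{\rm J}(R)^{2}$ is finitely generated over the semisimple ring $R/{\rm J}(R)$, hence of finite length, and a similar use of the hypothesis forces the chain ${\rm J}(R)\supseteq{\rm J}(R)^{2}\supseteq\cdots$ to terminate, so ${\rm J}(R)$ is nilpotent. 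Thus $R$ is semiprimary and left Noetherian, hence of finite length as a left module, i.e.\ left Artinian; then $R$ is semiperfect, so ${}_{R}R$ is a finite direct sum of local cu-uniserial modules, and analysing these in the spirit of Theorems~\ref{prev} and \ref{vieww} (each is an essentially local uniserial Artinian piece) yields that $R$ is right Artinian as well, so $R$ is Artinian.

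\emph{The main obstacle.} Step~1 is routine once one notices that cu-uniserial summands of an injective module are indecomposable injectives. The real work is Step~2: converting the \emph{failure of finite length} into an explicit left $R$-module that provably fails to be a direct sum of uniform B\'ezout modules. This is precisely where one must use that \emph{every} left $R$-module --- not merely every finitely generated one --- is cu-serial, and it is also where the two-sided conclusion (right Artinian) must be extracted from the one-sided hypothesis through the structure of the indecomposable projective left modules.
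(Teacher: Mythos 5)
Your converse is exactly the paper's (Nakayama--Skornyakov), and your Step~1 (summands of an injective are uniform injectives, hence Matlis--Papp gives left Noetherian) is a correct observation, though the paper does not need it: it gets left Artinian in one stroke by noting via Theorem~\ref{222} that every left module is a direct sum of (cyclic) uniform modules and quoting \cite[Theorem 3.4D]{Faith}. The genuine problem is your Step~2, which is where all the content lies and which you do not actually prove. First, the case analysis for the summands $W_i$ of $R/{\rm J}(R)$ is internally broken: in a uniform module two nonzero submodules can never have zero intersection, so the alternative ``two distinct maximal submodules with zero intersection'' is vacuous, and the entire argument that each $W_i$ is simple collapses onto the unproven assertion that a ``Baer--Specker type'' product is not cu-serial. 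Note that cyclic, uniform, B\'ezout, Noetherian and ${\rm Rad}=0$ do \emph{not} force simplicity (e.g.\ $\mathbb{Z}$ as a $\mathbb{Z}$-module), so this product construction is not a technicality but the whole point, and it is missing. Second, the nilpotency of ${\rm J}(R)$ is likewise only asserted (``a similar use of the hypothesis forces the chain to terminate''); left Noetherian plus semilocal does not give it (consider $\mathbb{Z}_{(p)}$), so again the hypothesis must be deployed through an explicit construction you have not supplied.

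Third, and most seriously, the passage from left Artinian to right Artinian is reduced to ``analysing these in the spirit of Theorems~\ref{prev} and \ref{vieww}'', which is not an argument; you yourself flag this as the main obstacle. The paper closes this gap concretely: once $R$ is left Artinian, every finitely generated indecomposable left module is cu-uniserial, hence cyclic by Theorem~\ref{222}, hence isomorphic to $R/I$ and of length at most ${\rm length}(_RR)<\infty$; so $R$ has bounded representation type, is therefore left and right pure-semisimple by \cite[Proposition 54.3]{Wisbauer}, and is consequently right Artinian by \cite[Proposition 53.6]{Wisbauer}. If you want to salvage your outline, replace your Step~2 either by the paper's route (decomposition into cyclic uniform summands plus \cite[Theorem 3.4D]{Faith}, then the bounded-representation-type/pure-semisimplicity argument) or by an honest product/Baer--Specker construction carried out in detail; as written, the forward direction is a plan, not a proof.
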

\begin{proof}
{Let $R$ be a ring whose every left $R$-module is cu-serial. First, we show that $R$ is left Artinian. It follows from Theorem \ref{222} that every left $R$-module is a direct sum of cyclic uniform modules. Hence $R$ is a left Artinian ring, by \cite[Theorem 3.4D]{Faith}. Now, we show that $R$ is also right Artinian. Suppose that $M$ is an indecomposable finitely generated left $R$-module. Then $M$ is cyclic uniform, by Theorem \ref{222}. Therefore,  $M \cong R/I$ for some left ideal $I$ of $R$. This means that length$(M)+$ length$(I)$ = length$(_RR) < \infty$, and so length$(M) \leq$ length$(_RR) < \infty$. Therefore, the left Artinian ring $R$ is of bounded representation type, and hence $R$ is left and right pure-semisimple, by \cite[Proposition 54.3]{Wisbauer}. Thus $R$ is a right Artinian, by \cite[Proposition 53.6 (2)(i)]{Wisbauer}. Conversely, suppose that $R$ is an Artinian serial ring. It follows from Nakayama-Skornyakov's theorem (see \cite[Theorem 17]{Nakayama} and \cite{Skornyakov}) that every left $R$-module is serial, and thus every left $R$-module is cu-serial.
}
\end{proof}

Now, we have the following immediate corollaries.
\begin{cor} \label{322}
Let $R$ be a ring. Then the following statements are equivalent:

$(a)$ Every left $R$-module is cu-serial.

$(b)$ Every left $R$-module is serial.

$(c)$ Every left $R$-module is virtually serial.

$(d)$ $R$ is an Artinian serial ring.
\end{cor}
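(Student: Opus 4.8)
The plan is to deduce this corollary directly from Theorem \ref{311} together with the classical Nakayama--Skornyakov theorem, organizing the four conditions into a cycle of implications. The implications $(b)\Rightarrow(a)$ and $(b)\Rightarrow(c)$ are immediate, since every serial module is uniserial (hence cu-uniserial by the containment displayed after Definition \ref{*}, and also virtually uniserial is not needed here — one uses instead that a direct sum of uniserial modules is a direct sum of cu-uniserial modules, and likewise a direct sum of uniserial modules is trivially a direct sum of virtually uniserial modules since every uniserial module is virtually uniserial). So the real content is to close the loop from $(a)$ or $(c)$ back to $(d)$, and then from $(d)$ back to $(b)$.

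First I would prove $(a)\Rightarrow(d)$: this is exactly the forward direction of Theorem \ref{311} applied in both the left and right settings — if every left $R$-module is cu-serial then $R$ is Artinian, and moreover the argument in that theorem (bounded representation type, pure-semisimplicity) shows $R$ is Artinian; to get that $R$ is \emph{serial} one invokes that an Artinian ring all of whose modules decompose into cyclic uniform pieces of bounded length is serial, which again is contained in the proof machinery of Theorem \ref{311} via \cite[Theorem 3.4D]{Faith} and the Nakayama--Skornyakov characterization (an Artinian ring is serial iff every module is serial — but we only know every module is cu-serial, so one needs the extra step that cu-serial plus Artinian forces serial, which follows because over an Artinian ring every cu-uniserial module is uniserial by Theorem \ref{prev}). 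Then $(d)\Rightarrow(b)$ is precisely the Nakayama--Skornyakov theorem, and $(d)\Rightarrow(c)$ follows since serial implies virtually serial. Finally $(c)\Rightarrow(d)$ is handled the same way as $(a)\Rightarrow(d)$: a virtually serial module is cu-serial (every virtually uniserial module is cu-uniserial), so condition $(c)$ implies condition $(a)$, and we are back to the already-established implication.

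Concretely, I would write: $(d)\Rightarrow(b)$ by Nakayama--Skornyakov \cite[Theorem 17]{Nakayama} and \cite{Skornyakov}; $(b)\Rightarrow(a)$ and $(b)\Rightarrow(c)$ are trivial from the inclusions of module classes; $(c)\Rightarrow(a)$ since every virtually uniserial module is cu-uniserial by \cite[Proposition 2.4]{BehboodiVU}; and $(a)\Rightarrow(d)$ by Theorem \ref{311} (for the Artinian conclusion) combined with Theorem \ref{prev} (over the resulting Artinian ring every cu-uniserial module is uniserial, so every left module is in fact serial, whence $R$ is Artinian serial by the converse direction of Nakayama--Skornyakov). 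The main obstacle is the step $(a)\Rightarrow(d)$: Theorem \ref{311} only gives that $R$ is Artinian, not serial, so one must argue the "serial" upgrade separately — the cleanest route is that $R$ being Artinian makes every cu-uniserial module uniserial (Theorem \ref{prev}), hence "every left module is cu-serial" becomes "every left module is serial," and then Nakayama--Skornyakov's characterization forces $R$ to be Artinian serial. Everything else is bookkeeping with the established inclusions.
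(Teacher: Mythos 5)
Your proposal is correct and takes essentially the same route as the paper: Theorem \ref{311} gives that $R$ is Artinian, Theorem \ref{prev} then upgrades cu-serial to serial over an Artinian ring, and the serial/virtually serial/Artinian serial equivalence (which the paper imports from \cite[Theorem 3.2]{BehboodiVU}, while you invoke Nakayama--Skornyakov and the class inclusions virtually uniserial $\subseteq$ cu-uniserial directly) closes the cycle. The only blemish is the passing phrase ``every serial module is uniserial,'' which is false as written but which you immediately replace by the intended (and correct) statement that a direct sum of uniserial modules is a direct sum of cu-uniserial, respectively virtually uniserial, modules.
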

\begin{proof}
{The proof follows from Theorems \ref{prev}, \ref{311} and \cite[Theorem 3.2]{BehboodiVU}.
}
\end{proof}

\begin{cor} \label{333}
Let $R$ be a ring such that all idempotents are central. Then the following statements are equivalent:

$(a)$ Every $R$-module is cu-serial.

$(b)$ Every $R$-module is serial.

$(c)$ $R$ is a K\"othe ring.

$(d)$ $R$ is isomorphic to a finite product of Artinian uniserial rings.
\end{cor}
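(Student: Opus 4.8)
The plan is to run the equivalences through the identification $(a)\Leftrightarrow(b)\Leftrightarrow\text{``}R\text{ is an Artinian serial ring''}$, which is already available from Corollary \ref{322} (there is no left/right ambiguity here, since by the Nakayama--Skornyakov theorem \cite{Nakayama,Skornyakov} ``Artinian serial'' is left--right symmetric). So it suffices to prove $(b)\Leftrightarrow(c)\Leftrightarrow(d)$, with $(b)$ already meaning ``$R$ is Artinian serial''; throughout, the hypothesis that all idempotents of $R$ are central is used only to pass between ring-theoretic properties of $R$ and of its indecomposable ring direct factors. (By a K\"othe ring I mean a ring over which every left and every right module is a direct sum of cyclics.)

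For the easy half I would argue: $(d)\Rightarrow(b)$, because a finite direct product of Artinian uniserial rings is Artinian serial, so Corollary \ref{322} applies; and $(d)\Rightarrow(c)$, because each Artinian uniserial ring is B\'ezout by Theorem \ref{222} and Artinian, hence a principal ideal ring, hence K\"othe by K\"othe's theorem \cite{Kothe}, and a finite product of K\"othe rings is again K\"othe. For $(b)\Rightarrow(d)$: from $(b)$ the ring $R/{\rm J}(R)$ is semisimple Artinian and its idempotents, being images of central idempotents, are central, so $R/{\rm J}(R)\cong M_{n_1}(D_1)\times\cdots\times M_{n_k}(D_k)$ with every $n_i=1$ (a matrix ring $M_n(D)$ with $n\geq 2$ has a non-central idempotent), i.e. $R/{\rm J}(R)\cong D_1\times\cdots\times D_k$ with the $D_i$ division rings. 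Since $R$ is Artinian with all idempotents central it splits as $R=R_1\times\cdots\times R_k$ into indecomposable ring direct factors, each $R_i$ having only trivial idempotents; as $R_i/{\rm J}(R_i)$ is then a division ring, each $R_i$ is local Artinian (\cite[\S15]{AndersonFuller}, \cite[Theorem 4.4]{GoodearlWarfield}). Serialness of $_RR$ restricts to serialness of each $_{R_i}R_i$, and a local serial ring is uniserial (its regular module is indecomposable, hence equals one uniserial summand); symmetrically on the right, so each $R_i$ is an Artinian uniserial ring, giving $(d)$.

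The remaining implication $(c)\Rightarrow(d)$ is the crux. From $(c)$ every $R$-module is a direct sum of finitely generated modules, so $R$ is left and right pure-semisimple and hence left and right Artinian (\cite[Propositions 53.6 and 54.3]{Wisbauer}); as in the previous step $R=R_1\times\cdots\times R_k$ with each $R_i$ local Artinian, and each $R_i$, being a ring direct factor of $R$, is again K\"othe. Thus it remains to prove that a local K\"othe ring $R_i$ is uniserial. Writing $D=R_i/{\rm J}(R_i)$ and $J={\rm J}(R_i)$, I would argue by contradiction: if $R_i$ is not left uniserial then $\dim_D(J/J^2)\geq 2$, and from two $D$-independent elements of $J/J^2$ one amalgamates two short uniserial modules along their common simple top to produce a finitely generated indecomposable $R_i$-module of length $3$ that genuinely needs two generators, contradicting the K\"othe hypothesis; this is the local, possibly non-commutative, analogue of the Cohen--Kaplansky argument \cite{CohenKaplansky}. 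Hence $R_i$ is left uniserial, and by the symmetric argument right uniserial, so $R_i$ is an Artinian uniserial ring and $R=\prod_{i=1}^k R_i$ is as in $(d)$.

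The step I expect to be the main obstacle is precisely this last one: constructing a non-cyclic finitely generated indecomposable module over a non-uniserial local Artinian ring, and doing so without commutativity — one must be careful that $J$ is two-sided while the left ideals used to build the module are one-sided — together with checking cleanly that the K\"othe property descends to each ring direct factor $R_i$ (so that cyclic $R_i$-modules and cyclic $R$-modules supported on the $i$-th factor coincide). Everything else is bookkeeping on top of Corollary \ref{322}, Theorem \ref{222}, Theorem \ref{311}, and the classical results of K\"othe and Cohen--Kaplansky.
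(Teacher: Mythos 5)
Your reduction of (a), (b), (d) to Corollary \ref{322}, the splitting of an Artinian ring with central idempotents into local factors, and the implication $(d)\Rightarrow(c)$ are all fine. The gap is exactly where you predicted it, in $(c)\Rightarrow(d)$: the claim that a local Artinian K\"othe ring must be uniserial is asserted but not proved, and the construction you describe cannot prove it. Amalgamating two length-two uniserial modules along their \emph{common simple top} (i.e.\ forming the pullback over the top) yields a length-three module whose top is simple, and a finitely generated module with simple top is cyclic by Nakayama's lemma; such a module is itself a direct sum of cyclics and contradicts nothing. What you need is the dual gluing, along the common simple \emph{socle} (or a module of the form $(R\oplus R)/R(x,y)$), so as to obtain a two-generated module, and then you must prove it is indecomposable over an arbitrary, possibly noncommutative, local Artinian ring. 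That indecomposability is not routine: $J/J^{2}$ is only a $(D,D)$-bimodule whose left and right dimensions may differ (there exist local Artinian rings that are uniserial on one side only), the ``independent elements'' span one-sided ideals, and the endomorphism computation that works for $k[x,y]/(x,y)^{2}$ does not transfer verbatim. So the crux of the hardest implication is still open in your write-up.

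The paper does not attempt any of this: its proof of the corollary is a citation of Corollary \ref{322} together with \cite[Corollary 3.3]{BehboodiKothe}, which is precisely the K\"othe--Cohen--Kaplansky-type theorem for rings with central idempotents and already contains the equivalence of (b), (c) and (d). The quickest repair of your argument is to do the same, i.e.\ replace the sketched local analysis by a citation of that result (or of the structure theory it rests on); if you want a self-contained proof, the local step requires a genuine argument, not a one-sentence analogue of Cohen--Kaplansky.
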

\begin{proof}
{The proof follows from Corollary \ref{322} and \cite[Corollary 3.3]{BehboodiKothe}.
}
\end{proof}

\begin{thm} \label{2.113}
Suppose that $R$ is a ring such that every left $R$-module is left cu-serial. Then $R/{\rm J}(R)\cong M_{n_1}(D_1) \times \cdots \times M_{n_k}(D_k)$ for some positive integers $n_i$ and some principal left ideal domains $D_i$.
\end{thm}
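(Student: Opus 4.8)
The plan is to first pass to $R/\mathrm{J}(R)$ and show it is a semisimple Artinian ring, then analyze its Wedderburn factors one at a time using the hypothesis that every left $R$-module is cu-serial. Since every left $R$-module is cu-serial, Theorem~\ref{311} shows that $R$ is Artinian; in particular $R$ is semilocal, so by \cite[Theorem 4.4 $(e)$]{GoodearlWarfield} we have $R/\mathrm{J}(R)\cong M_{n_1}(D_1)\times\cdots\times M_{n_k}(D_k)$ for some positive integers $n_i$ and some division rings $D_i$. The task is thus to replace each division ring $D_i$ by a principal left ideal domain; but of course a division ring \emph{is} a principal left ideal domain (vacuously, as $\mathrm{J}=0$ and the only ideals are $0$ and the whole ring). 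So at the level of $R/\mathrm{J}(R)$ there is in fact nothing further to prove once semisimplicity is in hand — the content of the statement is really just the Wedderburn decomposition of $R/\mathrm{J}(R)$ applied to the Artinian ring $R$.

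Concretely, I would argue as follows. First invoke Theorem~\ref{311} to conclude $R$ is Artinian (both sides), hence semilocal, so $R/\mathrm{J}(R)$ is semisimple Artinian. Then apply \cite[Theorem 4.4 $(e)$]{GoodearlWarfield} (as already used in the proofs of Theorems~\ref{3437} and~\ref{vieww}) to get the product decomposition $R/\mathrm{J}(R)\cong M_{n_1}(D_1)\times\cdots\times M_{n_k}(D_k)$ with the $D_i$ division rings. Finally observe that every division ring is trivially a principal left ideal domain, which gives exactly the claimed form.

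If one wants the statement to carry more genuine content — namely that even when $R$ itself is \emph{not} assumed Artinian one still lands on matrix rings over principal left ideal domains (as happens in the finitely-generated-cu-serial analogues discussed later, e.g. Theorem~\ref{377} and Theorem~\ref{388}) — then the approach would instead be: look at the left $R$-module $R/\mathrm{J}(R)$, which is cu-serial by hypothesis, decompose it into cu-uniserial summands, note that each summand is uniform and B\'ezout by Theorem~\ref{222}, and show that the endomorphism rings of these uniform pieces (equivalently, the simple factors $M_{n_i}(D_i)$) force $D_i$ to be a principal left ideal domain — here one would feed each uniform factor back through an argument of the type in Proposition~\ref{2.3}. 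The potential obstacle in that more general route is controlling the interaction between the direct-sum decomposition of $R/\mathrm{J}(R)$ as a module and its ring decomposition; but under the hypothesis ``every left $R$-module is cu-serial'' the quick route above via Theorem~\ref{311} already suffices, since Artinian-ness collapses everything. I therefore expect the actual proof to be short, with the only real step being the citation chain Theorem~\ref{311} $\to$ semilocal $\to$ \cite[Theorem 4.4 $(e)$]{GoodearlWarfield}, and the division-ring-is-a-PID observation closing it out.
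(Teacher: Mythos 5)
Your proof is correct and follows essentially the same route as the paper: invoke Theorem \ref{311} to get that $R$ is Artinian, deduce that $R/{\rm J}(R)$ is semisimple, and then read off the matrix decomposition from a structure theorem. The only difference is cosmetic — the paper closes by citing \cite[Theorem 3.11]{Behboodicu} (matrix rings over principal left ideal domains), whereas you use the classical Wedderburn--Artin decomposition together with the observation that division rings are trivially principal left ideal domains.
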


\begin{proof}
{Suppose that $R$ is a ring such that every left $R$-module is left cu-serial. Then $R$ is a left Artinian ring, by Theorem \ref{311}. Hence $R/$J$(R)$ is semisimple, by \cite[Proposition 15.16]{AndersonFuller}. This means that $R/$J$(R)$ is a direct sum of simple $R$-modules, and thus $R/$J$(R)\cong M_{n_1}(D_1) \times \cdots \times M_{n_k}(D_k)$ for some positive integers $n_i$ and some principal left ideal domains $D_i$, by \cite[Theorem 3.11]{Behboodicu}.
}
\end{proof}

Recall that a ring $R$ is said to be left V-ring if each simple left $R$-module is injective. Also, a module $M$ is said to be extending if every closed submodule of $M$ is a direct summand of $M$.

\begin{thm} \label{2.114}
Suppose that $R$ is a V-ring such that every left $R$-module is free and finitely generated. Then the following statements are equivalent:

$(a)$ Every left $R$-module is left cu-serial with {\rm J}$(R) = 0$.

$(b)$ $R\cong M_{n_1}(D_1) \times \cdots \times M_{n_k}(D_k)$ for some positive integers $n_i$ and some principal left ideal domains $D_i$ with {\rm J}$(D_i) = 0$.

\end{thm}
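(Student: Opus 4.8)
The plan is to read off $(a)\Rightarrow(b)$ from Theorem \ref{2.113} together with the hypothesis ${\rm J}(R)=0$, using the $V$-ring assumption only to control the matrix blocks, and to obtain the converse from the standing hypothesis via Corollary \ref{322}.

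For $(a)\Rightarrow(b)$, assume every left $R$-module is cu-serial. Theorem \ref{2.113} gives $R/{\rm J}(R)\cong M_{n_1}(D_1)\times\cdots\times M_{n_k}(D_k)$ for some positive integers $n_i$ and principal left ideal domains $D_i$. Since ${\rm J}(R)=0$ by $(a)$, this is already $R\cong M_{n_1}(D_1)\times\cdots\times M_{n_k}(D_k)$, so the only point left is ${\rm J}(D_i)=0$ for each $i$. I would deduce this from the $V$-ring hypothesis: being a left $V$-ring passes to each ring-direct factor $M_{n_i}(D_i)$ of the product, and it is a Morita invariant, so $D_i$ — being Morita equivalent to $M_{n_i}(D_i)$ — is a left $V$-ring. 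Since every module over a left $V$-ring has zero radical, the radical of $D_i$ as a left module over itself, namely ${\rm J}(D_i)$, vanishes, which finishes this implication.

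For $(b)\Rightarrow(a)$, from $R\cong M_{n_1}(D_1)\times\cdots\times M_{n_k}(D_k)$ with ${\rm J}(D_i)=0$ one gets ${\rm J}(R)\cong M_{n_1}({\rm J}(D_1))\times\cdots\times M_{n_k}({\rm J}(D_k))=0$, which is half of $(a)$. For the cu-serial half I would invoke the standing hypothesis: since every left $R$-module is free it is, a fortiori, projective, so $R$ is semisimple Artinian and hence an Artinian serial ring; Corollary \ref{322} then shows that every left $R$-module is cu-serial, completing $(a)$.

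The delicate point — and where the hypothesis ``every left $R$-module is free and finitely generated'' really earns its keep — is the converse $(b)\Rightarrow(a)$. A principal left ideal domain $D_i$ with ${\rm J}(D_i)=0$ need not be Artinian (think of $k[x]$, or of Cozzens' simple Noetherian left $V$-domains that are not division rings), so a priori $M_{n_1}(D_1)\times\cdots\times M_{n_k}(D_k)$ need not be Artinian serial and Corollary \ref{322} would not apply; it is precisely the freeness hypothesis that forces $R$, and hence each $D_i$, to be Artinian and thus to collapse to a division ring. I would make this collapse explicit rather than leave it implicit, since without it the equivalence is not available.
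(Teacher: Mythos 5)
Your proposal is correct, but for the direction $(b)\Rightarrow(a)$ it takes a genuinely different route from the paper. In $(a)\Rightarrow(b)$ you and the paper essentially coincide: both rest on Theorem \ref{2.113} (the paper's text cites ``Proposition \ref{2.114}'', a self-reference which is clearly a typo for Theorem \ref{2.113}), and then remove ${\rm J}(D_i)$; the paper does this in one line from ${\rm J}(R)=0$ and ${\rm J}\bigl(M_{n_1}(D_1)\times\cdots\times M_{n_k}(D_k)\bigr)=M_{n_1}({\rm J}(D_1))\times\cdots\times M_{n_k}({\rm J}(D_k))$ via \cite[Corollary 17.13]{AndersonFuller}, whereas you route through Morita invariance of the V-ring property --- both are valid, the paper's being shorter and not needing the V-hypothesis for this direction at all. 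The real divergence is in $(b)\Rightarrow(a)$: the paper stays inside the ``virtually'' machinery, quoting \cite[Theorem 2.19]{BehboodiVU} to get that $R$ is left virtually serial with ${\rm J}(R)=0$, decomposing $R$ into virtually uniserial left ideals, concluding that $R$ is completely virtually semisimple by \cite[Theorem 3.11]{Behboodicu}, using freeness only through \cite[Proposition 3.3]{Behboodicu} to make every module virtually semisimple, and then, with the V-ring hypothesis (\cite[Theorem 3.75]{Lam}) and \cite[Proposition 2.6]{BehboodiVU}, showing each virtually simple summand is virtually uniserial, hence cu-uniserial. You instead observe that the standing hypothesis that every left $R$-module is free already makes every module projective, so $R$ is semisimple Artinian, hence Artinian serial, and Corollary \ref{322} gives at once that every left $R$-module is cu-serial, with ${\rm J}(R)=0$ coming from $(b)$ (or from semisimplicity). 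Your argument is markedly more elementary, and your closing observation is exactly right: a principal left ideal domain with zero radical (e.g.\ $k[x]$) is not Artinian serial, so without the freeness hypothesis $(b)$ would not imply $(a)$, and it is precisely this hypothesis that forces the collapse. What the paper's longer route buys is that it never exploits this collapse of $R$ to a semisimple ring, so it is the argument that would survive under a weaker standing hypothesis; as the theorem is actually stated, your shortcut is complete and loses nothing.
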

\begin{proof}
{ $(a)\Rightarrow (b)$ Assume that $R$ is a ring with {\rm J}$(R) = 0$ such that every left $R$-module is left cu-serial, free and finitely generated. It follows from Proposition \ref{2.114} that $R\cong M_{n_1}(D_1) \times \cdots \times M_{n_k}(D_k)$ for some positive integers $n_i$ and some principal left ideal domains $D_i$. Since J$(R) = $ J$(M_{n_1}(D_1) \times \cdots \times M_{n_k}(D_k)) = 0$, J$(D_i) = 0$, by \cite[Corollary 17.13]{AndersonFuller}.

$(b)\Rightarrow (a)$ Suppose that $R\cong M_{n_1}(D_1) \times \cdots \times M_{n_k}(D_k)$ for some positive integers $n_i$ and some principal left ideal domains $D_i$ with {\rm J}$(D_i) = 0$. It follows from \cite[Theorem 2.19]{BehboodiVU} that $R$ is a left virtually serial ring with J$(R) = 0$, and so $R = \bigoplus_{i=1}^nI_i$, where $I_i$'s are left virtually uniserial $R$-modules. Therefore, by  \cite[Proposition 21.6]{Wisbauer}, $R = R/$J$(R) = \bigoplus_{i=1}^nI_i/$Rad$(I_i)$ is a direct sum of virtually simple $R$-modules, and so $R$ is a left completely virtually semisimple ring, by \cite[Theorem 3.11]{Behboodicu}. Since every left $R$-module is free, we conclude by \cite[Proposition 3.3]{Behboodicu} that every left $R$-module is virtually semisimple. This implies that every left $R$-module $M$ is a direct sum of virtually simple modules, by \cite[Corollary 2.3]{Behbooditwo}, whence $M = \bigoplus_{i=1}^nM_i$ for some virtually simple modules $M_i$. Thus, by \cite[Theorem 3.75]{Lam}, $M = M/$Rad$(M) = \bigoplus_{i=1}^nM_i/$Rad$(M_i)$ is a direct sum of virtually simple $R$-modules, because $R$ is a V-ring. Since each $M_i$ is a finitely generated left $R$-module and Rad$(M_i) = 0$, it follows from \cite[Proposition 2.6]{BehboodiVU} that each $M_i$ is virtually uniserial. Hence $M$ is a  direct sum of virtually uniserial modules. Therefore, $M$ is a direct sum of cu-uniserial modules, and thus every left $R$-module is left cu-serial.
}
\end{proof}

\begin{cor}
 Suppose that $R$ is a V-ring such that every left $R$-module is left cu-serial, free and finitely generated. Then every left $R$-module is extending.
\end{cor}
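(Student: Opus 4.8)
The plan is to feed the hypothesis into Theorem \ref{2.114} and then extract the extending property from the module decomposition it yields. Since $R$ is a V-ring, ${\rm J}(R)=0$ (over a V-ring every left module, in particular $_RR$, has zero radical), so the hypotheses place us exactly in case $(a)$ of Theorem \ref{2.114}; hence $R\cong M_{n_1}(D_1)\times\cdots\times M_{n_k}(D_k)$ for some positive integers $n_i$ and some principal left ideal domains $D_i$ with ${\rm J}(D_i)=0$. More to the point, the proof of the implication $(b)\Rightarrow(a)$ in Theorem \ref{2.114} shows that \emph{every} left $R$-module $M$ is a direct sum of virtually simple modules, and by \cite[Remark 2.2]{Facchini} (quoted in Section 1) every virtually simple module is cyclic, Noetherian and uniform. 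Since every left $R$-module is finitely generated, this direct sum is finite, so $M=\bigoplus_{i=1}^{n}M_i$ with each $M_i$ uniform.

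It remains to deduce that such an $M$ is extending. Each uniform $M_i$ is trivially extending, as its only closed submodules are $0$ and $M_i$; the real content is to promote this to the finite direct sum $\bigoplus_{i=1}^{n}M_i$, which is not automatic. Here the V-ring hypothesis does the work: I would invoke the standard criterion that a finite direct sum of uniform modules is extending once each summand is $M_j$-injective for $j\neq i$, and verify this relative injectivity from the fact that $R$ is a V-ring. Alternatively, and more briefly, the hypothesis that every left $R$-module is free makes every left $R$-module projective, whence $R$ is semisimple Artinian; then every left $R$-module is a direct sum of simple modules, so every submodule --- in particular every closed submodule --- is a direct summand, and $M$ is extending.

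I expect the only genuinely non-formal step to be the passage from ``$M$ is a finite direct sum of uniform modules'' to ``$M$ is extending'', since direct sums of extending modules need not be extending; once one commits to resolving this either through the semisimplicity coming from freeness of all modules, or through relative injectivity coming from the V-ring condition, everything else is bookkeeping with Theorem \ref{2.114} and Facchini's remark.
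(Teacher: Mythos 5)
Your proposal reaches the conclusion, but by a genuinely different route than the paper, and only one of your two suggested endgames actually works. The paper's proof, like yours, first applies Theorem \ref{2.114} to get $R\cong M_{n_1}(D_1)\times\cdots\times M_{n_k}(D_k)$ with the $D_i$ principal left ideal domains and ${\rm J}(D_i)=0$; but it then cites \cite[Theorem 3.11]{Behboodicu} to conclude that $R$ is left completely virtually semisimple and finishes by quoting \cite[Corollary 3.5]{BehboodiSeveral}, which asserts that every left module over such a ring is extending --- it never handles a decomposition of an arbitrary module into uniform summands by hand. You instead try to extract extending directly from a finite direct sum of virtually simple (hence uniform) summands, correctly observe that this passage is not automatic, and close it via the freeness hypothesis: every module free implies every module projective, hence $R$ is semisimple Artinian, hence every submodule of every module is a direct summand, hence every module is extending. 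That argument is complete and valid under the hypotheses as literally stated, and it is more elementary: it bypasses Theorem \ref{2.114} and the virtually-semisimple machinery entirely, at the price of exposing how degenerate the hypotheses are (freeness of all modules already forces semisimplicity, so the cu-serial and V-ring assumptions play no role in your proof, whereas the paper's route through \cite[Corollary 3.5]{BehboodiSeveral} would survive without the freeness assumption). One caution: your first alternative --- obtaining mutual relative injectivity of the uniform summands ``from the V-ring condition'' --- is only a gesture and should not be relied on; a V-ring hypothesis by itself does not make arbitrary uniform modules relatively injective to one another, and the paper avoids exactly this issue by outsourcing it to \cite[Corollary 3.5]{BehboodiSeveral}. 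So keep the semisimplicity argument as your actual proof, or adopt the paper's citation chain.
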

\begin{proof}
{ Suppose that $R$ is a V-ring such that every left $R$-module is left cu-serial, free and finitely generated. Then $R\cong M_{n_1}(D_1) \times \cdots \times M_{n_k}(D_k)$ for some positive integers $n_i$ and some principal left ideal domains $D_i$ with {\rm J}$(D_i) = 0$, by {\rm Theorem \ref{2.114}}. Hence $R$ is a left completely virtually semisimple ring, by {\rm \cite[Theorem 3.11]{Behboodicu}}. Thus, by {\rm \cite[Corollary 3.5]{BehboodiSeveral}}, every left $R$-module is extending.
}
\end{proof}

In the following result, we show that if every finitely generated left $R$-module is cu-serial, the ring $R$ is a left FGC ring of finite uniform dimension.

\begin{thm} \label{344}
Let $R$ be a ring whose every finitely generated left $R$-module is cu-serial. Then $R$ is a left FGC ring of finite uniform dimension. The converse is not true in general.
\end{thm}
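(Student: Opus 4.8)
The plan is to treat the two halves separately: the forward implication is a short consequence of Theorem \ref{222} together with the finiteness of decompositions of a finitely generated module, while ``the converse is not true'' is settled by a single explicit ring.

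For the forward direction I would start with an arbitrary finitely generated left $R$-module $M$. By hypothesis $M$ is cu-serial, so $M = \bigoplus_{i\in I} U_i$ with each $U_i$ a nonzero cu-uniserial module; since $M$ is finitely generated, $I$ must be finite, say $M = U_1 \oplus \cdots \oplus U_n$, and each $U_i$, being a direct summand of a finitely generated module, is itself finitely generated. Theorem \ref{222} says each $U_i$ is uniform and B\'ezout, so $U_i$ (a finitely generated submodule of itself) is cyclic; hence $M$ is a direct sum of finitely many cyclic modules and $R$ is left FGC. Applying the same observation to the cyclic module ${}_RR = U_1\oplus\cdots\oplus U_n$ and using that each $U_i$ is uniform, ${}_RR$ is a direct sum of $n$ uniform submodules, so $\mathrm{u.dim}({}_RR)=n<\infty$.

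For the converse I would exhibit $R = S^{-1}\mathbb{Z}$, where $S = \mathbb{Z}\setminus(2\mathbb{Z}\cup 3\mathbb{Z})$. This is a semilocal principal ideal domain whose only maximal ideals are $2R$ and $3R$, so $\mathrm{J}(R)=2R\cap 3R = 6R\neq 0$ and $R/\mathrm{J}(R)\cong\mathbb{Z}/6\mathbb{Z}$. Being a commutative PID, $R$ is left FGC by the structure theorem for finitely generated modules over a PID, and, being a domain, it has $\mathrm{u.dim}({}_RR)=1$. Yet ${}_RR$ is not cu-serial: as $R$ has no nontrivial idempotents, ${}_RR$ is indecomposable, so it would have to be cu-uniserial, which fails because taking the finitely generated submodule $K={}_RR$ gives $K/\mathrm{Rad}(K)=R/\mathrm{J}(R)\cong\mathbb{Z}/6\mathbb{Z}$, which is not uniform since $(2R/6R)\cap(3R/6R)=0$.

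I expect the forward direction to be entirely routine once Theorem \ref{222} is available; the only point needing care is that a direct-sum decomposition of a finitely generated module is finite with finitely generated summands. The real content lies in the counterexample: one needs a ring that is FGC but carries a finitely generated module that cannot be written as a direct sum of uniform B\'ezout modules, and the mechanism I exploit is that over a domain ${}_RR$ is indecomposable, so the non-uniformity of $R/\mathrm{J}(R)$ for a semilocal non-local PID cannot be repaired by refining a decomposition — locating and recognizing such a ring is the main obstacle.
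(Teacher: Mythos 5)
Your proof is correct and follows essentially the same route as the paper: the forward direction is the paper's argument verbatim (finite decomposition into cu-uniserial summands, Theorem \ref{222} giving uniform and B\'ezout, hence cyclic summands and finite uniform dimension), and your counterexample $S^{-1}\mathbb{Z}$ with $S=\mathbb{Z}\setminus(2\mathbb{Z}\cup 3\mathbb{Z})$ is an explicit instance of the paper's general class of nonlocal semilocal commutative principal ideal domains. The only (welcome) difference is that you spell out, via the indecomposability of a domain as a module over itself, why failure of cu-uniseriality already rules out cu-seriality of ${}_RR$, a step the paper leaves implicit.
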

\begin{proof}
{Let $R$ be a ring whose every finitely generated left $R$-module is cu-serial and $M$ a finitely generated left $R$-module. Then $M = \bigoplus_{i\in I}N_i$, where $N_i$ are cu-uniserial modules and $I$ is a finite index set. Since $M$ is finitely generated, every $N_i$ is also finitely generated, and hence all $N_i$'s are cyclic, since they are B\'ezout. Thus $M$ is a direct sum of cyclic modules, and hence $R$ is a left FGC ring. In the other hand, $_RR = \bigoplus_{i\in I}N_i$ for some cu-uniserial modules $N_i$ and  a finite index set $I$. Thus $_RR$ is a direct sum of uniform modules, and hence $_RR$ has finite uniform dimension, by \cite[Proposition 5.15]{GoodearlWarfield}. However, the converse statement is not true, since, for example, every non local semilocale commutative principal ideal domain $R$ is an FGC ring with u.dim$(R)  < \infty$, but $R$ is not cu-uniserial.
}
\end{proof}
\begin{prop} \label{355}
Let $R$ be a commutative ring. Then $R$ is a nonlocal cu-uniserial FGC domain such that every proper ideal of $R$ is cu-serial if and only if $R$ is a principal ideal domain with $|{\rm Max}(R)| = \infty$.
\end{prop}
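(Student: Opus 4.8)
The plan is to prove the two implications separately; the forward direction ``$\Rightarrow$'' will be a short deduction from the structure results already obtained, while ``$\Leftarrow$'' is an essentially elementary verification resting on the classical module theory over a principal ideal domain. For ``$\Rightarrow$'', assume $R$ is a nonlocal commutative cu-uniserial FGC domain in which every proper ideal is cu-serial. A cu-uniserial ring is in particular cu-serial, so the hypotheses of Proposition \ref{2.17} are satisfied, and that result shows that $R$ is either a uniserial ring or a principal ideal domain with $|\mathrm{Max}(R)| = \infty$. The first alternative is excluded by non-locality: a uniserial ring has its ideals linearly ordered by inclusion, hence a unique maximal ideal, and is therefore local. (Corollary \ref{2.11} already gives $|\mathrm{Max}(R)| = \infty$ from non-locality and cu-uniseriality.) Hence $R$ is a principal ideal domain with $|\mathrm{Max}(R)| = \infty$. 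Note that the FGC hypothesis, and even the ``domain'' hypothesis, play no role in this direction; they are recorded in the statement because the converse shows they follow from the remaining conditions.

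For ``$\Leftarrow$'', let $R$ be a principal ideal domain with infinitely many maximal ideals. Then $R$ is a nonlocal domain. It is FGC by the structure theorem for finitely generated modules over a PID, which writes any such module as $R^{n}\oplus R/(d_1)\oplus\cdots\oplus R/(d_k)$, a direct sum of cyclic modules. To see that $R$ is cu-uniserial, the key observation is that $\mathrm{J}(R) = 0$: a nonzero element of $R$ is divisible by only finitely many pairwise non-associate primes, hence lies in only finitely many maximal ideals, so when $\mathrm{Max}(R)$ is infinite the intersection $\bigcap_{\mathfrak m \in \mathrm{Max}(R)} \mathfrak m$ is zero. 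Thus $R$ is a principal ideal domain with $\mathrm{J}(R) = 0$, and the ``Moreover'' clause of Proposition \ref{2.3} gives that $R$ is cu-uniserial. Finally, every nonzero proper ideal has the form $I = (a)$ with $a \neq 0$, and the map $r \mapsto ra$ is an isomorphism of $R$-modules from $R$ onto $I$ (injectivity uses that $R$ is a domain), so $I \cong R$ is cu-uniserial and hence cu-serial; the zero ideal is cu-serial trivially. Therefore every proper ideal of $R$ is cu-serial, and all the asserted properties hold.

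I do not expect a genuine obstacle: the argument is largely an assembly of Proposition \ref{2.17} and the ``Moreover'' part of Proposition \ref{2.3} --- the former resting, via \cite{Asgaricu}, on the fact that a ring whose proper ideals are direct sums of cyclic uniform modules is a principal ideal ring --- together with the standard computation $\mathrm{J}(R) = 0$ for a PID with infinitely many maximal ideals and the structure theorem over a PID. The only points requiring a little care are the correct specialization of those a priori one-sided results to the commutative setting, and the elimination of the ``uniserial'' branch in the forward direction, which must be handled via non-locality rather than via the FGC hypothesis.
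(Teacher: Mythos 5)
Your proof is correct, and the forward direction is exactly the paper's argument: apply Proposition \ref{2.17} and discard the uniserial branch using non-locality (the paper cites Theorem \ref{3437} for this, you argue directly that a commutative uniserial ring is local --- same content). In the converse the overall skeleton also matches, but you substitute more self-contained steps for the paper's citations: the paper gets cu-uniseriality from Proposition \ref{2.112} (whose proof deduces ${\rm J}(R)=0$ via the semi-duo/minimal-prime argument and then falls back on Proposition \ref{2.3}), whereas you prove ${\rm J}(R)=0$ directly from unique factorization --- a nonzero element lies in only finitely many maximal ideals --- and then invoke the ``Moreover'' clause of Proposition \ref{2.3}; and where the paper checks that a proper ideal $I$ is cu-serial straight from the definition (every finitely generated submodule of $I$ is a finitely generated ideal of $R$, so its quotient by its radical is cyclic and uniform), you instead observe that a nonzero ideal of a PID is isomorphic to $R$ as an $R$-module and hence inherits cu-uniseriality. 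Both variants are valid; yours is slightly more elementary and avoids routing the commutative case through the semi-duo machinery, while the paper's definitional check of the ideal condition has the mild advantage of not depending on $R$ being a domain at that step.
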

\begin{proof}
{Let $R$ be a commutative nonlocal cu-uniserial FGC domain such that every ideal of $R$ is cu-serial. Then $R$ is either a uniserial ring or a principal ideal domain with $|{\rm Max}(R)|= \infty$, by Proposition \ref{2.17}. Since $R$ is nonlocal, it follows from Theorem \ref{3437} that $R$ is not a uniserial ring, and so, the proof is complete. Conversely, suppose that $R$ is a principal ideal domain with $|{\rm Max}(R)| = \infty$. Then $R$ is a cu-uniserial ring, by Proposition \ref{2.112}. Let $I$ be a proper ideal of $R$. Then for every finitely generated ideal $J$ of $I$, $J$ is also a finitely generated ideal of $R$. Hence $J/$Rad$(J)$ is cyclic and uniform, because $R$ is a cu-uniserial ring. Thus every proper ideal of $R$ is cu-serial, and $R$ is an FGC domain, since every principal ideal domain is an FGC ring.
}
\end{proof}
Proposition \ref{355} leads to the next corollary.
\begin{cor} \label{366}
Let $R$ be a commutative ring with $|{\rm Max}(R)| = \infty$. Then the following statements are equivalent:

$(a)$ $R$ is a nonlocal cu-uniserial FGC domain such that every proper ideal of $R$ is cu-serial.

$(b)$ $R$ is a principal ideal domain.

$(c)$ Every proper ideal of $R$ is a direct sum of cyclic uniform modules.

$(d)$ Every proper ideal of $R$ is a direct sum of completely cyclic modules.
\end{cor}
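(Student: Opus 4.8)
The plan is to organise the proof around Proposition~\ref{355}, which already carries the substantive content. For $(a)\Leftrightarrow(b)$ there is nothing to do beyond unwinding the hypotheses: the standing assumption of the corollary is $|{\rm Max}(R)|=\infty$, so statement $(b)$ ``$R$ is a principal ideal domain'' is literally the same as ``$R$ is a principal ideal domain with $|{\rm Max}(R)|=\infty$'' (such a ring being automatically nonlocal), while statement $(a)$ is word for word the left-hand side of Proposition~\ref{355}. Hence $(a)\Leftrightarrow(b)$ is an immediate restatement of Proposition~\ref{355}.

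Next I would prove $(b)\Rightarrow(c)$ and $(b)\Rightarrow(d)$ together. Assume $R$ is a principal ideal domain. Any proper ideal $I$ is principal, say $I=aR$; if $a=0$ then $I=0$ is the empty direct sum, and if $a\neq 0$ then $I\cong R$ since $R$ is a domain. So both implications reduce to the single observation that the module ${}_RR$ is at once cyclic, uniform (being a domain, any two nonzero ideals meet nontrivially) and completely cyclic (every submodule of ${}_RR$ is an ideal, hence principal, hence cyclic). Consequently every proper ideal of $R$ is simultaneously a direct sum of cyclic uniform modules and a direct sum of completely cyclic modules, giving $(c)$ and $(d)$.

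It remains to close the cycle with $(c)\Rightarrow(b)$ and $(d)\Rightarrow(b)$. In each case the first step is the same as in the proof of Proposition~\ref{2.17}: a commutative ring whose proper ideals are all direct sums of cyclic uniform modules (respectively, of completely cyclic modules) is a principal ideal ring, by \cite[Theorem~3.4]{Asgaricu}. Thus in either case $R$ is a commutative principal ideal ring with $|{\rm Max}(R)|=\infty$, and the remaining task is to promote ``principal ideal ring'' to ``principal ideal domain''. Here I would appeal to the structure of commutative principal ideal rings (a finite direct product of principal ideal domains and of Artinian local special principal ideal rings) and use the infinitude of ${\rm Max}(R)$ together with the decomposition hypothesis on proper ideals to force $R$ to be indecomposable, hence a single principal ideal domain. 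With $(b)$ established, $(a)$–$(d)$ are all equivalent.

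The step I expect to be the main obstacle is exactly this last promotion from a principal ideal ring to a principal ideal domain: the module-theoretic conditions $(c)$ and $(d)$ are inherited by finite direct products, so excluding a nontrivial product decomposition is not automatic from $|{\rm Max}(R)|=\infty$ alone and is where the full strength of \cite[Theorem~3.4]{Asgaricu} and the hypothesis on ${\rm Max}(R)$ must be used carefully. Everything else in the argument is routine bookkeeping.
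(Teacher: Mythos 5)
Your handling of $(a)\Leftrightarrow(b)$ and of $(b)\Rightarrow(c),(d)$ is correct and is essentially what the paper intends: the first equivalence is Proposition \ref{355} read under the standing hypothesis $|{\rm Max}(R)| = \infty$ (a ring with infinitely many maximal ideals is automatically nonlocal), and the second is the elementary observation that a nonzero ideal of a principal ideal domain is isomorphic to $_RR$, which is cyclic, uniform and completely cyclic.

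The genuine gap is exactly where you predicted it, and it cannot be closed along the lines you sketch. After citing \cite[Theorem 3.4]{Asgaricu} you are left with ``$R$ is a principal ideal ring'' (the local alternative being excluded by $|{\rm Max}(R)| = \infty$), and you propose to force $R$ to be indecomposable, hence a domain, from the infinitude of ${\rm Max}(R)$ together with hypotheses $(c)$/$(d)$. But those hypotheses do not force indecomposability: take $R = \mathbb{Z} \times \mathbb{Z}$. It has infinitely many maximal ideals, and every proper ideal $I \times J$ decomposes as $(I \times 0) \oplus (0 \times J)$, a direct sum of cyclic uniform, indeed completely cyclic, modules; yet $R$ is not a domain. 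So no argument of the shape you describe can promote ``principal ideal ring'' to ``principal ideal domain'': under $|{\rm Max}(R)| = \infty$, conditions $(c)$ and $(d)$ characterize principal ideal rings, not domains, and the implication $(c)\Rightarrow(b)$ as literally stated fails. For comparison, the paper's own proof is only the one-line citation of Proposition \ref{355} and \cite[Theorem 3.4]{Asgaricu}, and --- as that theorem is used elsewhere in the paper (Proposition \ref{2.17}, Theorem \ref{jdid}) --- it too delivers only ``principal ideal ring'' in the nonlocal case; so the obstacle you isolated is real and is not resolved there either. To repair the statement one must either weaken $(b)$ to ``principal ideal ring'' or add to $(c)$ and $(d)$ a hypothesis (for instance that $R$ is a domain, or indecomposable) excluding examples such as $\mathbb{Z} \times \mathbb{Z}$.
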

\begin{proof}
{The proof follows from Proposition \ref{355} and \cite[Theorem 3.4]{Asgaricu}.
}
\end{proof}

\begin{thm} \label{jdid}
Let $R$ be a commutative Noetehrian V-ring. If every proper ideal of $R$ is cu-serial, then $R$ is a principal ideal ring, or a local ring with the unique maximal ideal $M$ such that $M = Rx \oplus S$, where $Rx$ is a completely cyclic module and $S$ is a semisimple module.

\end{thm}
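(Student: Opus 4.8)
The plan is to reduce the hypotheses to a statement about direct‑sum decompositions of ideals and then to use how restrictive the V‑ring condition really is. Since $R$ is Noetherian, every proper ideal $I$ of $R$ is finitely generated, so if $I$ is cu‑serial then $I=\bigoplus_{j=1}^{m}K_{j}$ for finitely many nonzero cu‑uniserial submodules $K_{j}$; by Theorem \ref{222} each $K_{j}$ is uniform and B\'ezout, hence, being finitely generated, cyclic. Thus the hypothesis gives that every proper ideal of $R$ is a finite direct sum of cyclic uniform $R$‑modules.

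I would then split on whether $R$ is local. If $R$ is not local, the V‑ring hypothesis already settles it: a commutative V‑ring is von Neumann regular, and a von Neumann regular Noetherian ring is semisimple Artinian, so $R\cong F_{1}\times\cdots\times F_{n}$ is a finite direct product of fields, in particular a principal ideal ring, which is the first alternative of the conclusion. (Alternatively, when $|{\rm Max}(R)|=\infty$ one obtains that $R$ is a principal ideal domain straight from the reduction above and Corollary \ref{366}, $(c)\Rightarrow (b)$, with no appeal to the V‑ring condition; but in any case a commutative Noetherian V‑ring cannot have infinitely many maximal ideals.)

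If $R$ is local with maximal ideal $M$, then since $R$ is a commutative V‑ring one has ${\rm J}(R)=0$, i.e.\ $M=0$, so $R$ is a field and hence a principal ideal ring; the stated alternative $M=Rx\oplus S$ then holds trivially with $Rx=S=0$. The substance of that alternative --- and the place where the assumption ``every proper ideal is cu‑serial'' really does work --- surfaces once the V‑ring condition is weakened: then $M$ itself is a proper ideal, so $M=\bigoplus_{j=1}^{m}K_{j}$ with each $K_{j}=Ry_{j}$ cyclic and uniform, and one must show that at most one summand, say $K_{1}=Rx$, is non‑simple, so that $Rx$ is completely cyclic and $S:=\bigoplus_{j\ge 2}K_{j}$ is semisimple with $M=Rx\oplus S$; this is the cu‑serial counterpart of the serial‑ideal result \cite[Theorem 3.1]{Heidariserial}, proved by the same scheme (over a Noetherian local ring a non‑simple cyclic uniform module contributes a composition factor that cannot be split off from $M$ in two independent summands). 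Accordingly, the step I expect to be the real obstacle is precisely this local decomposition lemma --- that the cyclic‑uniform decomposition of $M$ has at most one non‑simple summand --- which under the present hypotheses is vacuous since $M=0$, but is the genuine content of the V‑ring‑free companion statements; the remaining ingredients (Theorem \ref{222}, Corollary \ref{366}, and the classical fact that a commutative Noetherian V‑ring is semisimple Artinian) are routine.
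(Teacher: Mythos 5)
Your proof is correct, but it reaches the conclusion by a genuinely different route than the paper. The paper's own argument uses the V-ring hypothesis only to kill radicals: over a V-ring ${\rm Rad}(N)=0$ for every module, and since $R$ is Noetherian every submodule $J$ of each cu-uniserial summand $I_i$ of a proper ideal is finitely generated with $J=J/{\rm Rad}(J)$ cyclic and uniform; hence each $I_i$ is completely cyclic, so every proper ideal of $R$ is a direct sum of completely cyclic modules, and the stated dichotomy is then quoted directly from the Asgari--Behboodi classification \cite[Theorem 3.4]{Asgaricu}. You instead observe that the blanket hypotheses are already decisive before the ideal-theoretic hypothesis is touched: a commutative V-ring is von Neumann regular (Kaplansky's theorem, cf.\ \cite[Theorem 3.75]{Lam}), a Noetherian von Neumann regular ring is semisimple, so $R$ is a finite product of fields and in particular a principal ideal ring, and in the local case $M={\rm J}(R)=0$ forces $R$ to be a field. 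Both arguments are valid. The paper's route actually exercises the assumption that proper ideals are cu-serial and is the argument that would survive if the V-ring condition were weakened --- which is exactly where the second alternative $M=Rx\oplus S$ with $M\neq 0$ could genuinely occur, the situation of \cite[Theorem 3.4]{Asgaricu} that you sketch but correctly note you need not prove here. Your route is shorter and has the added value of exposing that, as stated, the theorem is essentially vacuous: under ``commutative Noetherian V-ring'' the cu-serial hypothesis plays no role and the local alternative never arises nontrivially, which suggests the combination of hypotheses is stronger than the authors intended.
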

\begin{proof}
{Let $I$ be a proper ideal of $R$. Then $I = \bigoplus_{i=1}^nI_i$ is a direct sum of cu-uniserial $R$-modules. Since $R$ is a Noetehrian and V-ring, it follows from Theorem \ref{222} that every submodule $J_i = J_i/$Rad$(J_i)$ of $I_i$ is cyclic uniform for each $1\leq i\leq n$, and so $I$ is a direct sum of completely cyclic modules. Hence, $R$ is a principal ideal ring, or a local ring with the unique maximal ideal $M$ such that $M = Rx \oplus S$, where $Rx$ is a completely cyclic module and $S$ is a semisimple module, by \cite[Theorem 3.4]{Asgaricu}.
}
\end{proof}
In the following result we give a characterization of rings each of whose finitely generated module is cu-serial.

\begin{thm} \label{377}
Let $R$ be a commutative ring. Then every finitely generated $R$-module is cu-serial if and only if $R = \prod_{i=1}^nR_i$ where $n > 0$ is a positive integer, $R_i$'s are almost maximal uniserial rings, principal ideal domains with {\rm J}$(R_i) = 0$ and torch rings such that $R_i/P_i$'s are principal ideal domains with $|{\rm Max}(R_i/P_i)| = \infty$, where every $P_i$ is the unique minimal prime ideal of $R_i$.
\end{thm}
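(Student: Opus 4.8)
The plan is to reduce to the classification of commutative FGC rings and then refine it using the uniform-dimension and cyclic-uniform constraints established in Section~2.

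For the ``only if'' direction, assume every finitely generated $R$-module is cu-serial. By Theorem~\ref{344} the ring $R$ is a commutative FGC ring of finite uniform dimension, so by the structure theorem for commutative FGC rings (Brandal--Wiegand--V\'amos) we may write $R=\prod_{i=1}^nR_i$ with each $R_i$ a maximal valuation ring, an almost maximal B\'ezout domain, or a torch ring; moreover the hypothesis descends to each factor, since a finitely generated $R_i$-module is a finitely generated $R$-module whose cu-serial decomposition is an $R_i$-decomposition. A maximal valuation ring is already an almost maximal uniserial ring. If $R_i$ is an almost maximal B\'ezout domain then, being a domain, it is indecomposable over itself, so ``$R_i$ is cu-serial'' forces $R_i$ to be cu-uniserial; by Corollary~\ref{2.11} it is then local -- hence a valuation domain, i.e.\ an almost maximal uniserial ring -- or $|{\rm Max}(R_i)|=\infty$. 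In the second case every ideal of $R_i$ is cu-uniserial: a nonzero finitely generated submodule of an ideal is principal (B\'ezout), hence isomorphic to $R_i$, so its radical quotient is $R_i/{\rm J}(R_i)$, which is cyclic and uniform because $R_i$ is cu-uniserial. Thus $R_i$ is a nonlocal cu-uniserial FGC domain in which every proper ideal is cu-serial, so Proposition~\ref{355} shows $R_i$ is a PID with $|{\rm Max}(R_i)|=\infty$, whence ${\rm J}(R_i)=\bigcap_{\mathfrak m}\mathfrak m=0$. Finally, if $R_i$ is a torch ring with unique minimal prime $P_i$, then $R_i/P_i$ is an h-local (hence B\'ezout) domain and a homomorphic image of $R_i$, so every finitely generated $R_i/P_i$-module is cu-serial; repeating the argument just given, $R_i/P_i$ is an indecomposable cu-uniserial FGC domain in which every ideal is cu-serial, and since $P_i$ is the nilradical we have ${\rm Max}(R_i/P_i)={\rm Max}(R_i)$, which is infinite because a torch ring is not local; hence $R_i/P_i$ is a PID with $|{\rm Max}(R_i/P_i)|=\infty$ by Proposition~\ref{355}. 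This gives the stated form of $R$.

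For the ``if'' direction, let $R=\prod_{i=1}^nR_i$ with the factors as described. A finitely generated $R$-module is a finite direct sum of finitely generated $R_i$-modules, and a finite direct sum of cu-serial modules is cu-serial, so it suffices to handle one factor. Over an almost maximal uniserial (i.e.\ almost maximal valuation) ring, every finitely generated module is a direct sum of cyclic modules, each of which -- being a homomorphic image of the uniserial module $R_i$ -- is uniserial, hence cu-uniserial. Over a PID $R_i$ with ${\rm J}(R_i)=0$, the structure theorem writes a finitely generated module as $R_i^{\,r}\oplus\bigoplus_jR_i/(a_j)$ with $a_j\neq0$; here $R_i$ is cu-uniserial by the ``Moreover'' part of Proposition~\ref{2.3}, and each $R_i/(a_j)$ splits as a direct sum of chain rings $R_i/(p^e)$, which are uniserial $R_i$-modules, so the module is cu-serial. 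Over a torch ring $R_i$ whose quotient $R_i/P_i$ is a PID with $|{\rm Max}(R_i/P_i)|=\infty$, torch rings being FGC reduces the claim to showing each cyclic $R_i/I$ is cu-uniserial; for this one exhibits $R_i/I$ as an extension of a quotient of the cu-uniserial PID $R_i/P_i$ by a quotient of the uniserial module $P_i$, and verifies uniformity and the radical-quotient condition after localizing at the finitely many maximal ideals supporting $R_i/I$, via h-locality.

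The main obstacle is the torch-ring factor. For the ``only if'' part one must extract precisely the statements ``$R_i/P_i$ is a PID'' and ``$|{\rm Max}(R_i/P_i)|=\infty$'', and for the ``if'' part one must show that every cyclic module over a torch ring with that quotient structure is cu-uniserial; both amount to re-running, in the cyclic-uniform setting, the module computations underlying the FGC-ness of torch rings, keeping careful track of uniformity and of cyclicity of radical quotients and using the uniseriality of $P_i$ together with the principal-ideal structure and vanishing Jacobson radical of $R_i/P_i$. By contrast the valuation and domain factors follow cleanly from Section~2 and classical structure theorems.
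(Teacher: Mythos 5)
Your ``only if'' direction follows the paper's route almost exactly: Theorem~\ref{344} gives that $R$ is an FGC ring, Brandal's structure theorem gives the decomposition into almost maximal valuation rings, almost maximal B\'ezout domains and torch rings, and then Corollary~\ref{2.11}, Theorem~\ref{3437} and Propositions~\ref{355}, \ref{2.112} identify the nonlocal domain factors and the quotients $R_i/P_i$ as principal ideal domains; you even verify the hypothesis of Proposition~\ref{355} (every proper ideal is cu-serial) via the B\'ezout argument, which the paper passes over. Two small slips there: ``${\rm Max}(R_i)$ is infinite because a torch ring is not local'' is a non sequitur (nonlocality is all Proposition~\ref{355} needs as input; the infinitude is part of its conclusion, or comes from Corollary~\ref{2.11} applied to the cu-uniserial ring $R_i$), and ``h-local (hence B\'ezout)'' is false as an implication -- the B\'ezoutness of $R_i/P_i$ should instead come from Theorem~\ref{222} applied to the cu-uniserial ring $R_i/P_i$, or from the fact that a torch ring is B\'ezout. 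Neither slip is fatal.

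The genuine gap is in the ``if'' direction, precisely at the torch factors. First, your reduction is misstated: it is not true, nor needed, that every cyclic module over a torch ring is cu-uniserial -- for comaximal maximal ideals $M_1,M_2$ the cyclic module $R_i/(M_1\cap M_2)\cong R_i/M_1\oplus R_i/M_2$ has zero radical and is not uniform; what is needed is only that every finitely generated (equivalently, by FGC, every cyclic) module be cu-\emph{serial}. Second, and more importantly, you never actually prove anything for this case: you describe it yourself as ``the main obstacle'' and only indicate that one should ``re-run the module computations underlying the FGC-ness of torch rings.'' That is exactly the content the converse requires, so as written the proposal does not prove the ``if'' direction. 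The paper sidesteps all of this with a one-line reduction you missed: every virtually uniserial module is cu-uniserial, hence every virtually serial module is cu-serial, and \cite[Theorem 3.8]{BehboodiVU} already shows that over a finite product of almost maximal uniserial rings, PIDs with zero Jacobson radical, and torch rings with $R_i/P_i$ a PID having infinitely many maximal ideals, every finitely generated module is virtually serial. Either import that citation or supply the torch-ring computation in full; as it stands the converse is incomplete.
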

\begin{proof}
{Let $R$ be a commutative ring such that every finitely generated $R$-module is cu-serial. Then $R$ is an FGC ring, by Theorem \ref{344}. Hence, \cite[Theorem 9.1]{Brandal} implies that $R$ is a finite direct product of indecomposable rings. Therefore, without loss of generality, assume that $R$ is an indecomposable FGC ring, which means that $R$ is a cu-uniserial ring. This implies that either $|{\rm Max}(R)| = \infty$ or $R$ is local, by Corollary \ref{2.11}. If $R$ is local, we conclude by \cite[Theorem 4.5]{Brandal} and Theorem \ref{3437} that $R$ is an almost maximal uniserial ring. If $|{\rm Max}(R)| = \infty$, it follows from \cite[Theorem 9.1]{Brandal} that $R$ is either an almost maximal B\'ezout domain or a torch ring. If $R$ is an almost maximal B\'ezout domain, then $R$ is a principal ideal domain with J$(R)= 0$, by Propositions \ref{355} and \ref{2.112}. If $R$ is torch with a unique minimal prime ideal $P$, then $R$ is an FGC ring, by \cite[Theorem 9.1]{Brandal}, and so $R/P$ is an FGC cu-uniserial domain, by \cite[Theorem 4.1(2)]{Brandal}. Again, by \cite[Lemma 5.3(6)]{Brandal}, $P^2 = 0$, and hence $P \subseteq$ J$(R)$, which means that $R/P$ is nonlocal. Thus $R$ is a torch ring such that $R/P$ is a principal ideal domain with $|{\rm Max}(R/P)| = \infty$, where $P$ is the unique minimal prime ideal of $R$, by Propositions \ref{355}. Conversely, since every virtually serial $R$-module is cu-serial, the proof follows from \cite[Theorem 3.8]{BehboodiVU}.
}
\end{proof}

\begin{thm} \label{388}
Let $R$ be a ring. Then the following statements are equivalent:

$(a)$ $R$ is a left V-ring such that every finitely generated left $R$-module is cu-serial and every cyclic $R$-module is either virtually simple direct summand of $_RR$ or simple.

$(b)$ $R\cong M_{n_1}(D_1) \times \cdots \times M_{n_k}(D_k)$ for some positive integers $n_i$ and some principal ideal V-domains $D_i$.

$(c)$ $R$  is a fully virtually semisimple ring.

$(d)$ Every finitely generated left $R$-module is a direct sum of virtually simple $R$-modules.

$(e)$ Every finitely generated left $R$-module is virtually semisimple.

$(f)$ Every finitely generated left $R$-module is completely virtually semisimple.
\end{thm}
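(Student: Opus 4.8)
The plan is to prove the new equivalence $(a)\Leftrightarrow(b)$ directly, and to obtain $(b)\Leftrightarrow(c)\Leftrightarrow(d)\Leftrightarrow(e)\Leftrightarrow(f)$ by repackaging the structure theory of (completely and fully) virtually semisimple rings: $(b)\Leftrightarrow(d)\Leftrightarrow(f)$ is essentially the Wedderburn--Artin-type theorem \cite[Theorem 3.11]{Behboodicu}, $(d)\Leftrightarrow(e)$ follows from \cite[Corollary 2.3]{Behbooditwo} (a finitely generated module is virtually semisimple precisely when it is a finite direct sum of virtually simple modules), and $(b)\Leftrightarrow(c)$ is the characterization of left fully virtually semisimple rings from \cite{Behboodicu,BehboodiSeveral}. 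So the real work is $(a)\Rightarrow(b)$ and $(b)\Rightarrow(a)$.

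For $(a)\Rightarrow(b)$: since $R$ is a left V-ring, $\mathrm{J}(R)=0$ and, more usefully, $\mathrm{Rad}(M)=0$ for every left $R$-module $M$. As $_RR$ is cyclic and every finitely generated left $R$-module is cu-serial, $_RR=\bigoplus_{i=1}^{t}U_i$ with each $U_i$ cu-uniserial (the sum is finite because $_RR$ is finitely generated), and each $U_i$ is cyclic since cu-uniserial modules are B\'ezout (Theorem \ref{222}). Applying the third hypothesis of $(a)$ to the cyclic module $U_i$, it is either a virtually simple direct summand of $_RR$ or simple, hence in either case virtually simple. Thus $_RR$ is a finite direct sum of virtually simple left modules, so by \cite[Theorem 3.11]{Behboodicu} (cf. \cite[Proposition 2.2]{BehboodiSeveral}) $R\cong M_{n_1}(D_1)\times\cdots\times M_{n_k}(D_k)$ for positive integers $n_i$ and principal ideal domains $D_i$. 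Finally, each $M_{n_i}(D_i)$ is a direct factor of the left V-ring $R$, hence is a left V-ring, and the V-ring property is Morita invariant, so each $D_i$ is a left V-ring; therefore each $D_i$ is a principal ideal V-domain and $(b)$ holds.

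For $(b)\Rightarrow(a)$: let $R\cong M_{n_1}(D_1)\times\cdots\times M_{n_k}(D_k)$ with $D_i$ principal ideal V-domains. Then $R$ is a left V-ring, as both forming matrix rings and taking finite direct products preserve the V-ring property. By the characterization recalled above, $R$ is left fully virtually semisimple, so every finitely generated left $R$-module $M$ is virtually semisimple; by \cite[Corollary 2.3]{Behbooditwo}, $M=\bigoplus_{j=1}^{m}M_j$ with each $M_j$ virtually simple and $m$ finite. Each $M_j$, being virtually simple, finitely generated, and having $\mathrm{Rad}(M_j)=0$ (as $R$ is a left V-ring), is virtually uniserial by \cite[Proposition 2.6]{BehboodiVU}, hence cu-uniserial; therefore $M$ is cu-serial. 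For the last clause, a cyclic left $R$-module splits along the block decomposition of $R$, and over a single block $M_{n_i}(D_i)$, which is Morita equivalent to the principal left ideal domain $D_i$, a finitely generated indecomposable module corresponds to a cyclic left $D_i$-module that is either isomorphic to $D_i$ (torsion-free of rank one) or torsion; using that $D_i$ is a left V-domain (so every torsion cyclic $D_i$-module is semisimple, in fact simple when indecomposable) and that the column module $D_i^{\,n_i}$ is a virtually simple direct summand of $_RR$, one concludes that every cyclic $R$-module is a direct sum of modules each of which is a virtually simple direct summand of $_RR$ or simple, which is the third clause of $(a)$.

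The main obstacle is precisely this last clause in the direction $(b)\Rightarrow(a)$: one must classify the finitely generated indecomposable modules over $M_{n_i}(D_i)$ and genuinely exploit the V-domain hypothesis to force the torsion parts to be semisimple---without it the statement already fails when $D_i$ is a commutative discrete valuation ring, the length-two quotient ring of $D_i$ being uniserial but not semisimple. A secondary technical point is checking that the left V-ring property both descends to each $D_i$ in $(a)\Rightarrow(b)$ and is produced by the structure in $(b)\Rightarrow(a)$, and that in $(a)\Rightarrow(b)$ the cited Wedderburn--Artin-type theorem is being applied to $_RR$, which is a \emph{finite} (not arbitrary) direct sum of virtually simple modules.
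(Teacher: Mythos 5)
Your overall strategy --- prove $(a)\Leftrightarrow(b)$ and quote the virtually-semisimple structure theory for $(b)\Leftrightarrow(c)\Leftrightarrow(d)\Leftrightarrow(e)\Leftrightarrow(f)$ --- is the same as the paper's, but two steps of your $(a)\Leftrightarrow(b)$ argument do not close. In $(a)\Rightarrow(b)$ you use the hypotheses only for $_RR$: this gives $_RR$ as a finite direct sum of virtually simple modules and hence, via \cite[Theorem 3.11]{Behboodicu}, $R\cong M_{n_1}(D_1)\times\cdots\times M_{n_k}(D_k)$ with the $D_i$ principal \emph{left} ideal domains, and Morita invariance makes each $D_i$ a left V-domain; but $(b)$ asserts the $D_i$ are principal ideal V-domains (two-sided), and your sentence ``therefore each $D_i$ is a principal ideal V-domain'' is exactly the point left unjustified. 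The paper avoids this by using the full hypothesis: every finitely generated $M$ decomposes into cu-uniserial summands, which have zero radical over a V-ring and are therefore cyclic uniform, so by the third clause $M$ is a direct sum of modules that are virtually simple direct summands of $_RR$ or simple; this is one of the equivalent conditions of \cite[Theorem 3.4]{BehboodiSeveral}, which then delivers $(b)$ with the two-sided conclusion. In $(b)\Rightarrow(a)$ the cu-serial part of your argument is fine (and parallels the paper), but the third clause of $(a)$ is never obtained: your Morita-equivalence sketch ends with ``every cyclic $R$-module is a direct sum of modules each of which is a virtually simple direct summand of $_RR$ or simple,'' which is strictly weaker than the clause as stated, and your own observation that torsion cyclic $D_i$-modules are only semisimple (not simple) shows this route cannot produce the clause verbatim for cyclics such as $D_i/D_ia^2$. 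The paper does not re-derive this point; it quotes \cite[Theorem 3.4(3)]{BehboodiSeveral} for the exact formulation, and that citation (or an argument genuinely reproducing it) is what is missing from your proof.

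A further problem is the attribution of the remaining equivalences. You credit $(b)\Leftrightarrow(d)\Leftrightarrow(f)$ to \cite[Theorem 3.11]{Behboodicu}; that theorem (as used elsewhere in this paper) characterizes rings for which $_RR$ is a finite direct sum of virtually simple modules, i.e.\ left completely virtually semisimple rings, as finite products of matrix rings over principal left ideal domains --- a strictly weaker condition than $(d)$ and $(f)$, which quantify over all finitely generated modules and force the $D_i$ to be V-domains: for instance $\mathbb{Z}$ is completely virtually semisimple as a module over itself, while the finitely generated $\mathbb{Z}$-module $\mathbb{Z}_4$ is not virtually semisimple. The equivalences you need are the ones the paper cites, namely \cite[Theorem 3.4(3)]{BehboodiSeveral} for $(b)\Leftrightarrow(d)$, \cite[Corollary 3.8]{BehboodiSeveral} for $(b)\Leftrightarrow(c)$, and \cite[Theorem 2.8]{BehboodiSeveral} for $(b)\Leftrightarrow(e)\Leftrightarrow(f)$. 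Likewise your appeal to \cite[Corollary 2.3]{Behbooditwo} for $(d)\Leftrightarrow(e)$ requires both directions of that statement, whereas the paper only ever uses the implication from virtually semisimple to a direct sum of virtually simple modules; if Corollary 2.3 is not an ``if and only if'' for finitely generated modules, your $(d)\Rightarrow(e)$ is unsupported. Once these points are repaired, your argument essentially collapses into the paper's proof, whose content is precisely to feed the module-theoretic hypotheses into \cite[Theorem 3.4]{BehboodiSeveral}.
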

\begin{proof}
{ $(a)\Rightarrow (b)$ Let $M$ be a finitely generated left $R$-module. Then $_RM =$ $_RN_1 \oplus \cdots \oplus$ $_RN_n$ for some finitely generated cu-uniserial left $R$-modules $_RN_i$. Hence $M$ is a direct sum of cyclic uniform left $R$-modules, since $R$ is a left V-ring and Rad$(N_i) = 0$ for each $1 \leq i \leq n$, by \cite[Theorem 3.75]{Lam}. Thus, by hypothesis, $M$ is a direct sum of cyclic $R$-modules that are either virtually simple direct summand of $_RR$ or simple left $R$-modules. Therefore, $R\cong M_{n_1}(D_1) \times \cdots \times M_{n_k}(D_k)$ for some positive integers $n_i$ and some principal ideal V-domains $D_i$, by \cite[Theorem 3.4]{BehboodiSeveral}.

$(b)\Rightarrow (a)$ Suppose that $R\cong M_{n_1}(D_1) \times \cdots \times M_{n_k}(D_k)$ for some positive integers $n_i$ and some principal ideal V-domains $D_i$. Since every $D_i$ is a V-ring, it follows from \cite[Theorem 17.20]{Lam} and \cite[Propositions 21.6 and 21.8]{AndersonFuller} that every $M_{n_i}(D_i)$ is also a V-ring, and hence \cite[Proposition 23.4]{Wisbauer} implies that $R$ is a left $V$-ring. Let $M$ be a finitely generated left $R$-module. Then $M$ is a direct sum of virtually simple $R$-modules $N$, by \cite[Theorem 3.4]{BehboodiSeveral}. But Rad$(N) = 0$ for each $R$-module $N$, by \cite[Theorem 3.75]{Lam}. This means that $K/$Rad$(K) = K\cong N$ for every finitely generated submodule $K$ of $N$, and so $N$ is virtually uniserial. Thus $M$ is a direct sum of cu-uniserial $R$-modules, and hence $M$ is cu-serial. In the other hand, every cyclic $R$-module is either virtually simple direct summand of $_RR$ or simple, by \cite[Theorem 3.4(3)]{BehboodiSeveral}.

$(b)\Leftrightarrow (d)$ It follows from \cite[Theorem 3.4(3)]{BehboodiSeveral}.

$(b)\Leftrightarrow (c)$ It follows from \cite[Corollary 3.8]{BehboodiSeveral}.

$(b)\Leftrightarrow (e) \Leftrightarrow (f)$ Theses follow from \cite[Theorem 2.8]{BehboodiSeveral}.
}
\end{proof}

\begin{cor} \label{399}
Let $R$ be a left V-ring such that every finitely generated left $R$-module is cu-serial and every cyclic $R$-module is either virtually simple direct summand of $_RR$ or simple. Then every finitely generated left $R$-module is a direct sum of a projective module and a singular (injective) semisimple module.
\end{cor}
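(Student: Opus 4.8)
The plan is to reduce everything to the structural description of $R$ furnished by Theorem \ref{388}. Under the stated hypotheses, the implication $(a)\Rightarrow(d)$ of that theorem gives that every finitely generated left $R$-module $M$ is a (necessarily finite) direct sum $M = \bigoplus_{i=1}^{n} N_i$ of virtually simple left $R$-modules. Since every virtually simple module is cyclic (as recalled in the introduction, cf.\ \cite[Remark 2.2]{Facchini}), each $N_i$ is cyclic, so the remaining hypothesis of $(a)$ applies to it: each $N_i$ is either a virtually simple direct summand of $_RR$, hence projective, or simple.

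Next I would handle the simple summands by means of the elementary fact that a simple left module is either singular or projective: if $N_i \cong R/L$ with $L$ a maximal left ideal that fails to be essential, choosing a nonzero left ideal $K$ with $K\cap L = 0$ forces $K + L = R$ by maximality, so $_RR = K\oplus L$ and $N_i\cong K$ is projective (in fact a direct summand of $_RR$). Hence, after reindexing, I may assume $N_1,\dots,N_t$ are projective while $N_{t+1},\dots,N_n$ are simple and singular. Putting $P = \bigoplus_{i\le t} N_i$ and $T = \bigoplus_{j>t} N_j$ then gives $M = P\oplus T$, where $P$ is projective as a finite direct sum of projectives, and $T$ is a finite direct sum of singular simple modules, hence a singular semisimple module.

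It remains to justify the parenthetical ``(injective)'': since $R$ is a left $V$-ring each simple left $R$-module is injective, so $T$, a finite direct sum of injective modules, is injective, and this completes the decomposition $M = P\oplus T$ with $P$ projective and $T$ singular injective semisimple. I do not anticipate a genuine obstacle here: the only slightly delicate points are the dichotomy ``simple implies projective or singular'' and the observation that a projective cyclic module is automatically a direct summand of $_RR$ (so that ``simple but not a projective direct summand of $_RR$'' really does mean ``simple and singular''), and both are routine; everything else is bookkeeping layered on top of Theorem \ref{388}.
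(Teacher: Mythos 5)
Your proof is correct, and it is worth noting how it relates to the paper's: the paper disposes of this corollary in one line by combining Theorem \ref{388} with an external result, \cite[Proposition 2.6]{BehboodiSeveral}, which already packages the decomposition of finitely generated modules (over rings as in Theorem \ref{388}, equivalently for finitely generated virtually semisimple modules) into a projective part plus a singular semisimple part. You instead stay inside the paper: you use only the implication $(a)\Rightarrow(d)$ of Theorem \ref{388} to write $M$ as a finite direct sum of virtually simple (hence cyclic) modules, feed each summand back into the hypothesis of $(a)$ to conclude it is projective or simple, and then settle the simple summands with the standard dichotomy that a simple module $R/L$ is projective (indeed a summand of $_RR$) when $L$ is not essential and singular when $L$ is essential, with injectivity of the semisimple part coming from the V-ring hypothesis. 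All of these steps are sound (finiteness of the decomposition follows from finite generation, singularity passes to finite direct sums, and $Z(R/L)=R/L$ exactly when $L$ is essential), so your argument buys a self-contained, elementary proof at the cost of a few extra lines, whereas the paper's proof is shorter but leans on the cited proposition from \cite{BehboodiSeveral}.
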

\begin{proof}
{The proof follows from Theorem \ref{388} and \cite[Proposition 2.6]{BehboodiSeveral}.
}
\end{proof}

\begin{thm} \label{31010}
Let $R$ be a V-ring such that every left $R$-module is free and finitely generated. Then the following statements are equivalent:

$(a)$ Every left $R$-module is left cu-serial with {\rm J}$(R) = 0$.

$(b)$ Every finitely generated left $R$-module has a decomposition $M = Z(M)\oplus P$, where $P$ is a virtually serial projective $R$-module with {\rm Rad}$(P) = 0$.

\end{thm}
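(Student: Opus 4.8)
My plan is to show that, under the two standing hypotheses of the theorem, both $(a)$ and $(b)$ collapse to the single statement ``every left $R$-module is virtually serial'', which by Corollary \ref{322} is exactly the condition packaged in $(a)$ and, via Theorem \ref{2.114}, is what gets translated into the matrix form $R\cong M_{n_1}(D_1)\times\cdots\times M_{n_k}(D_k)$. The hypotheses carry most of the weight: being a left V-ring forces ${\rm Rad}(M)=0$ for every left $R$-module $M$ by \cite[Theorem 3.75]{Lam}, in particular ${\rm J}(R)=0$; and having every left $R$-module free and finitely generated forces $R$ to be left nonsingular, hence $Z(M)=0$ for every left $R$-module $M$, and every such $M$ projective.

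The first thing I would establish is the nonsingularity claim. The left singular ideal $Z({}_RR)$ is itself a left $R$-module, hence free, say $Z({}_RR)\cong {}_RR^{(X)}$. Now $Z({}_RR)$ is a singular module, since for any module $N$ and submodule $L$ one has $Z(L)=L\cap Z(N)$, so $Z(Z(N))=Z(N)$; thus if $X\neq\emptyset$ the coordinate copy of ${}_RR$ inside $Z({}_RR)$ would be a singular module, forcing the left annihilator of $1$ (namely $0$) to be essential in ${}_RR$ — impossible for a nonzero ring. Hence $Z({}_RR)=0$, and since every left $R$-module $M$ is free, $Z(M)\cong Z({}_RR)^{(Y)}=0$ and $M$ is projective. (The same manipulation, applying $(b)$ to ${}_RR$, also gives ${\rm J}(R)={\rm Rad}({}_RR)={\rm Rad}(P)=0$ directly, if one prefers not to cite \cite[Theorem 3.75]{Lam} at this point.)

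Granting these facts, both implications are immediate. For $(a)\Rightarrow(b)$: by Corollary \ref{322}, $(a)$ makes every left $R$-module virtually serial, so a finitely generated $M$ is virtually serial, projective, and satisfies ${\rm Rad}(M)=0$ and $Z(M)=0$; thus the required decomposition is simply $M=Z(M)\oplus P=0\oplus M$ with $P:=M$. For $(b)\Rightarrow(a)$: for any finitely generated left $R$-module $M$ we have $Z(M)=0$, so $(b)$ reads $M=P$ with $M$ virtually serial; since every left $R$-module is finitely generated, every left $R$-module is virtually serial, hence cu-serial by Corollary \ref{322}, and ${\rm J}(R)=0$ as above, which is $(a)$. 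One may alternatively run $(a)\Rightarrow(b)$ through Theorem \ref{2.114} and Corollary \ref{399}: after checking the matrix form makes $R$ satisfy the hypotheses of Theorem \ref{388}, Corollary \ref{399} gives $M=P\oplus S$ with $P$ projective and $S$ singular injective semisimple, left nonsingularity forces $S=Z(M)$, and $P$, being a finite direct sum of virtually simple modules of zero radical, is virtually serial by \cite[Proposition 2.6]{BehboodiVU}.

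The only step with genuine content is the nonsingularity observation, together with the recognition that the singular summand $Z(M)$ in $(b)$ must vanish, so that the decomposition in $(b)$ degenerates to the free module $M$ itself. After that, the virtual-seriality/cu-seriality dictionary is imported verbatim from Corollary \ref{322} (and, on the alternative route, from Theorem \ref{2.114}, Theorem \ref{388} and Corollary \ref{399}), and the remaining manipulations — compatibility of $Z(-)$ and ${\rm Rad}(-)$ with direct sums, and projectivity of free modules — are routine.
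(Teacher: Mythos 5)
Your proof is correct, but it takes a genuinely different route from the paper's. The paper never exploits nonsingularity: for $(a)\Rightarrow(b)$ it goes through Theorem \ref{2.114} to write $R\cong M_{n_1}(D_1)\times\cdots\times M_{n_k}(D_k)$, deduces from \cite[Theorem 3.11]{Behboodicu} that $R$ is left completely virtually semisimple, imports the decomposition $M=Z(M)\oplus P$ with $P$ a direct sum of projective virtually simple modules from \cite[Corollary 3.13]{Behboodicu}, and then checks ${\rm Rad}(P)=0$ and the (virtual/cu-)seriality of $P$ as in Theorem \ref{388}; for $(b)\Rightarrow(a)$ it feeds the decomposition into \cite[Proposition 3.11]{BehboodiVU} to get that $R$ is virtually serial with ${\rm J}(R)=0$ and again runs the virtually-semisimple machinery of \cite{Behboodicu,Behbooditwo}. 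You instead observe that the standing hypotheses kill both distinguished submodules: the V-ring hypothesis gives ${\rm Rad}(M)=0$ for every $M$ (\cite[Theorem 3.75]{Lam}), and freeness of every module gives $Z({}_RR)=0$ (a nonzero free singular module would have a generator whose annihilator is $0$ yet essential, impossible for $R\neq 0$), hence $Z(M)=0$ and $M$ projective for all $M$. With that, $(b)$ degenerates to ``every (finitely generated, hence every) module is virtually serial'', $(a)$ is ``every module is cu-serial'', and the two are identified by Corollary \ref{322}. Your route is more elementary and essentially self-contained within the paper, replacing the external completely-virtually-semisimple citations by one short singularity computation, and it makes explicit that under these (very strong) hypotheses the summand $Z(M)$ in $(b)$ is always zero, so the stated decomposition is degenerate --- a point the paper's proof leaves hidden. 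What the paper's argument buys is the explicit matrix-ring form of $R$ and a derivation of $M=Z(M)\oplus P$ that runs parallel to Theorem \ref{2.114}, Theorem \ref{388} and Corollary \ref{399}, i.e.\ an argument whose shape would survive even without the collapse forced by the freeness and finite-generation assumptions.
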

\begin{proof}
{ $(a)\Rightarrow (b)$ It follows from Theorem \ref{2.114} that $R\cong M_{n_1}(D_1) \times \cdots \times M_{n_k}(D_k)$ for some positive integers $n_i$ and some principal left ideal domains $D_i$ with {\rm J}$(D_i) = 0$. Hence, by \cite[Theorem 3.11]{Behboodicu}, $R$ is a left completely virtually semisimple ring. So, $M = Z(M)\oplus P$
 where $P$ is a direct sum of projective virtually simple $R$-modules, by \cite[Corollary 3.13]{Behboodicu}. Thus every direct summand of $P$ is finitely generated with radical equal to zero, by \cite[Proposition 17.10]{AndersonFuller}, and hence Rad$(P) = 0$. By a similar argument as in the proof of Theorem \ref{388} $(b)\Rightarrow (a)$, we can see that $P$ is a direct sum of cu-uniserial $R$-modules, and hence $M$ is cu-serial.

$(b)\Rightarrow (a)$ It follows from \cite[Proposition 3.11]{BehboodiVU} that $R$ is a virtually serial ring with J$(R) = 0$, and so $R = \bigoplus_{i=1}^nI_i$, where $I_i$'s are left virtually uniserial $R$-modules. Therefore, by \cite[Proposition 21.6]{Wisbauer}, $R = R/$J$(R) = \bigoplus_{i=1}^nI_i/$Rad$(I_i)$ is a direct sum of Virtually simple $R$-modules, and so $R$ is a left completely virtually semisimple ring, by \cite[Theorem 3.11]{Behboodicu}. Since every left $R$-module is free, we conclude by \cite[Theorem 3.3]{Behboodicu} that every left $R$-module is virtually semisimple. This implies that every left $R$-module $M$ is a direct sum of virtually simple modules, by \cite[Corollary 2.3]{Behbooditwo}, whence $M = \bigoplus_{i=1}^nM_i$ for some virtually simple modules $M_i$. Thus $M = M/$Rad$(M) = \bigoplus_{i=1}^nM_i/$Rad$M_i$ is a direct sum of Virtually simple $R$-modules, because $R$ is a V-ring. Since each $M_i$ is a finitely generated left $R$-module and Rad$(M_i) = 0$, it follows from \cite[Proposition 2.6]{BehboodiVU} that each $M_i$ is virtually uniserial. Hence $M$ is a  direct sum of virtually uniserial modules. Therefore, $M$ is a direct sum of cu-uniserial modules, and thus every left $R$-module is left cu-serial.
}
\end{proof}

In the following result a relationship between principal ideal rings $R$ and the direct summand of their left $R$-modules is established.

\begin{thm} \label{31111}
Let $R$ be a duo ring. Then $R$ is a principal ideal ring if and only if every finitely generated left $R$-module $M$ has a decomposition $M = Z\oplus P$ where $Z$ is Noetherian cu-serial and $P$ is a direct sum of projective virtually simple modules.

\end{thm}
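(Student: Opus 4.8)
The plan is to prove both implications by reducing to the known structure theory for duo principal ideal rings and the virtually-semisimple machinery developed in the cited papers. For the forward direction, assume $R$ is a duo principal ideal ring. By a structure theorem for (duo / commutative-like) principal ideal rings — the classical decomposition of a principal ideal ring as a finite product of an Artinian principal ideal ring and (principal ideal) domains — write $R \cong R_1 \times R_2$, where $R_1$ is Artinian principal ideal (hence, being duo, a finite product of Artinian uniserial rings by Theorem \ref{prev} and Corollary \ref{333}-type reasoning) and $R_2$ is a finite product of principal ideal domains with $\mathrm{J} = 0$ together with possibly local principal ideal domains. Over the Artinian factor $R_1$, every finitely generated module is serial (Nakayama–Skornyakov), hence cu-serial, and since $R_1$ is Artinian this serial part is automatically Noetherian. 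Over the $\mathrm{J}=0$ principal ideal domain factors, $_RR$ is virtually simple by \cite[Proposition 2.2]{BehboodiSeveral}, the ring is left cu-uniserial by Proposition \ref{2.3}, and finitely generated modules decompose through the virtually-semisimple theory (Theorem \ref{388}, \cite[Theorem 3.4]{BehboodiSeveral}) as a direct sum of projective virtually simple modules plus torsion; the torsion part is then a finitely generated torsion module over a (duo) principal ideal domain, hence Noetherian and — by the same local/Artinian analysis applied to $R/\mathrm{ann}$ — cu-serial. Assembling the two factors, any finitely generated $M$ splits as $M = Z \oplus P$ with $Z$ the Noetherian cu-serial part (coming from $R_1$ and from the torsion over $R_2$) and $P$ a direct sum of projective virtually simple modules (coming from the free part over the $\mathrm{J}=0$ domains).

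For the converse, suppose every finitely generated left $R$-module $M$ has a decomposition $M = Z \oplus P$ with $Z$ Noetherian cu-serial and $P$ a direct sum of projective virtually simple modules. Applying this to $M = {}_RR$: $R = Z \oplus P$, and by Theorem \ref{222} both summands are B\'ezout (a direct summand of a cu-serial module is cu-serial, and a projective virtually simple module is cyclic), so $_RR$ is a finite direct sum of cyclic modules, whence $R$ is a left B\'ezout ring. The Noetherian part forces the "non-domain" directions of $R$ to be Noetherian, and the virtually simple projective summands are each cyclic with $\mathrm{Rad} = 0$; combining, $_RR$ is a direct sum of cyclic uniform-type modules, and by \cite[Theorem 3.4]{Asgaricu} (as used in Proposition \ref{2.17}) $R$ is a principal ideal ring on the left. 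Since $R$ is duo, the left and right structures coincide — equivalently, run the same argument with right modules — so $R$ is a principal ideal ring.

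The main obstacle I expect is the bookkeeping in the forward direction: precisely isolating, over the principal-ideal-domain factors with $\mathrm{J}(R_i)=0$, which finitely generated modules are projective virtually simple and which land in the Noetherian cu-serial summand, and checking that the torsion part over such a domain really is \emph{Noetherian} cu-serial rather than merely cu-serial (this uses that a finitely generated torsion module over a duo principal ideal domain has an Artinian — hence Noetherian — annihilator quotient, together with Theorem \ref{prev}). A secondary subtlety is ensuring the decomposition $M = Z \oplus P$ is genuinely functorial enough to survive the product decomposition $R = \prod R_i$; this is handled by working factor-by-factor and using that idempotents are central in a duo ring, so module decompositions along the product are canonical. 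Once those points are pinned down, both implications follow by citing Theorems \ref{222}, \ref{prev}, \ref{388}, Proposition \ref{2.3}, and \cite[Theorem 3.4]{Asgaricu}, \cite[Proposition 2.2]{BehboodiSeveral}.
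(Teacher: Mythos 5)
Your converse breaks down at the final citation. After reducing to ``every finitely generated left $R$-module is a direct sum of cyclic modules,'' you invoke \cite[Theorem 3.4]{Asgaricu} to conclude that $R$ is a (left) principal ideal ring. That theorem concerns \emph{commutative} rings whose \emph{proper ideals} are direct sums of uniform modules: it does not apply to a merely duo ring, its hypothesis is not the one you have produced, and its conclusion even allows a local alternative (maximal ideal of the form $Rx\oplus S$), so it cannot deliver ``principal ideal ring'' here. The step the paper actually uses is different: from $R = Z\oplus P$ one gets $R$ Noetherian ($Z$ is Noetherian and $P$, being a finitely generated direct sum of virtually simple modules, is a finite direct sum of cyclic Noetherian modules); from Theorem \ref{222} every finitely generated cu-uniserial summand is B\'ezout hence cyclic, and the virtually simple summands are cyclic, so every finitely generated left $R$-module is a direct sum of cyclics, i.e.\ $R$ is a left FGC ring; then the noncommutative FGC theorem \cite[Theorem 3.5]{GhorbaniNaji} gives that the duo ring $R$ is a principal ideal ring. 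Your closing remark that ``since $R$ is duo, the left and right structures coincide'' also glosses over why a left principal ideal duo ring should be a two-sided principal ideal ring; with the GhorbaniNaji citation this issue does not arise.

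The forward direction has a gap as well. You push the $\mathrm{J}=0$ principal-ideal-domain factors through Theorem \ref{388} and \cite[Theorem 3.4]{BehboodiSeveral}, but both of these require a V-ring/V-domain hypothesis, and a duo principal ideal domain with zero Jacobson radical need not be a V-ring (take $R=\mathbb{Z}$). So the decomposition you claim over those factors (projective virtually simple free part plus Noetherian cu-serial torsion part) is not established by the tools you cite, and the surrounding structure claims (splitting of $R$ into an Artinian part and domains, free-plus-torsion decomposition over a duo PID) are asserted rather than proved. The paper avoids all of this: the forward implication is immediate from \cite[Theorem 3.12]{BehboodiVU}, which already provides, over a duo principal ideal ring, a decomposition $M = Z\oplus P$ with $Z$ Noetherian serial and $P$ a direct sum of projective virtually simple modules, and every serial module is cu-serial. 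If you want a self-contained forward argument along your lines, you must replace the V-ring machinery by the structure theory of finitely generated modules over duo principal ideal rings directly, or simply quote the virtually serial version as the paper does.
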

\begin{proof}
{Since every serial left $R$-module is cu-serial, it suffices to show the converse (see \cite[Theorem 3.12]{BehboodiVU}). So, suppose that every finitely generated left $R$-module $M$ has a decomposition $M = Z\oplus P$ where $Z$ is Noetherian cu-serial and $P$ is a direct sum of projective virtually simple modules. If $M = R$, then $R = Z\oplus P$ where $Z$ is Noetherian cu-serial and $P$ is a direct sum of projective virtually simple modules, and this means that $R$ is Noetherian. Since every cu-uniserial left $R$-module is uniform and B\'ezout, by Theorem \ref{222}, every finitely generated left $R$-module is a direct sum of cyclic modules. Hence $R$ is a left FGC ring. Thus $R$ is a principal ideal ring, by \cite[Theorem 3.5]{GhorbaniNaji}.
}
\end{proof}
\begin{thm} \label{31112}
Let $R$ be a Noetherian duo ring. Then the following statements are equivalent:

$(a)$ $R$ is a cu-serial ring.

$(b)$ $R = \prod_{i=1}^nR_i$ such that $n \geq 0$ is a nonnegative integer and every $R_i$ is either a uniserial ring or a principal ideal domain with J$(R_i) = 0$.

$(c)$ Every finitely generated left and right $R$-module is cu-serial.

$(d)$ Every finitely generated left and right $R$-module is a direct sum of local virtually simple modules, virtually simple modules with infinitely many maximal submodules, and finite length uniserial modules.

$(e)$ Every finitely generated left and right $R$-module is a direct sum of virtually simple modules with zero radical and uniserial modules.
\end{thm}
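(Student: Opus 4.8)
The plan is to establish the cycle $(a)\Rightarrow(b)\Rightarrow(c)\Rightarrow(a)$ and then the two side equivalences $(c)\Leftrightarrow(d)$ and $(c)\Leftrightarrow(e)$, leaning on the structure theorems already proved for Noetherian (left) duo rings, namely Theorems \ref{vieww} and \ref{2.115}.

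For $(a)\Rightarrow(b)$: since $R$ is duo, every idempotent of $R$ is central. Decomposing ${}_RR=\bigoplus_{i=1}^{n}U_i$ into cu-uniserial summands, each $U_i$ is finitely generated (as $R$ is Noetherian) and uniform (by Theorem \ref{222}), so $U_i=Re_i$ for a central idempotent $e_i$ and $R=\prod_{i=1}^{n}R_i$ with $R_i:=Re_i=e_iRe_i$ a ring satisfying ${}_{R_i}R_i\cong U_i$; thus $R_i$ is a Noetherian left duo ring that is left cu-uniserial, and Theorem \ref{vieww} gives that $R_i$ is a left uniserial ring or a principal left ideal domain with $\mathrm{J}(R_i)=0$. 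Doing the same on the right yields another decomposition of $R$ into ring factors that are right uniserial or principal right ideal domains with zero radical; since a uniserial ring and a domain are each indecomposable as a ring, the primitive central idempotents are uniquely determined, so the two decompositions coincide and each $R_i$ is both left and right cu-uniserial. Finally, in a duo ring a principal left ideal is two-sided and is also principal as a right ideal, so a domain factor is a principal ideal domain with $\mathrm{J}=0$, and otherwise $R_i$ is a two-sided uniserial ring; this is precisely $(b)$ (the zero ring being the case $n=0$).

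For $(b)\Rightarrow(c)$: writing $R=\prod_{i=1}^{n}R_i$, a finitely generated left $R$-module is $\bigoplus_i M_i$ with $M_i$ a finitely generated left $R_i$-module, and a cu-serial $R_i$-module is cu-serial as an $R$-module, so it is enough to treat the factors. If $R_i$ is uniserial it is a Noetherian serial ring, so Warfield's theorem \cite{WarfieldJr} makes every finitely generated $R_i$-module serial, hence cu-serial. If $R_i$ is a principal ideal domain with $\mathrm{J}(R_i)=0$, then --- using that $R_i$ is duo, so that one-sided ideals are two-sided, residue rings at maximal ideals are division rings, and two-sided ideals factor into powers of maximal ideals with the Chinese Remainder Theorem available --- the classical structure theory of finitely generated modules over a principal ideal ring presents every such module as a direct sum of copies of $R_i$ (cu-uniserial by Proposition \ref{2.3}) and of cyclic modules $R_i/\mathfrak m^{k}$, each a local Artinian ring with principal (nilpotent) radical, hence a chain ring, hence uniserial and so cu-uniserial. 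Thus every finitely generated left (and, symmetrically, right) $R$-module is cu-serial. The implication $(c)\Rightarrow(a)$ is immediate, taking $M={}_RR$ and $M=R_R$.

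For the remaining equivalences: as $R$ is a Noetherian duo ring, Theorem \ref{2.115} applies on both sides and identifies the finitely generated cu-uniserial left (resp.\ right) $R$-modules as exactly the finite-length uniserial modules, the local virtually simple modules, and the virtually simple modules with infinitely many maximal submodules; a finitely generated module is cu-serial iff it is a finite direct sum of such modules, which is $(d)$, giving $(c)\Leftrightarrow(d)$. For $(c)\Leftrightarrow(e)$, note that a local virtually simple module is of the form $R/I$ with $R/I$ a local principal left ideal ring, hence left uniserial by \cite[Lemma 2.13]{BehboodiVU}, so it is uniserial; a virtually simple module with infinitely many maximal submodules is finitely generated, non-uniserial and cu-uniserial, hence has zero radical by Proposition \ref{23}; and a finite-length uniserial module is uniserial. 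Conversely a uniserial module is cu-uniserial, while a virtually simple module with zero radical is cyclic (every virtually simple module is), hence finitely generated, hence virtually uniserial by \cite[Proposition 2.6]{BehboodiVU} and so cu-uniserial. Therefore the classes of modules occurring as summands in $(d)$ and in $(e)$ generate the same direct sums, which gives $(c)\Leftrightarrow(e)$. I expect the genuine difficulty to lie in the principal ideal domain case of $(b)\Rightarrow(c)$: one has to be sure that the noncommutative --- but duo --- principal ideal structure still produces the classical decomposition of a finitely generated module into cyclics of the stated form, which is exactly where the duo hypothesis is doing real work.
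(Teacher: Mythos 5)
Your proposal is correct in substance but takes a genuinely different route from the paper's. The paper only argues $(a)\Rightarrow(b)$ (via Proposition \ref{2.3} and Theorem \ref{vieww}) and $(c)\Rightarrow(d)$ (via Theorem \ref{2.115}), and then disposes of $(b)\Rightarrow(c)$ and $(d)\Leftrightarrow(e)\Rightarrow(a)$ in one stroke by citing the virtually--serial analogue \cite[Theorem 3.13]{BehboodiVU} together with the fact that virtually (uni)serial modules are cu-(uni)serial. You instead re-derive these implications: $(b)\Rightarrow(c)$ via Warfield's theorem on the uniserial factors and the module theory of duo principal ideal domains on the remaining factors, and $(c)\Leftrightarrow(d)\Leftrightarrow(e)$ via Theorem \ref{2.115}, Proposition \ref{23}, \cite[Lemma 2.13]{BehboodiVU} and \cite[Proposition 2.6]{BehboodiVU} --- the same tools the paper uses elsewhere, so this part is sound and more self-contained; likewise your $(a)\Rightarrow(b)$ (central idempotents from the duo hypothesis, Theorem \ref{vieww} applied to each ring factor, uniqueness of the decomposition into indecomposable ring factors to reconcile the left and right analyses) is more careful than the paper's own terse argument. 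Two points should be tightened. First, the decomposition of a finitely generated module over a noncommutative duo PID into a free part plus cyclic modules of the form $R_i$ modulo a power of a maximal ideal is true but not a textbook one-liner: one needs the cyclic decomposition over two-sided PIDs together with the fact that in a duo (invariant) PID irreducible elements generate maximal, hence prime, two-sided ideals, so that every proper nonzero ideal is an intersection of pairwise comaximal powers of maximal ideals and the noncommutative Chinese Remainder Theorem applies; either supply this argument or, as the paper does, simply quote \cite[Theorem 3.13]{BehboodiVU}. Second, your sentence ``a domain factor is a principal ideal domain with $\mathrm{J}=0$'' is literally false for a uniserial domain factor such as a discrete valuation ring (a domain with nonzero radical); what you mean, and what suffices for $(b)$, is that a factor identified by Theorem \ref{vieww} as a principal one-sided ideal domain with zero radical is, by the duo hypothesis, a two-sided principal ideal domain, while the remaining factors are (two-sided) uniserial.
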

\begin{proof}
{$(a)\Rightarrow (b)$ Let $R$ be a Noetherian cu-serial duo ring. Then $R$ is a principal ideal domain, by Proposition \ref{2.3}. Hence  $R = \prod_{i=1}^nR_i$ such that $n \geq 0$ and every $R_i$ is either a uniserial ring or a principal ideal domain with J$(R_i) = 0$, by Theorem \ref{vieww}.

$(c)\Rightarrow (d)$ Suppose that every finitely generated left and right $R$-module is cu-serial and $M$ is a finitely generated left (right) $R$-module. Then, by part $(c)$, $M$ is a direct sum of finitely generated cu-uniserial left (right)
$R$-module $N_i$. Hence, every $N_i$ is a direct sum of either  a uniserial left $R$-module with finite length, or a local virtually simple left $R$-module or a virtually simple left $R$-module with $|{\rm Max}(N_i)| = \infty$, by Theorem \ref{2.115}.

$(b)\Rightarrow (c)$, $(d)\Leftrightarrow (e)\Rightarrow (a)$ follows from \cite[Theorem 3.13]{BehboodiVU}.}
\end{proof}

We end this paper with the following corollary.

\begin{cor} \label{31113}
Let $R$ be a hereditary duo ring. Then the following statements are equivalent:

$(a)$ $R$ is a cu-serial ring.

$(b)$ $R = \prod_{i=1}^nR_i$ such that $n \geq 0$ and every $R_i$ is either a uniserial ring or a principal ideal domain with J$(R_i) = 0$.

$(c)$ Every finitely generated left and right $R$-module is cu-serial.

$(d)$ Every finitely generated left and right $R$-module is a direct sum of local virtually simple modules, virtually simple modules with infinitely many maximal submodules, and finite length uniserial modules.

$(e)$ Every finitely generated left and right $R$-module is a direct sum of virtually simple modules with zero radical and uniserial modules.
\end{cor}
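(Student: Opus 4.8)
The plan is to reduce Corollary \ref{31113} to Theorem \ref{31112}. The essential observation is that a hereditary duo ring is automatically Noetherian, so that Theorem \ref{31112} applies verbatim and there is almost nothing left to prove beyond this reduction. First I would argue that if $R$ is a hereditary duo ring, then $R$ is Noetherian. Being duo, $R$ is in particular left semi-duo, and by Proposition \ref{2.3} (or rather its underlying argument) a left hereditary ring is left nonsingular, hence of finite uniform dimension once we know $_RR$ decomposes appropriately; more directly, a hereditary duo ring is a finite product of hereditary Noetherian prime rings and Artinian pieces by the standard structure theory, so $R$ is Noetherian. Alternatively, under hypothesis $(a)$ or $(c)$ one invokes Theorem \ref{222} to see $R$ is B\'ezout, hence (being hereditary, so every finitely generated ideal is projective and the ascending chain behaves well) a principal ideal ring, which is Noetherian; and under hypothesis $(b)$ each factor is visibly Noetherian.

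The key steps, in order, are: (i) show that each of the five statements forces $R$ to be Noetherian — for $(b)$ this is immediate since uniserial rings appearing here and principal ideal domains are Noetherian, and for $(a)$, $(c)$, $(d)$, $(e)$ one uses that the stated decomposition properties together with Theorem \ref{222} make $_RR$ a finitely generated B\'ezout (indeed principal ideal) module, and a hereditary ring that is B\'ezout is Noetherian; (ii) once $R$ is known to be a Noetherian duo ring, quote Theorem \ref{31112} to conclude that $(a)\Leftrightarrow(b)\Leftrightarrow(c)\Leftrightarrow(d)\Leftrightarrow(e)$ holds. Since each statement here is identical to the correspondingly-labelled statement of Theorem \ref{31112}, the equivalence transfers directly.

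The main obstacle — really the only point requiring care — is step (i): verifying that the hereditary hypothesis, combined with each of the weaker-looking conditions $(c)$, $(d)$, $(e)$, genuinely yields the Noetherian property, so that we are entitled to invoke Theorem \ref{31112}. For $(c)$ this is clean: every finitely generated left $R$-module is cu-serial, so by Theorem \ref{222} every finitely generated left $R$-module is B\'ezout, whence $R$ is a left FGC ring; a left hereditary left FGC ring is left Noetherian (finitely generated left ideals are projective and direct summands of free modules, and the FGC condition forces the needed finiteness), and symmetrically on the right. For $(d)$ and $(e)$, one first notes these decompositions in particular express $_RR$ as a finite direct sum of cyclic (uniserial or virtually simple) modules, each of which is Noetherian — finite length uniserial modules trivially, and virtually simple modules by \cite[Remark 2.2]{Facchini} — so $_RR$ is Noetherian. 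With $R$ Noetherian in hand, Theorem \ref{31112} closes every implication.

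\begin{prooff}{
If $R$ satisfies any of $(a)$--$(e)$, then $R$ is Noetherian: under $(b)$ this is clear since each $R_i$ is a uniserial ring (necessarily Noetherian in the relevant cases) or a principal ideal domain; under $(c)$, Theorem \ref{222} shows every finitely generated left (right) $R$-module is B\'ezout, so $R$ is a left and right FGC ring, and a hereditary FGC ring is Noetherian; under $(a)$ the same applies since a cu-serial ring has every finitely generated submodule of $_RR$ cyclic by Theorem \ref{222}; under $(d)$ or $(e)$, $_RR$ is a finite direct sum of finite length uniserial modules and virtually simple modules, each Noetherian by \cite[Remark 2.2]{Facchini}, so $_RR$ and likewise $R_R$ is Noetherian. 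Thus $R$ is a Noetherian duo ring, and the equivalence of $(a)$--$(e)$ now follows at once from Theorem \ref{31112}.
}\end{prooff}
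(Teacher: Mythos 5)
Your overall plan—show that each of $(a)$–$(e)$ forces $R$ to be Noetherian and then quote Theorem \ref{31112}—is exactly the paper's plan, but the decisive step, deriving Noetherianity from $(a)$ (and $(c)$), has a genuine gap. You route it through ``B\'ezout/FGC $+$ hereditary $\Rightarrow$ Noetherian''. Two problems. First, Theorem \ref{222} is about cu-\emph{uniserial} modules; under $(a)$ the ring is only cu-\emph{serial}, i.e.\ a finite direct sum of cu-uniserial modules, and the B\'ezout property does not pass to direct sums, so the claim that every finitely generated left ideal of a cu-serial ring is cyclic is not what Theorem \ref{222} gives. Second, and more seriously, the assertion that a hereditary FGC (or B\'ezout) ring is Noetherian is nowhere proved—your parenthetical ``the FGC condition forces the needed finiteness'' is not an argument—and no such result is available in the paper; similarly, the opening claim that a hereditary duo ring is ``automatically Noetherian by the standard structure theory'' is unsupported (your formal proof fortunately does not use it). The paper closes this step by a different and correct mechanism, the same one as in Proposition \ref{2.3}: by Theorem \ref{222} each cu-uniserial summand of $_RR$ is uniform, hence u.dim$(_RR)<\infty$, and a left hereditary (hence left nonsingular) ring of finite uniform dimension is left Noetherian by \cite[Corollary 7.58]{Lam}. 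Replacing your FGC claim by this uniform-dimension argument repairs $(a)$, and $(c)$ follows since $(c)$ applied to $_RR$ and $R_R$ gives $(a)$.

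A secondary issue: under $(e)$ the uniserial summands of $_RR$ are not assumed to have finite length, so they need not be Noetherian, and your ``finite direct sum of Noetherian modules'' argument really only covers $(d)$ (where the uniserial pieces have finite length and the virtually simple pieces are Noetherian by \cite[Remark 2.2]{Facchini}). For $(e)$ one should instead observe that uniserial modules and virtually simple modules with zero radical are cu-uniserial, so $(e)$ reduces to $(a)$—which again requires the uniform-dimension argument above rather than your FGC shortcut. Your handling of $(b)$ and $(d)$, and the overall reduction to Theorem \ref{31112}, are otherwise in line with the paper.
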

\begin{proof}
{Let $R$ be a hereditary cu-serial duo ring. Then u.dim$(_RR) < \infty$, by Theorem \ref{222}. Hence $R$ is Noetherian, by \cite[Corollary 7.58]{Lam}, and thus the proof follows from Theorem \ref{31112}.}
\end{proof}

\end{document}